\newcommand{\norm}[1]{\left\lVert#1\right\rVert}
\newcommand{\E}{\mathbb{E}}
\newcommand{\Bn}{\ensuremath{\mathcal{B}_n}\xspace}
\newcommand{\Sn}{\ensuremath{\mathcal{S}_n}\xspace}
\newcommand{\Cb}{\ensuremath{\mathcal{C}_\mathcal{B}}\xspace}
\newcommand{\Cs}{\ensuremath{\mathcal{C}_\mathcal{S}}\xspace}
\newcommand{\Yit}{\ensuremath{\tilde{Y_i}}\xspace}
\newcommand{\Zit}{\ensuremath{\tilde{Z_i}}\xspace}
\newcommand{\sigzm}{\ensuremath{{\sigma_Z^2}}_-\xspace}
\newcommand{\sigza}{\ensuremath{{\sigma_Z^2}}_+\xspace}
\newcommand{\sigytm}{\ensuremath{{\sigma_{\tilde{Y}}^2}_-}\xspace}
\newcommand{\sigyta}{\ensuremath{{\sigma_{\tilde{Y}}^2}_+}\xspace}
\newcommand{\muytppm}{\ensuremath{{\mu''_{\tilde{Y}}}_-}\xspace}
\newcommand{\muytppa}{\ensuremath{{\mu''_{\tilde{Y}}}_+}\xspace}
\newcommand{\muyppm}{\ensuremath{{\mu''_{Y}}_-}\xspace}
\newcommand{\muyppa}{\ensuremath{{\mu''_{Y}}_+}\xspace}
\newcommand{\tauh}{\hat{\tau}_h}
\newcommand{\tauy}{\tau_Y}
\newcommand{\muzpm}{\ensuremath{{\mu'_{Z}}_-}\xspace}
\newcommand{\muzpa}{\ensuremath{{\mu'_{Z}}_+}\xspace}
\newcommand{\ind}{\mathds{1}(x\geq 0)}
\DeclareMathOperator*{\argmin}{argmin}
\newcommand{\thetanhat}{\hat{\theta}_n}
\newcommand{\gammanhat}{\hat{\gamma}_n}
\newcommand{\thetancheck}{\check{\theta}_n}
\newcommand{\gammancheck}{\check{\gamma}_n}
\newcommand{\thetazerocheck}{\check{\theta}_0}
\newcommand{\gammazerocheck}{\check{\gamma}_0}
\newcommand{\richeck}{\check{r}_i}
\newcommand{\Kpn}{K_+^{(0)}}
\newcommand{\Kpe}{K_+^{(1)}}
\newcommand{\Kpz}{K_+^{(2)}}
\newcommand{\Kpd}{K_+^{(3)}}
\newcommand{\Kmn}{K_-^{(0)}}
\newcommand{\Kme}{K_-^{(1)}}
\newcommand{\Kmz}{K_-^{(2)}}
\newcommand{\Kmd}{K_-^{(3)}}
\newcommand{\Kn}{K^{(0)}}
\newcommand{\Ke}{K^{(1)}}
\newcommand{\Kz}{K^{(2)}}
\newcommand{\Kd}{K^{(3)}}
\newcommand{\Khfrac}{\mathbf{K}_h^{\frac{1}{2}}}
\newcommand{\Kh}{\mathbf{K}_h}
\newcommand{\Zbf}{\mathbf{Z}}
\newcommand{\Vbf}{\mathbf{V}}
\newcommand{\Ybf}{\mathbf{Y}}
\newcommand{\rbf}{\mathbf{r}}
\newcommand{\Ztbf}{\mathbf{Z}^\top}
\newcommand{\IR}{\mathbb{R}}
\newcommand{\IE}{\mathbb{E}}
\newcommand{\IP}{\mathbb{P}}
\newcommand{\Ind}{\mathbbm{1}}
\renewcommand{\bar}{\overline}
\renewcommand{\epsilon}{\varepsilon}
\renewcommand{\phi}{\varphi}
\theoremstyle{thmstyleone}
\newtheorem{theorem}{Theorem}
\newtheorem{proposition}{Proposition}
\newtheorem{lemma}{Lemma}[section]
\theoremstyle{thmstylethree}
\newtheorem{definition}{Definition}
\newtheorem{assumption}{Assumption}
\begin{document}

\title{Estimating Average Treatment Effects in Regression Discontinuity Designs with Covariates under Minimal Assumptions}

\author*[1]{\fnm{Patrick} \sur{Kramer}}\email{pk25wyda@studserv.uni-leipzig.de}

\author[1]{\fnm{Alexander} \sur{Kreiß}}\email{alexander.kreiss@math.uni-leipzig.de}

\affil[1]{\orgdiv{Institute of Mathematics}, \orgname{Leipzig University}, \orgaddress{\street{Augustusplatz 10}, \city{Leipzig}, \postcode{04109}, \country{Germany}}}

\abstract{We study regression discontinuity designs with the use of additional covariates for estimation of the average treatment effect. We provide a detailed proof of asymptotic normality of the covariate-adjusted estimator under minimal assumptions, which may serve as an accessible text to the mathematics behind regression discontinuity with covariates. In addition, this proof carries at least three benefits. First of all, it allows to draw straightforward consequences concerning the impact of the covariates on the bias and variance of the estimator. In fact, we can provide conditions under which the influence of the covariates on the bias vanishes. Moreover, we show that the variance in the covariate-adjusted case is never worse than in the case of the baseline estimator under a very general invertibility condition. Finally, our approach does not require the existence of potential outcomes, allowing for a sensitivity analysis in case confounding cannot be ruled out, e.g., by a manipulated forcing variable.}

\keywords{Regression discontinuity design, covariates, average treatment effect estimation, local polynomial regression}

\maketitle

\section{Introduction}\label{introduction}

Regression discontinuity design (RDD) is regularly used in economics, political science and other social sciences like biomedical science \citep{RDDWithCov}. The aim of RDD is to estimate causal effects of a treatment on some outcome in settings where the treatment is applied depending on some running variable (also called forcing variable or score value). Therefore, given data needs to contain a score value for each unit which decides whether the treatment is applied to that unit or not. In fact, if the score value lies above a certain cutoff point, the treatment is applied, otherwise it is not. Then, the effect of the treatment on the outcome can be estimated by comparing units that are close to the cutoff, but are on different sides of it. This concept, including its theoretical properties and applications, has already been studied by a variety of researchers.
\cite{RDFoundations} and \cite{RDExtensions} provide a two-part guide for the analysis and interpretation of regression discontinuity designs. Also the work of \cite{LeeDavid2010} can be seen as a guide for empirical researchers on RDDs, discussing basic theory, different methods of estimation and presenting examples to the reader. \cite{IMBENS2008615} summarize practical and theoretical issues that one encounters when implementing RD methods. \cite{ComparingInference} offer an empirical comparison of different inference approaches, such as parametric and non-parametric local polynomial methods, as well as local randomization methods. \cite{Hahn2001} present non-parametric estimation of the average treatment effect under minimal assumptions, as well as an interpretation of the Wald estimator as an RD estimator. Focusing on local linear regression, \cite{RDMSEBandwidth} provide an asymptotically optimal bandwidth choice under squared error loss. In \cite{Armstrong_2018}, confidence intervals for linear functionals of regression functions, e.g. the regression discontinuity parameter, are constructed, carrying optimal finite-sample properties and sharp efficiency bounds under normal errors with known variance. \cite{KinkRDD} address the issue of biased data-driven confidence intervals, presenting a more robust confidence interval estimator for average treatment effects in different RDD settings.

In practice, there is often more data than only the score value and the outcome. For this reason, many researchers want to incorporate additional data into the analysis of the causal effect. Indeed, this can be done by including additional covariates linearly in a local linear regression around the cutoff as suggested by \cite{RDDWithCov}. In this paper, they provide a formal study of the impact of covariate adjustments on estimation and inference in RD designs. In order to account for misspecification in finite samples \citep{KinkRDD}, they established a bias-corrected version of the covariate-adjusted estimator. In addition to that, smoothness assumptions that are necessary to obtain a consistent estimator that delivers reliable results are given. Under this imposed regularity conditions, they showed that the bias-corrected covariate-adjusted estimator is asymptotically normally distributed.

In this paper we continue the study of asymptotic normality of the regression discontinuity estimator using covariates. More precisely, we simplify the proof strategy of \cite{RDWithPotManyCov} and use that in typical regression discontinuity designs using locally linear estimation, one estimates
$$\tau_0:=\argmin_{\tau}\min_{a,b,c\in\IR,\gamma\in\IR^p}\IE\left(\left(Y_i-a-\tau T_i-bX_i-cT_iX_i-\gamma^TZ_i\right)^2\right),$$
where $X_i$ denotes the running variable, $T_i:=\Ind(X_i\geq c)$ denotes the treatment indicator (where $T_i=1$ means that unit $i$ was treated, hence we study sharp RD with treatment threshold $c$), and $Z_i\in\IR^p$ is an observed, additional vector of covariates (in contrast to \cite{RDWithPotManyCov} we let $p$ be fixed). Taking this view-point allows us to reprove the asymptotic normality result of \cite{RDDWithCov} in a situation without potential outcomes, i.e., we do not require $Y_i=T_iY_i(1)+(1-T_i)Y_i(0)$ for some potential outcomes $Y_i(0),Y_i(1)$, and under no assumptions on the bandwidth $h$ other than the standard assumptions of $h\to0$ and $nh\to\infty$. The main contribution of this paper is therefore to provide a proof of asymptotic normality in regression discontinuity designs with covariates under minimal assumptions. Such a result is useful for at least three reasons:  First, the variance is obtained in a form that allows us to conclude that it is less or equal to the variance of the baseline estimator under a very general invertibility condition, which is a slightly stronger result than the one stated in \cite{RDDWithCov}. Second, the representation of the bias allows to state straightforward adjustments and assumptions under which the leading term of the bias is independent of the covariates, dispelling the concern that the bias might increase when incorporating covariates. Finally, we are able to conduct a sensitivity analysis in case confounding cannot be ruled out.

Furthermore, this paper can serve as an accessible introduction to studying regression discontinuity under the influence of covariates. Since we tackle the situation of a fixed number of covariates, the idea of proving asymptotic normality is not diluted by more complicated arguments. In fact, our provided proof is fairly comparable to \cite{Hahn2001} in terms of its structure as well as the required assumptions, and can be seen as a canonical extension to the situation with incorporated covariates.

This paper contributes to the wide area of literature considering regression discontinuity designs using covariates. In addition to \cite{RDDWithCov} and \cite{RDWithPotManyCov}, this includes the following research: \cite{Froe2019} propose a fully non-parametric kernel method to account for observed covariates in RDDs. \cite{Noack2021} develop a novel class of estimators, opting for a more efficient estimation than linear adjustment estimators. Finally, \cite{Arai2021} study the case of potentially high-dimensional covariates, proposing estimation and inference methods with covariate selection that use a local approach with kernel weights in combination with $\ell_1$-penalization.

The remainder of this paper is structured as follows. Section \ref{setup} introduces basic terminology and the setup. Section \ref{asymptoticnormality} states the main theorem on asymptotic normality of the covariate-adjusted estimator and provides a proof. Moreover, consequences concerning the representation of the bias and variance, as well as a sensitivity analysis, are discussed. Finally, Section \ref{proofsection} entails all statements that were used in the proof of the main theorem. This particularly includes four propositions, each elaborated in its own sub-chapter. All additionally necessary supporting statements are provided in the supplementary appendix of this paper.

\section{Setup and Notation}\label{setup}

The data is given by an independent sample $\{(Y_i,X_i,Z_i), i=1,...,n\}$ whereby $n$ is the population size, $Y_i\in\mathbb{R}$ is the outcome variable, $X_i\in\mathbb{R}$ the running variable with its probability density function $f_X$ and $Z_i\in\mathbb{R}^p$ is a vector of pre-treatment covariates. The running variable $X_i$ decides whether a unit receives a treatment or not. In fact, we consider so called sharp RDDs and define the treatment variable $T_i$ by $T_i:=\mathds{1}(X_i\geq c)$, which means that a treatment is applied if and only if the running variable exceeds a certain cutoff $c$. Without loss of generality and for the sake of simplicity we want this cutoff to be zero throughout this paper, that is $c=0$ and $T_i=\mathds{1}(X_i\geq 0)$.
Moreover, with potential outcomes we mean the random variables $Y_i(0)$ and $Y_i(1)$, which are the outcome in case a unit receives a treatment or not, respectively. In case of their existence (which we do not require), we have that $Y_i=Y_i(T_i)$, which is the real outcome depending on the value of the treatment variable.

The target of our estimation is $\tau_Y=\lim_{x\searrow 0} \IE(Y_i\mid X_i=x)-\lim_{x\nearrow 0} \IE(Y_i\mid X_i=x)$, in other words, the height of the jump of $\mu_Y(x):=\IE(Y_i\mid X_i=x)$ at $x=0$. On the one hand, we note that under the classical assumption $Y_i=T_iY_i(1)+(1-T_i)Y_i(0)$ with $\IE(Y_i(t)\mid X_i=x)$ being continuous at $x=0$ for $t\in\{0,1\}$, $\tau_Y=\IE(Y_i(1)\mid X_i=0)-\IE(Y_i(0)\mid X_i=0)$ can be interpreted as an average treatment effect at the cut-off. On the other hand, however, manipulation of the running variable can lead to discontinuities in $\IE(Y_i(1)\mid X_i=x)$ as pointed out by \cite{GRR20}. We do neither insist on continuous conditional expectations, nor consider a specific confounding mechanism, and instead allow for general outcomes $Y_i$.

For arbitrary random variables $A$ and $B$, we want to define $\mu_A(x):=\E(A\mid X_i=x)$, $\sigma^2_{AB}(x):=\mu_{AB^\top}(x)-\mu_A(x)\mu_B(x)^\top$ and $\sigma^2_A(x):=\sigma^2_{AA}(x)$. Moreover, given a function $f$ defined on the real line for which the left- and right-sided limit in zero exists, we set $f_+=\lim_{x\searrow 0} f(x)$ and $f_-=\lim_{x\nearrow 0} f(x)$. Then we can also represent the average treatment effect, the object of interest, by $\tau_Y=\mu_{Y+}-\mu_{Y-}$. To estimate $\tau$, we will use a local linear adjustment estimator: Let $K$ denote a kernel function, $K_h(x):=\frac{1}{h}K\left(\frac{x}{h}\right)$, $h>0$ the bandwidth and $V_i:=(1, T_i, X_i/h, T_iX_i/h)^\top$. For the sake of notational simplicity we will just write $V_i$ instead of $V_i(h)$ and omit its dependency of $h$. However, it should be noted that this dependency exists and also needs to be considered when looking at the convergence of expressions containing $V_i$. The estimator is then given by
\begin{equation*}
    \tauh=e_2^\top\big(\thetanhat,\gammanhat\big),
\end{equation*}
where $e_2:=\begin{pmatrix}0 & 1 & 0 \dots & 0\end{pmatrix}^\top\in\IR^{4+p}$ and
\begin{equation*}
    \big(\thetanhat,\gammanhat\big)=\argmin_{(\theta, \gamma)\in\mathbb{R}^{p+4}} \sum_{i=1}^n K_h(X_i)\left(Y_i-V_i^\top\theta-Z_i^\top\gamma\right)^2.
\end{equation*}
If we use the term \textit{kernel} in the remainder of this paper, we refer to a function which is defined as follows.
\begin{definition}[Kernel function]\label{defofkernel}
    A kernel is a function $K:\mathbb{R}\to\mathbb{R}$ which
    \begin{itemize}
        \item is non-negative and integrable,
        \item integrates to one, i.e., $\int_{-\infty}^\infty K(x)\,dx=1$, and
        \item is symmetric, i.e., $K(-x)=K(x)$.
    \end{itemize}
\end{definition}

In order to proof the asymptotic normality later on, we will have to deal with the regression error $r_i(h)=Y_i-V_i^\top\theta_0(h)-Z_i^\top\gamma_0(h)$, whereby $\theta_0(h)$ and $\gamma_0(h)$ are the population regression coefficients defined by
\begin{equation*}
    \big(\theta_0(h),\gamma_0(h)\big)=\argmin_{(\theta,\gamma)\in\mathbb{R}^{p+4}} \E\left(K_h(X_i)\left(Y_i-V_i^\top\theta-Z_i^\top\gamma\right)^2\right).
\end{equation*}

We can also define the average treatment effect on random variables other than the outcome as follows:
\begin{definition}[Average treatment effect]
Let $m\in\mathbb{N}$ and $A\in\mathbb{R}^m$ be a random variable such that the left- and right-sided limit of $\mu_A$ in zero exist. Then we define the average treatment effect of $T_i$ on $A$ as
\begin{equation*}
    \tau_A=\mu_{A+}-\mu_{A-}.
\end{equation*}
\end{definition}

In the following, we give some more definitions that are relevant for the subsequent content of this paper.

\begin{definition}
Let $L:\mathbb{R}\to \mathbb{R}$ be an arbitrary integrable function. Then we define
\begin{equation*}
    L_-^{(\alpha)}=\int_{-\infty}^0 L(u)u^\alpha\,du,\quad L_+^{(\alpha)}=\int_0^\infty L(u)u^\alpha\,du\quad \text{and}\quad L^{(\alpha)}=\int_{-\infty}^\infty L(u)u^\alpha\,du.
\end{equation*}
\end{definition}

\begin{definition}
Let $K$ be a kernel function with $K^{(2)}<\infty$. Then we define the matrix
\begin{equation*}
    \kappa(K):=\begin{pmatrix}
    \Kn&\Kpn&\Ke&\Kpe\\
    \Kpn&\Kpn&\Kpe&\Kpe\\
    \Ke&\Kpe&\Kz&\Kpz\\
    \Kpe&\Kpe&\Kpz&\Kpz
    \end{pmatrix}.
\end{equation*}
\end{definition}

The following definition will be used to state the regularity assumptions in the assertions of this paper more compactly and concisely.
\begin{definition}\label{defofonesideddiff}
    We call a function $f:\mathbb{R}\setminus\{0\}\to\mathbb{R}$ $k$-times one-sided differentiable at 0 when $f$ can be extended continuously to zero from the left and from the right, and all derivatives of $f$ up to order $k$ exist on $(-\infty,0)$ and $(0,\infty)$ and can be extended continuously to zero from the left and from the right. Then we denote the value of the continuous extension of $f$ to zero from the left by $f_-$ and from the right by $f_+$. In an analogous manner, we use that notation for the extension of the derivatives, such as $f'_-$ or $f'_+$.
\end{definition}

\section{Asymptotic Normality of the Estimator}\label{asymptoticnormality}

This section aims to prove the main theorem of this paper, stating that the covariate-adjusted estimator of the average treatment effect is asymptotically normally distributed. As several conditions are required such that this holds, the next subsection will briefly cover all necessary assumptions. After that, the final theorem will be stated and proved, followed by its implications regarding the comparison of the bias and variance to the case of estimation without covariates, as well as a sensitivity analysis in case of a confounded forcing variable.

\subsection{Assumptions}\label{secAssump}
In order to state the asymptotic normality of the covariate-adjusted estimator of the average treatment effect, we need to formulate several assumptions that all need to be satisfied.

\begin{assumption}[Convergence of bandwidth]\label{assumption1}
Let the bandwidth $h=h_n$ be a sequence $(h_n)_{n\in\mathbb{N}}$ with $h_n>0$ for all $n\in\mathbb{N}$ such that $h\to 0$ and $nh\to \infty$ for $n\to\infty$.
\end{assumption}

Assumption \ref{assumption1} states that the bandwidth $h$ converges to zero with increasing number of observations, however, slow enough such that $nh$ still goes to infinity. This is a standard assumption when dealing with kernel regressions.

\begin{assumption}[Kernel function]\label{assumption2}
Let $K:\mathbb{R}\to\mathbb{R}$ be a kernel function (cf. Definition \ref{defofkernel}) with $K^{(4)}, \left(K^2\right)^{(2)}<\infty$ which is supported on $[-1,1]$.
\end{assumption}

These conditions are standard assumptions on the kernel. The finiteness of $K^{(4)}, \left(K^2\right)^{(2)}<\infty$ indeed can also be replaced by additionally requiring continuity. In either case, typical kernels, such as triangular and Epanechnikov kernels, are covered under this assumption. Kernels with unbounded support, like the Gaussian kernel, could be used by enhancing the theorem's proof with some slightly more complex arguments.

\begin{assumption}[Differentiability]\label{assumption3}
Let $\mu_Z$ be continuous. Further, let the conditional expectations $\mu_Z$ and $\mu_Y$ be three times one-sided differentiable at zero (cf. Definition \ref{defofonesideddiff}). Also, let $\mu_{ZZ^\top}$ and $\mu_{ZY}$ be one time one-sided differentiable in zero.
\end{assumption}

This differentiability assumption is used to perform Taylor expansions up to order two and obtain remainder terms of order $O(h^3)$.

In order to state the next assumptions, the following two definitions regarding an adjusted version of the covariates and a covariate-adjusted outcome are necessary.

\begin{definition}
    Define the matrix $M_n\in\mathbb{R}^{4\times p}$ as
    \begin{equation*}
        M_n := \begin{pmatrix}
            \mu_Z(0) & \mathbf{0} & h\muzpm & h\left(\muzpa-\muzpm\right)
        \end{pmatrix}^\top.
    \end{equation*}
\end{definition}

\begin{definition}[Adjusted variables]\label{covadjustproc}
    Define
    \begin{equation*}
        \Zit:=Z_i-M_n^\top V_i\quad\text{ and }\quad\Yit:=Y_i-Z_i^\top\tilde\gamma
    \end{equation*}
    with $\tilde\gamma:=\left(\sigzm+\sigza\right)^{-1}\left(\sigma^2_{ZY-}+\sigma^2_{ZY+}\right)$
\end{definition}

The existence of the inverse in the definition of $\tilde\gamma$ is assumed in the next assumption.

\begin{assumption}[Invertibility]\label{assumption4}
Let $\E\left(K_h(X_i)\Zit\tilde Z_i^\top\right)$ and $\sigzm+\sigza$ be invertible and
\begin{equation*}
    \norm{\E\left(K_h(X_i)\Zit\tilde Z_i^\top\right)^{-1}}_2=O(1).
\end{equation*}
\end{assumption}

In fact, we assume invertibility of a local version of the design matrix which is typical for regression.

\begin{assumption}[Density]\label{assumption5}
Let the probability density function $f_X$ of $X_i$ be three times continuously differentiable in a neighborhood around zero with $f_X(0)>0$.
\end{assumption}

This assumption is particularly relevant in order to have coinciding left and right limits of all derivatives up to order three at zero. Also it allows to find upper bounds for all those derivatives on a compact interval. Indeed we will often be in the case of having a compact interval since integration mostly involves the kernel function which has a compact support by assumption.

\begin{assumption}[Uniform boundedness]\label{assumption6}
Let the following statements be true for all \mbox{$k,l\in\{1,...,p\}$}
\begin{align*}
    &\sup_{n\in\mathbb{N}}\sup_{x\in[-h,h]}\, \E\left(\left(\tilde Z_i^{(k)}r_i(h)\right)^2\;\,\vrule\; X_i=x\right) < \infty,\\
    &\sup_{n\in\mathbb{N}}\sup_{x\in[-h,h]}\, \E\left(\left(\tilde Z_i^{(k)}\tilde Z_i^{(l)}\right)^2\;\,\vrule\; X_i=x\right) < \infty
\end{align*}
and let $\delta>0$ such that
\begin{equation*}
    \sup_{n\in\mathbb{N}}\sup_{x\in[-h,h]}\, \E\left(|r_i(h)|^{2+\delta}\mid X_i=x\right) < \infty.
\end{equation*}
\end{assumption}

These uniform boundedness assumptions are needed to find upper bounds of the means irrespective of the value of the running variable or the number of observations. In particular, it allows us to conclude integrability over $x\in[-h,h]$ later on.
Please note that if we consider the simpler case when the outcome variable has the form $Y_i=V_i^\top\bar\theta+Z_i^\top\bar\gamma+\epsilon_i$ for some fixed $h$ with, e.g., zero-mean, normal $\epsilon_i$ independent of $X_i$, $Z_i$, then the third statement of this assumption is satisfied due to $r_i(h)=\epsilon_i$. Moreover, if we additionally assume boundedness of the covariates, also the first two statements of the assumption are immediately true.

\begin{assumption}[Equi-continuity]\label{assumption7}
Let $\sigma_l, \sigma_r>0$ be finite numbers such that
\begin{align*}
    &\lim_{n\to\infty}\sup_{\lambda\in [0,1]} \left|\E\left(r_i(h)^2\mid X_i=\lambda h\right)-\sigma_r^2\right|=0,\\
    &\lim_{n\to\infty}\sup_{\lambda\in [0,1]} \left|\E\left(r_i(h)^2\mid X_i=-\lambda h\right)-\sigma_l^2\right|=0.
\end{align*}
\end{assumption}

The existence of those limits will be used to verify the convergence condition of Lyapunov's Central Limit Theorem, when proving the convergence to the standard normal distribution. Again, this assumption is satisfied if we examine an outcome of the form $Y_i=V_i^\top\bar\theta+Z_i^\top\bar\gamma+\epsilon_i$ as described previously.

\subsection{Main Theorem}

The representation of the bias and the variance that are used in the theorem include two coefficients which are dependent on the kernel function. After defining those, we will finally state the theorem.

\begin{definition}
Define
\begin{equation*}
    \mathcal{C_B}:=\frac{2\left(\Kpz\right)^2-2\Kpe\Kpd}{\Kpz-2\left(\Kpe\right)^2}
\end{equation*}
and
\begin{equation*}
    \mathcal{C_S}:=\frac{\left(K^2\right)_+^{(0)}\left(\Kpz\right)^2+\left(K^2\right)^{(2)}_+\left(\Kpe\right)^2-2\left(K^2\right)^{(1)}_+\Kpz\Kpe}{\left[\left(\Kpe\right)^2-\frac{1}{2}\Kpz\right]^2}.
\end{equation*}
\end{definition}

\begin{theorem}\label{mainthm}
Suppose that the assumptions of Section \ref{secAssump} hold. There are sequences \Bn and \Sn with
\begin{equation}\label{blue1}
    \begin{split}
        \Bn = \frac{\Cb}{2}\left(\muytppa-\muytppm\right)+o(1),\quad\Sn^2 = \frac{\Cs}{f_X(0)}\left(\sigyta+\sigytm\right)+o(1)
    \end{split}
\end{equation}
such that the following statement is true:
\begin{equation*}
    \begin{split}
        \frac{\sqrt{nh}(\tauh-\tauy-h^2\Bn)}{\Sn}\xrightarrow{d}\mathcal{N}(0,1).
    \end{split}
\end{equation*}
\end{theorem}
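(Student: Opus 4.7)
The high-level strategy is to write the centered and scaled estimator as a deterministic bias plus a weighted sum of the population regression residuals $r_i(h)$, show that the deterministic part is $h^2\Bn+o(h^2)$ by a Taylor expansion, and apply a triangular-array Lyapunov CLT to the stochastic part.

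First I would decompose $\tauh-\tauy = e_2^\top(\thetanhat-\theta_0(h)) + (e_2^\top\theta_0(h)-\tauy)$, handling the two summands separately. For the stochastic summand I would employ a weighted Frisch--Waugh--Lovell style partialling-out: using the normal equations of the weighted least squares problem and block-inversion, one obtains
\begin{equation*}
\thetanhat-\theta_0(h) = \widehat A_n^{-1}\,\widehat B_n,
\end{equation*}
where $\widehat A_n$ is the kernel-weighted Gram matrix of $V_i$ after partialling out the $Z_i$-block, and $\widehat B_n$ is a weighted sum in the residuals $r_i(h)$ against the same partialled-out regressors. The role of the matrix $M_n$ and of $\Zit=Z_i-M_n^\top V_i$ is exactly to serve as the population projection, so that $\IE(K_h(X_i)V_i\Zit^\top)$ is of smaller order than $\IE(K_h(X_i)V_iZ_i^\top)$; Assumption \ref{assumption4} ensures this projection is uniformly well-behaved in $n$, so the sample inverse $\widehat A_n^{-1}$ is bounded and converges.

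Second, for the deterministic summand, I would Taylor-expand $\mu_Y$ on each side of zero up to second order (Assumption \ref{assumption3}) inside the population least-squares criterion defining $\theta_0(h)$, plug in the limiting behaviour of the kernel-weighted moments of $V_i$ (which, using Assumption \ref{assumption5}, converge to $f_X(0)\kappa(K)$ up to trivial scalar factors and sides), and solve the resulting $4\times 4$ linear system explicitly. Projecting with $e_2^\top$ isolates the jump coefficient and yields $e_2^\top\theta_0(h)-\tauy = h^2\Bn+o(h^2)$ with $\Bn$ of the form $\frac{\Cb}{2}(\muytppa-\muytppm)+o(1)$; the kernel-dependent constant $\Cb$ arises directly from solving that linear system on the right half-line.

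Third, for the stochastic part I would study $\sqrt{nh}\,\widehat A_n^{-1}\widehat B_n$. The denominator is handled by a law-of-large-numbers argument (convergence in probability to a constant invertible matrix using Assumption \ref{assumption5}), and for the numerator $\sqrt{nh}\,e_2^\top \widehat A_n^{-1}\widehat B_n$ I would write it as $(nh)^{-1/2}\sum_{i=1}^n W_{n,i}$ for triangular-array terms $W_{n,i}$ built from $K_h(X_i)$, $\Zit$ and $r_i(h)$. Lyapunov's condition is verified using the $(2+\delta)$-moment bound on $r_i(h)$ from Assumption \ref{assumption6}, while the equi-continuity condition of Assumption \ref{assumption7} pins down the asymptotic conditional second moment of the summands and hence the limiting variance in the form $\Sn^2 = \frac{\Cs}{f_X(0)}(\sigyta+\sigytm)+o(1)$. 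A final application of Slutsky's theorem combines the two limits.

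The main technical obstacle will be ensuring that the covariate adjustment does not contaminate the leading-order asymptotics: one has to show that replacing $\gamma_0(h)$ by $\gammanhat$ and $Z_i$ by $\Zit$ really produces an $o_p(1)$ discrepancy at the $\sqrt{nh}$ scale, and that the kernel-weighted cross-terms between $V_i$ and $\Zit$ vanish fast enough relative to the diagonal terms. This is precisely what motivates the specific definitions of $M_n$, $\tilde\gamma$ and $\Yit$, and why Assumption \ref{assumption4} is stated with the $O(1)$ operator-norm bound; carrying out these reductions cleanly is likely where most of the work of the four propositions referenced in Section \ref{proofsection} goes.
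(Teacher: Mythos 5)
Your proposal follows essentially the same route as the paper: the same decomposition into a deterministic bias term and a stochastic term, the same Frisch--Waugh--Lovell partialling-out via $M_n$ and $\Zit$ with Assumption \ref{assumption4} controlling the projected Gram matrix, Taylor expansions of the kernel-weighted moments for the bias, and a triangular-array Lyapunov CLT using Assumptions \ref{assumption6} and \ref{assumption7} for the stochastic part. The only piece left implicit is the conversion step (the paper's Propositions \ref{lemmaE} and \ref{lemmaVar}) showing that the raw bias coefficient $\check\beta_n$ and the limits $\sigma_r^2,\sigma_l^2$ reduce to $\tilde\gamma$ and $\sigma^2_{\tilde Y\pm}$, which is exactly the role you correctly assign to the definitions of $\tilde\gamma$ and $\Yit$.
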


The proof of the theorem consists of two main steps. The first one is to deduce a representation of the bias by using the result of Chapter \ref{biasrepresentation}. In the second main step, we show the asymptotic normality in the covariate-adjusted case and derive a representation of the variance, which is done in Chapter \ref{convergence}. In addition, just some extra effort is required in order to exactly obtain the representation of the bias and variance as claimed above, which is covered by the calculations in Chapters \ref{biasConversion} and \ref{varianceConversion}.

\begin{proof}[Proof of Theorem \ref{mainthm}]
We start by showing that it is enough to consider the case when $\mu_{Z}$ is continuously differentiable. Using the differentiablity and continuity statements about $\mu_Z$ in Assumption \ref{assumption3}, it can be verified that $\mu_{\tilde Z}$ is continuous and three times one-sided differentiable at zero. Also, we can calculate that $\mu_{\tilde Z}(0)=0$ and $\mu'_{\tilde Z+}=\mu'_{\tilde Z-}=0$. In particular, $\mu'_{\tilde Z}$ can be continuously extended in zero. Similarly we argue that $\mu_{\tilde Z \tilde Z^\top}$ and $\mu_{\tilde Z Y}$ are one time one-sided differentiable at zero.

By defining an estimator which is based on $V_i$ and \Zit, namely
\begin{align*}
    \left(\thetancheck(h),\gammancheck(h)\right)=\argmin_{(\theta, \gamma)\in\mathbb{R}^{p+4}} \sum_{i=1}^n K_h(X_i)\left(Y_i-V_i^\top\theta-\Zit^\top\gamma\right)^2,
\end{align*}
we can compare it to our normal estimator based on $V_i$ and $Z_i$:
\begin{align*}
    \big(\thetanhat(h),\gammanhat(h)\big)&=\argmin_{(\theta, \gamma)\in\mathbb{R}^{p+4}} \sum_{i=1}^n K_h(X_i)\left(Y_i-V_i^\top\theta-Z_i^\top\gamma\right)^2\\
    &=\argmin_{(\theta, \gamma)\in\mathbb{R}^{p+4}} \sum_{i=1}^n K_h(X_i)\left(Y_i-V_i^\top(\theta+M_n\gamma)-\Zit^\top\gamma\right)^2.
\end{align*}
In case of uniqueness of those minimizing parameters, they obviously have to be identical and we can conclude:
\begin{equation*}
    \gammancheck(h)=\gammanhat(h)\text{\quad and\quad}\thetancheck(h)=\thetanhat(h)+M_n\gammanhat(h).
\end{equation*}
If multiple minimizing parameters exist, we just state that there are parameters such that the above identification holds.
In an analogous way, we also have
\begin{equation*}
    \left(\thetazerocheck(h), \gammazerocheck(h)\right) := \argmin_{(\theta, \gamma)\in\mathbb{R}^{p+4}} \E\left(K_h(X_i)\left(Y_i-V_i^\top\theta-\Zit^\top\gamma\right)^2\right)
\end{equation*}
and
\begin{equation*}
    \left(\theta_0(h), \gamma_0(h)\right) = \argmin_{(\theta, \gamma)\in\mathbb{R}^{p+4}} \E\left(K_h(X_i)\left(Y_i-V_i^\top(\theta+M_n\gamma)-\Zit^\top\gamma\right)^2\right).
\end{equation*}
By providing the same argument as above, we obtain
\begin{equation*}
    \gammazerocheck(h)=\gamma_0(h) \text{\quad and\quad} \thetazerocheck(h)=\theta_0(h)+M_n\gamma_0(h).
\end{equation*}
As we are only interested in the effect of the treatment variable, we proceed by calculating the second entry of those parameters:
\begin{equation*}
    \thetancheck^{(2)}(h)=\thetanhat^{(2)}(h)+(M_n)_2\gammanhat(h)=\thetanhat^{(2)}(h),
\end{equation*}
whereby the second equality follows from the fact that the second row of $M_n$ is zero by definition. We can now substitute the estimators in the expression which we want to examine asymptotically:
\begin{equation}\label{preveq}
    \sqrt{\frac{nh}{\Sn^2}}\left(\thetanhat^{(2)}(h)-\thetazerocheck^{(2)}(h)\right) = \sqrt{\frac{nh}{\Sn^2}}\left(\thetancheck^{(2)}(h)-\thetazerocheck^{(2)}(h)\right).
\end{equation}
We want to apply Proposition \ref{ref1} with $Z_i=\Zit$ now. Indeed, all necessary conditions are satisfied due to Assumption \ref{assumption1} to \ref{assumption5} and the above regularity considerations on $\mu_{\tilde Z}$. Indeed, Proposition \ref{ref1} gives us $\thetazerocheck^{(2)}=\tauy+h^2\check{\mathcal{B}}_n$ whereby
\begin{equation*}
    \check{\mathcal{B}}_n=\frac{\mathcal{C_B}}{2}\left(\muyppa-\muyppm- \left(\mu''_{\tilde Z+}-\mu''_{\tilde Z-}\right)^\top\check\beta_n \right)+o(1)
\end{equation*}
with
\begin{equation*}
    \check\beta_n=\E\left(K_h(X_i)\Zit\tilde Z_i^{\top}\right)^{-1}\E\left(K_h(X_i)\Zit Y_i\right).
\end{equation*}
By Proposition \ref{lemmaE}, which converts the representation of the bias, we can conclude $\check{\mathcal{B}}_n=\Bn$, leading to
\begin{equation*}
    \thetazerocheck^{(2)}=\tauy+h^2\Bn.
\end{equation*}
Also, we know that $\thetanhat^{(2)}=\tauh$ by definition. Inserting these expressions in (\ref{preveq}) shows that it is enough to examine the asymptotic behavior of
\begin{equation*}
    \sqrt{\frac{nh}{\Sn^2}}\left(\thetancheck^{(2)}(h)-\thetazerocheck^{(2)}(h)\right)
\end{equation*}
in order to prove the assertion of the theorem.

We have
\begin{equation}\label{rcheckeqr}
\begin{split}
    \richeck(h)&=Y_i-V_i^\top\thetazerocheck(h)-\Zit^\top\gammazerocheck(h)\\
    &=Y_i-V_i^\top\big(\theta_0(h)+M_n\gamma_0(h)\big)-\left(Z_i-M_n^\top V_i\right)^\top\gamma_0(h)\\
    &=Y_i-V_i^\top\theta_0(h)-Z_i\gamma_0(h)\\
    &=r_i(h).
\end{split}
\end{equation}
As a consequence, we can replace $r_i(h)$ by $\check r_i(h)$ in Assumption \ref{assumption6} and \ref{assumption7} and they still hold. Thus, Proposition \ref{theoremA1} can be applied with $Z_i=\Zit$ since its assumptions are satisfied by using Assumption \ref{assumption1} to \ref{assumption7} from Section \ref{secAssump}. This gives us that
\begin{equation*}
    \sqrt{\frac{nh}{\check{\mathcal{S}_n^2}}}\left(\thetancheck^{(2)}(h)-\thetazerocheck^{(2)}(h)\right)\xrightarrow{d}\mathcal{N}(0,1)
\end{equation*}
whereby
\begin{equation*}
    \check{\mathcal{S}_n^2}=\frac{1}{h}\E\left(K\left(\frac{X_i}{h}\right)^2(w^\top V_i)^2\check r_i(h)^2\right).
\end{equation*}
From Proposition \ref{lemmaVar}, which converts the representation of the variance, we know that $\check{\mathcal{S}_n^2}=\Sn^2$, directly leading to
\begin{equation*}
    \frac{\sqrt{nh}(\tauh-\tauy-h^2\Bn)}{\Sn}\xrightarrow{d}\mathcal{N}(0,1)
\end{equation*}
by the above considerations. This proves the assertion of the theorem.
\end{proof}

\subsection{Consequences}
In the following, we consider the three main implications that our approach of proving the above theorem has. In particular, we discuss whether additional covariates actually enhance our estimator and its MSE. Therefore, similarly to \cite{RDDWithCov} and \cite{RDWithPotManyCov}, we want to study the bias and variance in comparison to those of the baseline estimator without covariates. Moreover, we conduct a sensitivity analysis in case of a potentially manipulated forcing variable.

\subsubsection{Discussion of Variance}
We want to show that the variance of the covariate-adjusted estimator is less or equal, but does not exceed the variance of the baseline estimator under the condition that the left- and right-sided limits of the covariance matrix of $Z_i$ are invertible, i.e. we need the additional assumption that
\begin{equation*}
    \lim_{x\searrow 0}\left(\mathrm{Cov}\left(Z_i^{(k)},Z_i^{(l)}\mid X_i=x\right)\right)_{k,l\in\{1,...,p\}}\text{ and }\lim_{x\nearrow 0}\left(\mathrm{Cov}\left(Z_i^{(k)},Z_i^{(l)}\mid X_i=x\right)\right)_{k,l\in\{1,...,p\}}
\end{equation*}
are invertible. With this condition being satisfied, the idea of the proof is to show that $\tilde\gamma$ minimizes the function
\begin{equation*}
    \gamma\mapsto\lim_{x\searrow 0}\mathrm{Var}\left(Y_i-Z_i^\top\gamma\mid X_i=x\right)+\lim_{x\nearrow 0}\mathrm{Var}\left(Y_i-Z_i^\top\gamma\mid X_i=x\right)
\end{equation*}
leading to
\begin{equation}\label{variancesmaller}
    \sigyta+\sigytm\leq\sigma^2_{Y+}+\sigma^2_{Y-}
\end{equation}
whereby the expression on the right-hand side is the leading term of the baseline estimator's variance.

This particularly shows that the variance does not worsen in comparison to the case without covariates, and is a stronger statement than the one deduced by \cite{RDDWithCov}. In fact, they state that in their setting, (\ref{variancesmaller}) is just satisfied under the condition $\tilde\gamma=\gamma_{Y+}=\gamma_{Y-}$, whereby $\gamma_{Y+}=\left(\sigma_{Z+}^2\right)^{-1}\sigma_{ZY+}^2$ and $\gamma_{Y-}=\left(\sigma_{Z-}^2\right)^{-1}\sigma_{ZY-}^2$. Yet, there are examples of RDDs without this condition being satisfied. For instance, we can assume
\begin{itemize}
    \item $\mu_{Z+}=\mu_{Z-}$, $\mu_{ZZ^\top+}=\mu_{ZZ^\top-}$,
    \item $p,q$ polynomials with $p(0)\neq q(0)$, $0\neq w\in\mathbb{R}^{p}$ and
    \item $Y_i=T_i(p(X_i)+Z_i^\top 2w)+(1-T_i)(q(X_i)+Z_i^\top w)$.
\end{itemize}
Then $\tilde\gamma=\frac{3}{2}w\neq 2w=\gamma_{Y+}$, which can be verified by calculating $\sigma_{Z+}^2=\sigma_{Z-}^2$, $\sigma_{ZY+}^2=2\sigma_{Z+}^2w$ and $\sigma_{ZY-}^2=\sigma_{Z+}^2w$. Despite this condition not being satisfied, $\mathrm{Var}(\tilde Y+)+\mathrm{Var}(\tilde Y-)\leq\mathrm{Var}(Y+)+\mathrm{Var}(Y-)$ still holds true, which is a consequence of $\sigma_{\tilde Y+}^2+\sigma_{\tilde Y-}^2=\sigma_{Y+}^2+\sigma_{Y-}^2-\frac{9}{2}w^\top\sigma_{Z+}^2w$ and $\sigma_{Z+}^2$ being positive semidefinite.

Next, we show that $\tilde\gamma$ minimizes the variance under the above invertibility assumption, nonetheless irrespective of the condition $\tilde\gamma=\gamma_{Y+}=\gamma_{Y-}$.
\begin{proof}[Proof (that $\tilde\gamma$ minimizes the function).]
We have
\begin{align*}
    &\nabla_\gamma\left(\lim_{x\searrow 0}\mathrm{Var}\left(Y_i-Z_i^\top\gamma\mid X_i=x\right)+\lim_{x\nearrow 0}\mathrm{Var}\left(Y_i-Z_i^\top\gamma\mid X_i=x\right)\right)\\
    =\;&2\left(\left(\sigma^2_{Z-}+\sigma^2_{Z+}\right)\gamma-\left(\sigma^2_{ZY-}+\sigma^2_{ZY+}\right)\right).
\end{align*}
This gradient vanishes when inserting $\tilde\gamma$. Now, it remains to show that the Hessian matrix is positive definite. In fact, we obtain
\begin{equation}\label{hessian}
\begin{split}
    &\mathrm{H}_{\gamma\mapsto\left(\lim_{x\searrow 0}\mathrm{Var}\left(Y_i-Z_i^\top\gamma\mid X_i=x\right)+\lim_{x\nearrow 0}\mathrm{Var}\left(Y_i-Z_i^\top\gamma\mid X_i=x\right)\right)}\\
    \equiv\;&2\lim_{x\searrow 0}\left(\mathrm{Cov}\left(Z_i^{(k)},Z_i^{(l)}\mid X_i=x\right)\right)_{k,l\in\{1,...,p\}}\\
    &+2\lim_{x\nearrow 0}\left(\mathrm{Cov}\left(Z_i^{(k)},Z_i^{(l)}\mid X_i=x\right)\right)_{k,l\in\{1,...,p\}}.
\end{split}
\end{equation}
We consider the first summand, and obtain for $y\in\mathbb{R}^p\setminus\{0\}$ that
\begin{align*}
    &y^\top\lim_{x\searrow 0}\left(\mathrm{Cov}\left(Z_i^{(k)},Z_i^{(l)}\mid X_i=x\right)\right)_{k,l\in\{1,...,p\}} y\\
    =\;&\lim_{x\searrow 0}\E\left(\left(\sum_{k=1}^p y_k\left(Z_i^{(k)}-\E\left(Z_i^{(k)}\mid X_i=x\right)\right)\right)^2\;\,\vrule\; X_i=x\right)\overset{(*)}{\geq} 0.
\end{align*}
Hence, the matrix $\lim_{x\searrow 0}\left(\mathrm{Cov}\left(Z_i^{(k)},Z_i^{(l)}\mid X_i=x\right)\right)_{k,l\in\{1,...,p\}}$ is positive semidefinite. Consequently, we can apply the Cholesky decomposition to get
\begin{equation*}
    \lim_{x\searrow 0}\left(\mathrm{Cov}\left(Z_i^{(k)},Z_i^{(l)}\mid X_i=x\right)\right)_{k,l\in\{1,...,p\}}=L^\top DL
\end{equation*}
for some orthogonal lower triangular matrix $L\in\mathbb{R}^{p\times p}$ and a diagonal matrix $D\in\mathbb{R}^{p\times p}$ having the real eigenvalues on its diagonal. Note that those eigenvalues are non-negative since the matrix is positive semidefinite and unequal to zero since we assumed the limit of the covariance matrix to be invertible. For this reason, $D$ as well as $D^{\frac{1}{2}}$ are invertible, whereby $D^{\frac{1}{2}}$ is the diagonal matrix obtained by taking the square root of the diagonal entries of $D$. Therefore, $D^{\frac{1}{2}}L$ is invertible, which means that
\begin{equation*}
    y^\top L^\top DL y=y^\top L^\top D^{\frac{1}{2}}D^{\frac{1}{2}} L y=(D^{\frac{1}{2}}Ly)^\top D^{\frac{1}{2}}Ly=\norm{D^{\frac{1}{2}}Ly}_2^2=0\quad\Leftrightarrow\quad y=0.
\end{equation*}
Therefore, even a strict inequality holds in $(*)$, which proves that the matrix is positive definite. In an analogous way, we can prove that also the second summand in (\ref{hessian}) is positive definite, leading to the whole Hessian being positive definite. Finally, we can conclude that $\tilde\gamma$ is a minimum.

\end{proof}

\subsubsection{Discussion of Bias and Higher Order Estimation}
The first term of the bias representation can be written as
\begin{equation*}
    \muytppa-\muytppm=\left(\muyppa-\muyppm\right)-\left(\mu''_{Z+}-\mu''_{Z-}\right)^\top\tilde\gamma.
\end{equation*}

The first summand equals the bias of the baseline estimator without covariates up to a constant, whereby the second summand describes the impact of the covariates on the bias. And indeed, the covariates can contribute to the bias in both a favorable and unfavorable way. If researchers are concerned that additional covariates introduce additional bias but if they are generally happy with a convergence rate of $n^{-2/5}$ (corresponding to $h=n^{-1/5}$), we demonstrate in the following slightly unconventional analysis that our proof can be extended to higher order locally polynomial estimators by keeping the same order of Taylor expansions. More precisely, define
\begin{equation*}
    V_i=\left(1, T_i, \frac{X_i}{h}, \frac{T_iX_i}{h}, \frac{X_i^2}{h^2}, \frac{T_iX_i^2}{h^2}\right)^\top
\end{equation*}
and
\begin{equation*}
    M_n=\left(\mu_Z(0), 0, h\mu_{Z-}', h(\mu_{Z+}'-\mu_{Z-}'), h^2\mu_{Z-}'', h^2(\mu_{Z+}''-\mu_{Z-}'')\right)^\top,
\end{equation*}
and replace it in all expressions of Section \ref{setup} and in the assumptions of Section \ref{secAssump}. That is, instead of local linear regression, we perform local quadratic regression. Then, we obtain $\mu_{\tilde Z}(0)=\mu_{\tilde Z+}'=\mu_{\tilde Z-}'=\mu_{\tilde Z+}''=\mu_{\tilde Z-}''=0$.
That is, we additionally correct for the jump of $\mu_{Z}''$ at zero. While analyzing this estimator, we do not want to impose further assumptions on the derivatives of $f_X$ and $\mu_Y$.

We want to provide some details on how to adjust the proof: In the same manner as before, we obtain statement (\ref{preveq}). Now, the statements in Section \ref{biasrepresentation} can be adjusted in a natural way such that they can be applied on (the new versions of) $\Zit$ and $V_i$, whereby we have to make a slightly stronger assumption of $K^{(6)}<\infty$. In fact, the adjusted version of Lemma \ref{lemmaA12} gives
\begin{equation*}
    \kappa(K)^{-1}\E(K_h(X_i)V_iA)=\underbrace{\begin{pmatrix}
    f_X(0)\mu_{A-}\\f_X(0)\tau_A\\h[\mu_Af_X]'_-\\h([\mu_Af_X]'_+-[\mu_Af_X]'_-)\\0\\0
    \end{pmatrix}}_{(*)}+h^2B(K,A)+O(h^3)
\end{equation*}
for $h\to 0$, whereby
\begin{equation*}
    B(K,A):=\frac{1}{2}\kappa(K)^{-1}\left[ \begin{pmatrix}
    \Kpz\\\Kpz\\\Kpd\\\Kpd\\K_+^{(4)}\\K_+^{(4)}
    \end{pmatrix}[\mu_Af_X]''_+ + \begin{pmatrix}
    \Kmz\\0\\\Kmd\\0\\K_-^{(4)}\\0
    \end{pmatrix}[\mu_Af_X]''_- \right]
\end{equation*}
and
\begin{equation*}
    \kappa(K):=\begin{pmatrix}
    \Kn&\Kpn&\Ke&\Kpe&\Kz&\Kpz\\\
    \Kpn&\Kpn&\Kpe&\Kpe&\Kpz&\Kpz\\
    \Ke&\Kpe&\Kz&\Kpz&\Kd&\Kpd\\
    \Kpe&\Kpe&\Kpz&\Kpz&\Kpd&\Kpd\\
    \Kz&\Kpz&\Kd&\Kpd&K^{(4)}&K_+^{(4)}\\
    \Kpz&\Kpz&\Kpd&\Kpd&K_+^{(4)}&K_+^{(4)}
    \end{pmatrix}.
\end{equation*}
The relevant equations for the bias formula in the proof of Proposition \ref{ref1} are \eqref{eq:bias_Y} and \eqref{eq:bias_Z}. Since we may assume twice differentiability of $\mu_{\tilde{Z}}$, we conclude that $B(K,\tilde{Z})=0$. Therefore, \eqref{eq:bias_Z} provides no contribution to the bias of order $h^2$. Note furthermore that in this case
$$B(K,Y)=\frac{1}{2}\left[\begin{pmatrix}0 \\ 0 \\ 0 \\ 0 \\ 0 \\ 1\end{pmatrix}\left[\mu_Yf_X\right]_+''+\begin{pmatrix} 0 \\ 0 \\ 0 \\ 0 \\ 1 \\ -1 \end{pmatrix}\left[\mu_Yf_X\right]_-''\right]=\frac{1}{2}\begin{pmatrix} 0 \\0 \\0 \\0 \\ \left[\mu_Yf_X\right]_-'' \\ \left[\mu_Yf_X\right]_+''-\left[\mu_Yf_X\right]_-'' \\\end{pmatrix}.$$
In \eqref{eq:bias_Y} we see that the second entry of the previous vector is relevant for the bias of order $h^2$, i.e., in this case it does not exist. Moreover, for the second part of \eqref{eq:bias_Y} it can be shown that
$$\kappa_{h,b}=\begin{pmatrix} O(h^3) & O(h^3) & O(h^2) & O(h^2) & O(h) & O(h) \end{pmatrix}$$
in equation \eqref{kappahb}, implying that there is only a bias of order $o(h^2)$. Thus, in a local quadratic regression, we obtain a representation of the bias which does not depend on the influence of the covariates anymore.

\begin{subsubsection}{Sensitivity Analysis}
In this section we show how Theorem \ref{mainthm} can be applied in case $\tau_Y$ does not admit a causal interpretation. Examples for confounding through manipulation of the forcing variable $X_i$ are given in, e.g., \cite{GRR20}. In that paper, the manipulation is reflected as a potential discontinuity of the density of the forcing variable by dividing the units in two different categories: so called \textit{always-assigned} units, which always have a realization of the forcing variable greater or equal the cutoff (i.e. we have a one-sided manipulation), and \textit{potentially-assigned} units, which behave according to the standard RD framework. Let us therefore suppose that there are potential outcomes $Y_i(t)$ with continuous $\IE(Y_i(t)\mid X_i=x)$ for $t\in\{0,1\}$ but that we observe
$$Y_i=T_iY_i(1)+(1-T_i)Y_i(0)+\epsilon_i,$$
where we suppose that $\Delta:=\lim_{x\searrow 0} \IE(\epsilon\mid X_i=x)-\lim_{x\nearrow 0} \IE(\epsilon_i\mid X_i=x)\neq0$. Denote furthermore $\tau_0:=\IE(Y_i(1)\mid X_i=0)-\IE(Y_i(0)\mid X_i=0)$. In this case Theorem \ref{mainthm} still applies with $\tau_Y=\tau_0+\Delta$. We call $\tau_0$ the effect of interest and $\Delta$ the level of confounding. Suppose our interest lies in understanding if $\tau_0>\overline{\tau}$ for some given $\overline{\tau}>0$. While it is not possible to test this hypothesis directly, Theorem \ref{mainthm} still allows to conduct a sensitivity analysis as in \cite{JRC23}. Intuitively speaking, the idea is to test for statements of the type, either $\tau_0>\overline{\tau}$ or the confounding level is larger than $\hat{\delta}_n$. If the confounding level $\hat{\delta}_n$ is unplausibly high, one would have to accept that the effect $\tau_0$ is larger than $\overline{\tau}$.  More formally, consider for $\delta>0$ the hypotheses
$$H_0(\delta):\quad \tau_0\leq\overline{\tau}\textrm{ and }|\Delta|\leq\delta.$$
These hypotheses formalize the previously mentioned intuition: we reject $H_0(\delta)$ at level $\alpha\in(0,1)$ either if the true effect is larger than $\overline{\tau}$ or if the level of confounding is larger than $\delta$. Our interest is finding the largest $\delta$ for which $H_0(\delta)$ would be rejected. A natural strategy is to test all hypotheses $H_0(\delta)$ and let $\mathcal{R}:=\{\delta>0:\,H_0(\delta)\textrm{ rejected}\}$. Then, $\hat{\delta}_n:=\sup\mathcal{R}$ denotes the largest hypothesis which would be rejected on confidence level $\alpha$. \cite{JRC23} argue that this strategy does not run into multiple testing issues. We explain their argument in our setting.

First, we specify confidence intervals for $\tau_0$ under the assumption that the level of confounding is smaller than $\delta>0$. Let $\hat{\Sn}$ be an estimator for $\Sn$ such that $\hat{\Sn}/\Sn\overset{P}{\to}1$ and suppose undersmoothing, i.e., $nh^5\to0$. In this case we obtain from Theorem \ref{mainthm} that
$$\frac{\sqrt{nh}}{\hat{\Sn}}\left(\hat{\tau}_h-\tau_0-\Delta\right)\overset{d}{\to}\mathcal{N}(0,1).$$
As an alternative to undersmoothing, one could certainly handle the bias differently as, e.g., in \cite{RDDWithCov} or \cite{Armstrong_2018}, but we would like to present an approach that is compatible with Theorem \ref{mainthm}. Let $q_{1-\alpha}$ denote the $(1-\alpha)$-quantile of the standard normal distribution. Then, we obtain under the assumption $|\Delta|\leq\delta$ that
\begin{align*}
&\limsup_{n\to\infty}\IP\left(\tau_0\geq\hat{\tau}_h-\delta-\frac{\hat{\Sn}}{\sqrt{nh}}q_{1-\alpha}\right)\geq\limsup_{n\to\infty}\IP\left(\tau_0\geq\hat{\tau}_h-\Delta-\frac{\hat{\Sn}}{\sqrt{nh}}q_{1-\alpha}\right) \\
=&\limsup_{n\to\infty}\IP\left(\frac{\sqrt{nh}}{\hat{\Sn}}\left(\hat{\tau}_h-\tau_0-\Delta\right)\leq q_{1-\alpha}\right)=1-\alpha.
\end{align*}
Therefore, $C_n(\delta):=(\hat{\tau}_h-\delta-q_{1-\alpha}\hat{\Sn}/\sqrt{nh},\infty)$ is a confidence interval for $\tau_0$ of level $1-\alpha$ under the assumption $|\Delta|\leq\delta$. This directly yields a way of testing each hypothesis $H_0(\delta)$ at level $\alpha$: reject $H_0(\delta)$ if $\overline{\tau}\notin C_n(\delta)$. Using this test, we obtain,
$$\mathcal{R}:=\left(0,\hat{\tau}_h-\overline{\tau}-q_{1-\alpha}\frac{\hat{\Sn}}{\sqrt{nh}}\right].$$
Let furthermore $\mathcal{H}_0$ be the set of all true hypotheses, i.e., if $\tau_0>\overline{\tau}$, then $\mathcal{H}_0=\emptyset$ and in case $\tau_0\leq\overline{\tau}$, we have $\mathcal{H}_0=[|\Delta|,\infty)$. We can show (as in \cite{JRC23}) that in case of no true effect, i.e. $\tau_0\leq\overline{\tau}$, we reject no $H_0(\delta)$ with an unreasonably large $\delta$. In fact, the probability of rejecting a true hypothesis is bounded from above by $\alpha$: if $\tau_0\leq\overline{\tau}$, we get
\begin{align*}
\IP(\mathcal{R}\cap\mathcal{H}_0\neq\emptyset)=&\IP\left(\hat{\tau}_h-\overline{\tau}-q_{1-\alpha}\frac{\hat{\Sn}}{\sqrt{nh}}\geq|\Delta|\right)\leq\IP\left(\hat{\tau}_h-\tau_0-q_{1-\alpha}\frac{\hat{\Sn}}{\sqrt{nh}}\geq\Delta\right) \\
=&\IP\left(\frac{\sqrt{nh}}{\hat{\Sn}}\left(\hat{\tau}_h-\tau_0-\Delta\right)\geq q_{1-\alpha}\right)\to\alpha.
\end{align*}
Hence, we conclude that, if there is no large effect $\tau_0>\overline{\tau}$, we reject only with probability $\alpha$ a hypothesis which is in fact true. Therefore we may say that if there is no effect larger than $\overline{\tau}$, then we have to accept a level of confounding
$$|\Delta|\geq \hat{\delta}_n:=\hat{\tau}_h-\overline{\tau}-q_{1-\alpha}\frac{\hat{\Sn}}{\sqrt{nh}}.$$
\end{subsubsection}

\section{Statements Used in the Theorem}\label{proofsection}
This section contains all statements for the above main theorem's proof. In fact, we have to show four propositions, whereby each is stated together with its proof separately in one of the following subsections. Supporting lemmas which are used can be found in the appendix of this paper.

\begin{subsection}{Computing the Representation of the Bias}\label{biasrepresentation}

The aim of this section is to provide a representation of the bias such that we have $\theta_0^{(2)} = \tauy + h^2\Bn$. This will be achieved by using a kernel matrix and performing numerous Taylor expansions. Some ideas originate from \cite{RDWithPotManyCov} and are adapted to the case of a fixed number of covariates.

\begin{proposition}\label{ref1}
Let all assumptions of Lemma \ref{lemmaA11} and Lemma \ref{lemmaA13} hold. Also, suppose that $\mu_Y$ is three times one-sided differentiable in zero (cf. Definition \ref{defofonesideddiff}). Furthermore, define
\begin{align*}
    \Bn := &\;\frac{(\Kpz)^2-\Kpe\Kpd}{\Kpz-2(\Kpe)^2}\Bigg( \muyppa-\muyppm-\left(\mu''_{Z+}-\mu''_{Z-}\right)^\top\E(K_h(X_i)Z_iZ_i^\top)^{-1}\E(K_h(X_i)Z_iY_i) \Bigg)\\
    &+ o(1).
\end{align*}
Then,
\begin{equation*}
    \theta_0^{(2)} = \tauy + h^2\Bn.
\end{equation*}
\end{proposition}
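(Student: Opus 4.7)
The plan is to derive Proposition \ref{ref1} from the first-order conditions of the population least-squares problem defining $(\theta_0,\gamma_0)$, eliminate $\gamma_0$ via a Schur-complement argument, and then Taylor-expand the resulting moment matrices using Lemma \ref{lemmaA12} together with the design-matrix inversion result Lemma \ref{lemmaA11}.

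I would first differentiate the expected squared loss in $\theta$ and $\gamma$ to obtain the block normal equations
\begin{align*}
\E\!\left(K_h(X_i) V_i V_i^\top\right)\theta_0 + \E\!\left(K_h(X_i) V_i Z_i^\top\right)\gamma_0 &= \E\!\left(K_h(X_i) V_i Y_i\right), \\
\E\!\left(K_h(X_i) Z_i V_i^\top\right)\theta_0 + \E\!\left(K_h(X_i) Z_i Z_i^\top\right)\gamma_0 &= \E\!\left(K_h(X_i) Z_i Y_i\right).
\end{align*}
Solving the second block for $\gamma_0$ and introducing $\beta_n := \E(K_h(X_i) Z_i Z_i^\top)^{-1}\E(K_h(X_i) Z_i Y_i)$---exactly the quantity appearing in the claimed bias formula---substitution reduces the problem to
\[
W\theta_0 = \E\!\left(K_h(X_i) V_i\left(Y_i - Z_i^\top\beta_n\right)\right),
\]
with $W := \E(K_h V_i V_i^\top) - \E(K_h V_i Z_i^\top)\E(K_h Z_i Z_i^\top)^{-1}\E(K_h Z_i V_i^\top)$ the Schur complement in $\gamma$. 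Equivalently, by Frisch--Waugh--Lovell, this is the $\theta$-block of the $K_h$-weighted regression of the partialled-out response $Y_i-Z_i^\top\beta_n$ on $V_i$.

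Next I would apply Lemma \ref{lemmaA12}, which delivers a three-term Taylor expansion $\kappa(K)^{-1}\E(K_h V_i A)=(\text{constant})+h(\text{linear})+h^2(\text{quadratic})+O(h^3)$ for a scalar $A$ with smooth conditional mean, separately to $A=Y_i$ (using three-times one-sided differentiability of $\mu_Y$) and to $A=Z_i^{(k)}$ componentwise (via Assumption \ref{assumption3}); Lemma \ref{lemmaA11} handles the required design-matrix inversion. The $O(1)$-contribution to $e_2^\top\theta_0$ must then reduce to $\tau_Y$, since the leading term from $A=Y_i$ in the second slot is $\tau_Y$ while the leading contribution from $Z_i^\top\beta_n$ vanishes by $\tau_Z=0$ (continuity of $\mu_Z$). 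The $O(h)$-correction should cancel by symmetry of $K$ together with the explicit structure of $e_2^\top\kappa(K)^{-1}$. At order $h^2$, the $Y$-part produces $\mu''_{Y+}-\mu''_{Y-}$ and the $Z$-part produces $-(\mu''_{Z+}-\mu''_{Z-})^\top\beta_n$, both multiplied by the kernel factor $\bigl[(K_+^{(2)})^2-K_+^{(1)}K_+^{(3)}\bigr]/\bigl[K_+^{(2)}-2(K_+^{(1)})^2\bigr]$ extracted from $e_2^\top\kappa(K)^{-1}$ applied to the quadratic vectors from Lemma \ref{lemmaA12}; this matches the claimed $\mathcal{B}_n$ precisely.

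The main obstacle will be handling $W^{-1}$ carefully. Unlike the baseline RD setting where the leading design matrix is simply $f_X(0)\kappa(K)$, the Schur correction $\E(K_h V_i Z_i^\top)\E(K_h Z_i Z_i^\top)^{-1}\E(K_h Z_i V_i^\top)$ is generically $O(1)$ (for example when $\mu_Z(0)\neq 0$), and therefore does contribute at leading order---in contrast to the situation after the reduction to $\tilde Z_i$ used in the proof of Theorem \ref{mainthm}. Lemma \ref{lemmaA11} (and likely \ref{lemmaA13}) must be invoked to show that the $e_2^\top$-projection absorbs this correction benignly: via the Frisch--Waugh formulation above, the second coordinate of $\theta_0$ retains its RD interpretation as the jump in the local linear fit of the residualized outcome $Y_i-Z_i^\top\beta_n$, so the standard expansion $\tau_{Y-Z^\top\beta_n}+h^2(\text{bias})+o(h^2)$ applies. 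Carrying this through requires careful bookkeeping of the $h$- and $h^2$-remainders in $W^{-1}$, in Lemma \ref{lemmaA12}, and in $\beta_n$ (which itself has a Taylor-type expansion controlled by smoothness of $\mu_{ZZ^\top}$ and $\mu_{ZY}$ at zero); this is where the technical heart of the proof will lie.
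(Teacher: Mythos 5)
Your skeleton is the paper's proof: the paper also writes the population normal equations, eliminates $\gamma_0$ via the block-inverse/Schur complement so that $\theta_0$ solves $W\theta_0=\E\left(K_h(X_i)V_i(Y_i-Z_i^\top\beta_n)\right)$ with $\beta_n=\E(K_h(X_i)Z_iZ_i^\top)^{-1}\E(K_h(X_i)Z_iY_i)$, expands the partialled-out response with Lemma \ref{lemmaA12} applied to $A=Y_i$ and $A=Z_i^{(k)}$, controls $\beta_n$ with Lemma \ref{lemmaA11}, and extracts the $h^2$ coefficient; the constant $\tfrac{a_1}{2}=\frac{(\Kpz)^2-\Kpe\Kpd}{\Kpz-2(\Kpe)^2}$ comes out exactly as you say.

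However, your ``main obstacle'' paragraph rests on a misreading of the hypotheses. Proposition \ref{ref1} assumes everything in Lemma \ref{lemmaA13}, and those assumptions include $\mu_Z(0)=0$ and $\mu'_{Z+}=\mu'_{Z-}=0$ (the proposition is only ever invoked with $Z_i=\Zit$, for which these hold by construction). Consequently $\kappa(K)^{-1}\E(K_h(X_i)V_iZ_i^\top)=h^2B+O(h^3)$, the Schur correction is $O(h^4)$ rather than $O(1)$, and Lemma \ref{lemmaA13} gives $[W^{-1}\E(K_h(X_i)V_iV_i^\top)\cdot(\cdot)]$ directly as $e_2^\top+O(h^4)$ times the Lemma \ref{lemmaA15} expansion --- no Frisch--Waugh reinterpretation is needed. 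Conversely, in the regime you envision ($\mu_Z(0)\neq0$ or $\mu'_{Z\pm}\neq0$) the stated formula for $\Bn$ would be \emph{false}: the first factor is $\frac{1}{f_X(0)}(e_2^\top+\kappa_{h,b}(K))+o(h^2)$ with $\kappa_{h,b}(K)$ having an $O(h)$ fourth entry, which would pick up the nonvanishing $h\left((\mu_Zf_X)'_+-(\mu_Zf_X)'_-\right)^\top\beta_n$ slot and contribute additional $h^2$ bias not present in the claimed expression. A related point you gloss over: even for the $Y$-part, $[B(K,Y)]_2$ involves $(\mu_Yf_X)''_\pm$, not $\mu''_{Y\pm}$ alone; the terms in $f_X'(0)$ and $f_X''(0)$ only cancel after being combined with the $O(h)\times O(h)$ cross terms from $\kappa_{h,b}(K)$ against the third and fourth slots of the expansion of $\kappa(K)^{-1}\E(K_h(X_i)V_iY_i)$ (the paper's equation \eqref{eq:bias_Y}). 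This cancellation is an explicit computation, not a consequence of kernel symmetry, and it is where the actual work of the proof lies.
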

\begin{proof}
Choose $a_1,a_2,b_1,b_2$ like in Lemma \ref{lemmaA14} and set
\begin{align*}
    \kappa_{h,b}(K):=&\left[ \left( I+h\frac{f_X'(0)}{f_X(0)} \begin{pmatrix}0&0&a_1&0\\0&0&0&a_1\\1&0&-a_2&0\\0&1&2a_2&a_2\end{pmatrix} +h^2\frac{f_X''(0)}{2f_X(0)} \begin{pmatrix}a_1&0&-b_1&0\\0&a_1&2b_1&b_1\\-a_2&0&b_2&0\\2a_2&a_2&0&b_2\end{pmatrix} \right)^{-1} \right]_2-\begin{pmatrix}0&1&0&0\end{pmatrix}.
\end{align*}
We know by definition that
\begin{equation*}
    (\theta_0(h), \gamma_0(h))=\argmin_{(\theta,\gamma)\in\mathbb{R}^{p+4}}\E\left(K_h(X_i)(Y_i-\theta^\top V_i-\gamma^\top Z_i)^2\right).
\end{equation*}
By using the least squares algebra, we obtain
\begin{align*}
    \begin{pmatrix}\theta_0(h)\\ \gamma_0(h)\end{pmatrix}=\begin{pmatrix}\E(K_h(X_i)V_iV_i^\top)&\E(K_h(X_i)V_iZ_i^\top)\\\E(K_h(X_i)Z_iV_i^\top)&\E(K_h(X_i)Z_iZ_i^\top)\end{pmatrix}^{-1} \begin{pmatrix}\E(K_h(X_i)V_iY_i)\\\E(K_h(X_i)Z_iY_i)\end{pmatrix}
\end{align*}
We calculate the inverse of the matrix by using the well-known formula for inverses of block matrices. Since our goal is to find a formula for $\theta_0^{(2)}$, we only need to calculate the upper blocks of the inverse.
\begin{align*}
&\begin{pmatrix}\E(K_h(X_i)V_iV_i^\top)&\E(K_h(X_i)V_iZ_i^\top)\\\E(K_h(X_i)Z_iV_i^\top)&\E(K_h(X_i)Z_iZ_i^\top)\end{pmatrix}^{-1}=
    \begin{pmatrix}
    R & -R\E(K_h(X_i)V_iZ_i^\top)\E(K_h(X_i)Z_iZ_i^\top)^{-1} \\ * & *
    \end{pmatrix}
\end{align*}
whereby $R:=(\E(K_h(X_i)V_iV_i^\top)-\E(K_h(X_i)V_iZ_i^\top)\E(K_h(X_i)Z_iZ_i^\top)^{-1}\E(K_h(X_i)Z_iV_i^\top))^{-1}$.
Denote the second row of $R$ by $R_2$. Now, the second entry of $\theta_0(h)$ can be represented as follows:
\begin{equation}\label{repofthetazerocheck}
\begin{split}
    \theta_0^{(2)}(h)=&\bigg[\Big(I-\E\left(K_h(X_i)V_iV_i^\top\right)^{-1}\E\left(K_h(X_i)V_iZ_i^\top\right)\E\left(K_h(X_i)Z_iZ_i^\top\right)^{-1}\\
    &\E\left(K_h(X_i)Z_iV_i^\top\right)\Big)^{-1}\bigg]_2\left(\kappa(K)^{-1}\E\left(K_h(X_i)V_iV_i^\top\right)\right)^{-1}\kappa(K)^{-1}\\
    &\Big(\E(K_h(X_i)V_iY_i)-\E\left(K_h(X_i)V_iZ_i^\top\right)\E\left(K_h(X_i)Z_iZ_i^\top\right)^{-1}\E(K_h(X_i)Z_iY_i)\Big).
\end{split}
\end{equation}
By applying Lemma \ref{lemmaA13} and Lemma \ref{lemmaA15} we obtain that
\begin{align*}
    &\Bigg[\bigg(I-\E\left(K_h(X_i)V_iV_i^\top\right)^{-1}\E\left(K_h(X_i)V_iZ_i^\top\right)\E\left(K_h(X_i)Z_iZ_i^\top\right)^{-1}\E\left(K_h(X_i)Z_iV_i^\top\right)\bigg)^{-1}\Bigg]_2\\
    &\left(\kappa(K)^{-1}\E\left(K_h(X_i)V_iV_i^\top\right)\right)^{-1}\\
    =\;&\frac{1}{f_X(0)}\left( \begin{pmatrix}0&1&0&0\end{pmatrix}+\kappa_{h,b}(K) \right)+o(h^2).
\end{align*}
We set $A=Y_i$ in Lemma \ref{lemmaA12} and obtain
\begin{equation*}
    \kappa(K)^{-1}\E(K_h(X_i)V_iY_i)=\begin{pmatrix}
    f_X(0)\mu_{Y-}\\f_X(0)\tauy\\h(\mu_Yf_X)'_{-}\\h((\mu_Yf_X)'_{+}-(\mu_Yf_X)'_{-})
    \end{pmatrix}+h^2B(K,Y)+O(h^3).
\end{equation*}
Also, we can apply this lemma with $A=Z_i^{(k)}$ for $1\leq k\leq p$:
\begin{equation}\label{taylorOfVZ}
\begin{split}
    &\kappa(K)^{-1}\E(K_h(X_i)V_iZ_i^\top)\\
    =\;&\kappa(K)^{-1}\E\left(K_h(X_i)V_i\left(\sum_{k=1}^p Z_i^{(k)}e_k^\top\right)\right)\\
    =\;&\sum_{k=1}^p \kappa(K)^{-1}\E\left(K_h(X_i)V_iZ_i^{(k)}\right)e_k^\top\\
    =\;&\sum_{k=1}^p\left(\begin{pmatrix}
    f_X(0)\mu_{Z^{(k)}-}\\f_X(0)\tau_{Z^{(k)}}\\h(\mu_{Z^{(k)}}f_X)'_{-}\\h((\mu_{Z^{(k)}}f_X)'_{+}-(\mu_{Z^{(k)}}f_X)'_{-})
    \end{pmatrix}+h^2B\left(K,Z_i^{(k)}\right)+O(h^3)\right)e_k^\top\\
    =\;&\sum_{k=1}^p \left(h^2B\left(K,Z_i^{(k)}\right)+O(h^3)\right)e_k^\top\\
    =\;&h^2B+O(h^3)
\end{split}
\end{equation}
whereby $B$ is defined as the matrix which has $B(K,Z_i^{(k)}), k=1,...,p$ as columns. Note that we used the assumptions $\mu_{Z^{(k)}}(0)=0$ as well as $\lim_{x\searrow 0} \mu'_{Z^{(k)}}(x)=\lim_{x\nearrow 0} \mu'_{Z^{(k)}}(x)=0$, which also implies $\tau_{Z^{(k)}}=0$. Now, we can state that
\begin{align*}
    &\kappa(K)^{-1}\E(K_h(X_i)V_iZ_i^\top)\E(K_h(X_i)Z_iZ_i^\top)^{-1}\E(K_h(X_i)Z_iY_i)\\
    =\;&h^2B\E(K_h(X_i)Z_iZ_i^\top)^{-1}\E(K_h(X_i)Z_iY_i)+O(h^3)\E(K_h(X_i)Z_iZ_i^\top)^{-1}\E(K_h(X_i)Z_iY_i).
\end{align*}
We want to find an upper bound for that. This can be done by noticing that
\begin{equation*}
    \norm{\E(K_h(X_i)Z_iZ_i^\top)^{-1}\E(K_h(X_i)Z_iY_i)}_2=O(1)
\end{equation*}
by Lemma \ref{lemmaA11}. This leads to
\begin{align*}
    &\left[\kappa(K)^{-1}\E(K_h(X_i)V_iZ_i^\top)\E(K_h(X_i)Z_iZ_i^\top)^{-1}\E(K_h(X_i)Z_iY_i)\right]_l\\
    \leq\;& h^2\norm{B_{l\cdot}}_2O(1)+\norm{(O(h^3))_{l\cdot}}_2O(1)=O(h^2)+O(h^3)=O(h^2)
\end{align*}
for $l\in\{1,...,4\}$. Now we merge all of the statements above to obtain
\begingroup
\allowdisplaybreaks
\begin{align*}
    &\theta_0^{(2)}(h)\\
    =\;&\left[ \frac{1}{f_X(0)}\left(\begin{pmatrix}0&1&0&0\end{pmatrix}+\kappa_{h,b}(K)\right)+o(h^2) \right]\\
    &\cdot \kappa(K)^{-1}\left(\E(K_h(X_i)V_iY_i)-\E(K_h(X_i)V_iZ_i^\top)\E(K_h(X_i)Z_iZ_i^\top)^{-1}\E(K_h(X_i)Z_iY_i)\right)\\
    =\;&\left[ \frac{1}{f_X(0)}\left(\begin{pmatrix}0&1&0&0\end{pmatrix}+\kappa_{h,b}(K)\right)+o(h^2) \right]\cdot \left[\rule{0cm}{1.3cm}\right. \begin{pmatrix}
    f_X(0)\mu_{Y-}\\f_X(0)\tauy\\h(\mu_Yf_X)'_{-}\\h((\mu_Yf_X)'_{+}-(\mu_Yf_X)'_{-})
    \end{pmatrix}+h^2B(K,Y)+O(h^3)\\
    &-h^2B\E(K_h(X_i)Z_iZ_i^\top)^{-1}\E(K_h(X_i)Z_iY_i)-\underbrace{O(h^3)\E(K_h(X_i)Z_iZ_i^\top)^{-1}\E(K_h(X_i)Z_iY_i)}_{=O(h^3)}\left]\rule{0cm}{1.3cm}\right. \\
    =\;&\tauy+\frac{h^2}{f_X(0)}[B(K,Y)]_2-\frac{h^2}{f_X(0)}B_{2\cdot}\E(K_h(X_i)Z_iZ_i^\top)^{-1}\E(K_h(X_i)Z_iY_i) \\
    &+\underbrace{[O(h^3)]_2}_{=O(h^3)}-\frac{1}{f_X(0)}\underbrace{\kappa_{h,b}(K)}_{=o(1)}(\underbrace{h^2B\E(K_h(X_i)Z_iZ_i^\top)^{-1}\E(K_h(X_i)Z_iY_i)+O(h^3)}_{=O(h^2)})\\
    &+\kappa_{h,b}(K)\begin{pmatrix}
    \mu_{Y-}\\ \tauy\\\frac{h}{f_X(0)}(\mu_Yf_X)'_{-}\\\frac{h}{f_X(0)}((\mu_Yf_X)'_{+}-(\mu_Yf_X)'_{-})
    \end{pmatrix}+\underbrace{\frac{\kappa_{h,b}(K)}{f_X(0)}h^2B(K,Y)}_{=o(h^2)}+\underbrace{\frac{\kappa_{h,b}(K)}{f_X(0)}O(h^3)}_{=o(h^2)}\\
    &+\underbrace{o(h^2)\begin{pmatrix}
    f_X(0)\mu_{Y-}\\f_X(0)\tauy\\h(\mu_Yf_X)'_{-}\\h((\mu_Yf_X)'_{+}-(\mu_Yf_X)'_{-})
    \end{pmatrix}}_{=o(h^2)}+\underbrace{o(h^2)h^2B(K,Y)}_{=o(h^2)}+\underbrace{o(h^2)O(h^2)}_{=o(h^2)} \\
    =\;&\tauy+\frac{h^2}{f_X(0)}[B(K,Y)]_2-\frac{h^2}{f_X(0)}B_{2\cdot}\E(K_h(X_i)Z_iZ_i^\top)^{-1}\E(K_h(X_i)Z_iY_i)\\
    &+\kappa_{h,b}(K)\begin{pmatrix}
    \mu_{Y-}\\ \tauy\\\frac{h}{f_X(0)}(\mu_Yf_X)'_{-}\\\frac{h}{f_X(0)}((\mu_Yf_X)'_{+}-(\mu_Yf_X)'_{-})
    \end{pmatrix}+o(h^2),
\end{align*}
\endgroup
whereby we used that $\kappa_{h,b}(K)=o(1)$ which is shown by taking the definition and letting $h\xrightarrow{}0$. The main part of the proof is done. The rest is conducted by just following some straightforward computations. As the next calculation is rather long and technical, we want to refer to using a CAS in order to obtain
\begin{equation}\label{kappahb}
    \kappa_{h,b}(K)=\begin{pmatrix}
    o(h^2)&h^2a_1\left(\frac{f_X'(0)^2}{f_X(0)^2}-\frac{f_X''(0)}{2f_X(0)}\right)+o(h^2)&O(h^2)&-ha_1\frac{f_X'(0)}{f_X(0)}+O(h^2)
    \end{pmatrix}.
\end{equation}
We now consider the result of Lemma \ref{lemmaA14} by examining the matrix multiplication column-wise, which gives us that
\begin{equation*}
    \kappa(K)^{-1}\begin{pmatrix}\Kpz\\\Kpz\\\Kpd\\\Kpd\end{pmatrix}=\begin{pmatrix}0\\a_1\\0\\a_2\end{pmatrix}
\end{equation*}
and
\begin{align*}
    \kappa(K)^{-1}\begin{pmatrix}K_-^{(2)}\\0\\K_-^{(3)}\\0\end{pmatrix}&=\kappa(K)^{-1}\begin{pmatrix}K_+^{(2)}\\0\\-K_+^{(3)}\\0\end{pmatrix}=\kappa(K)^{-1}\left[\begin{pmatrix}
    2\Kpz\\ \Kpz\\0\\ \Kpd
    \end{pmatrix}-\begin{pmatrix}
    \Kpz\\ \Kpz \\ \Kpd\\ \Kpd
    \end{pmatrix}\right]\\
    &=\begin{pmatrix}
    a_1\\0\\-a_2\\2a_2
    \end{pmatrix}-\begin{pmatrix}
    0\\a_1\\0\\a_2
    \end{pmatrix}=\begin{pmatrix}
    a_1\\-a_1\\-a_2\\a_2
    \end{pmatrix}
\end{align*}
whereby we used that $K_-^{(2)}=\Kpz$ and $K_-^{(3)}=-\Kpd$ since the kernel is symmetric. Replacing the above identities in the definition of $B(K,Y)$ delivers
\begin{align*}
    [B(K,Y)]_2=\frac{a_1}{2}\left(\mu''_{Y+}f_X(0)+2\mu'_{Y+}f'_X(0)+\mu_{Y+}f''_X(0)-\mu''_{Y-}f_X(0)-2\mu'_{Y-}f'_X(0)-\mu_{Y-}f''_X(0)\right).
\end{align*}
Therefore
\begin{align}
    &\frac{h^2}{f_X(0)}[B(K,Y)]_2+\kappa_{h,b}(K)\begin{pmatrix}
    \mu_{Y-}\\ \tauy\\\frac{h}{f_X(0)}(\mu_Yf_X)'_{-}\\\frac{h}{f_X(0)}((\mu_Yf_X)'_{+}-(\mu_Yf_X)'_{-}) 
    \end{pmatrix} \nonumber \\
    =\;&\frac{h^2}{f_X(0)}\frac{a_1}{2}\left( f_X(0)\left(\mu''_{Y+}-\mu''_{Y-}\right)+2f_X'(0)\left(\mu'_{Y+}-\mu'_{Y-}\right)+f_X''(0)\left(\mu_{Y+}-\mu_{Y-}\right) \right) \nonumber \\
    &+\underbrace{o(h^2)\mu_{Y-}}_{=o(h^2)}+h^2a_1\left(\frac{f_X'(0)^2}{f_X(0)^2}-\frac{f_X''(0)}{2f_X(0)}\right)\left(\mu_{Y+}-\mu_{Y-}\right)+\underbrace{o(h^2)\tauy}_{=o(h^2)} \nonumber \\
    &+\underbrace{O(h^2)\frac{h}{f_X(0)}(\mu_Yf_X)'_-}_{=o(h^2)}-ha_1\frac{f_X'(0)}{f_X(0)}\frac{h}{f_X(0)}\left((\mu_Yf_X)'_+-(\mu_Yf_X)'_-\right) \nonumber \\
    &+\underbrace{O(h^2)\frac{h}{f_X(0)}((\mu_Yf_X)'_+-(\mu_Yf_X)'_-)}_{=o(h^2)} \nonumber \\
    =\;&h^2a_1\bigg(\frac{1}{2}\left(\mu''_{Y+}-\mu''_{Y-}\right)+\frac{f_X'(0)}{f_X(0)}\left(\mu'_{Y+}-\mu'_{Y-}\right)+\frac{1}{2}\frac{f_X''(0)}{f_X(0)}\left(\mu_{Y+}-\mu_{Y-}\right) \nonumber \\
    &+\frac{f_X'(0)^2}{f_X(0)^2}\left(\mu_{Y+}-\mu_{Y-}\right)-\frac{1}{2}\frac{f_X''(0)}{f_X(0)}\left(\mu_{Y+}-\mu_{Y-}\right) \nonumber \\
    &-\frac{f_X'(0)}{f_X(0)^2}\left(\mu'_{Y+}f_X(0)+\mu_{Y+}f_X'(0)-\mu'_{Y-}f_X(0)-\mu_{Y-}f'_X(0)\right) \bigg)+o(h^2) \nonumber \\
    =\;&h^2\frac{a_1}{2}\left(\mu''_{Y+}-\mu''_{Y-}\right)+o(h^2). \label{eq:bias_Y}
\end{align}
Using a representation of $B(K,Z_i^{(k)})$ in the above manner again, gives that
\begin{align}
    &\frac{h^2}{f_X(0)}B_{2\cdot}\E\left(K_h(X_i)Z_iZ_i^\top\right)^{-1}\E(K_hZ_iY_i) \nonumber \\
    =\;&h^2\frac{a_1}{2}\sum_{k=1}^p \left(\mu''_{Z^{(k)}+}-\mu''_{Z^{(k)}-}\right)\left[\E\left(K_h(X_i)Z_iZ_i^\top\right)^{-1}\E(K_hZ_iY_i)\right]^{(k)} \label{eq:bias_Z}
\end{align}
since $\mu_{Z+}=\mu_{Z-}$ and $\mu'_{Z+}=\mu'_{Z-}$. Combining the above statements proves the proposition.
\end{proof}

\end{subsection}

\begin{subsection}{Conversion of the Bias Representation}\label{biasConversion}
The next proposition covers all calculations which are necessary to transform the representation of the bias obtained in Section \ref{biasrepresentation} into the one required in the main theorem's proof.

\begin{proposition}[Computation for the bias]\label{lemmaE}
Let the assumptions of Section \ref{secAssump} hold. Then
\begin{equation*}
\mu''_{Y+}-\mu''_{Y-}-\left(\mu''_{\tilde Z+}-\mu''_{\tilde Z-}\right)^\top\check\beta_n=\mu''_{\tilde Y+}-\mu''_{\tilde Y-}+o(1)
\end{equation*}
whereby
\begin{equation*}
\check\beta_n=\E\left(K_h(X_i)\Zit\tilde Z_i^{\top}\right)^{-1}\E\left(K_h(X_i)\Zit Y_i\right).
\end{equation*}
\end{proposition}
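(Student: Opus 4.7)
My strategy is to isolate a clean algebraic identity and then show that $\check\beta_n$ equals its natural limit $\tilde\gamma$ up to $o(1)$. First, by Definition \ref{covadjustproc} we have $\mu_{\tilde Y}(x)=\mu_Y(x)-\mu_Z(x)^\top\tilde\gamma$, so differentiating twice and taking one-sided limits at zero yields
$$\mu''_{\tilde Y+}-\mu''_{\tilde Y-}=\left(\mu''_{Y+}-\mu''_{Y-}\right)-\left(\mu''_{Z+}-\mu''_{Z-}\right)^\top\tilde\gamma.$$
A direct computation using the form of $M_n$ and $V_i$ shows that, viewed as a (deterministic) function of $X_i$, one has $M_n^\top V_i=\mu_Z(0)+\mu'_{Z+}X_i$ when $X_i>0$ and $M_n^\top V_i=\mu_Z(0)+\mu'_{Z-}X_i$ when $X_i<0$. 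Hence $M_n^\top V_i$ is affine on each half-line, its second derivative in $x$ vanishes off zero, and therefore $\mu''_{\tilde Z}(x)=\mu''_Z(x)$ for $x\neq 0$; in particular $\mu''_{\tilde Z\pm}=\mu''_{Z\pm}$. With these two identities in hand the claim reduces to proving that $\check\beta_n=\tilde\gamma+o(1)$, since then $(\mu''_{\tilde Z+}-\mu''_{\tilde Z-})^\top\check\beta_n=(\mu''_{Z+}-\mu''_{Z-})^\top\tilde\gamma+o(1)$ and substituting into the first display gives the desired expression.

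To obtain $\check\beta_n\to\tilde\gamma$, I will compute the limits of the numerator and denominator of its defining expression separately. Conditioning on $X_i$ and using $\tilde Z_i=Z_i-M_n^\top V_i$ gives
$$\E\!\left(\tilde Z_i\tilde Z_i^\top\mid X_i=x\right)=\mu_{ZZ^\top}(x)-\mu_Z(x)V_i^\top M_n-M_n^\top V_i\mu_Z(x)^\top+M_n^\top V_iV_i^\top M_n$$
and $\E(\tilde Z_iY_i\mid X_i=x)=\mu_{ZY}(x)-M_n^\top V_i\mu_Y(x)$. Substituting $x=hu$ one checks $M_n^\top V_i=\mu_Z(0)+O(h)$ uniformly on $u\in[-1,1]$, so both conditional expectations converge pointwise in $u$ to $\sigma^2_{Z\pm}$ and $\sigma^2_{ZY\pm}$ respectively, with the sign matching that of $u$; this uses continuity of $\mu_Z$ together with the one-sided limits of $\mu_{ZZ^\top}$, $\mu_Y$, and $\mu_{ZY}$ from Assumption \ref{assumption3}.

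A dominated convergence argument, with integrable majorant supplied by the boundedness of $K$ on its compact support $[-1,1]$, the continuity of $f_X$, and the conditional second-moment bounds in Assumption \ref{assumption6}, then yields
$$\E\!\left(K_h(X_i)\tilde Z_i\tilde Z_i^\top\right)\to\tfrac{f_X(0)}{2}\left(\sigma^2_{Z+}+\sigma^2_{Z-}\right),\quad \E\!\left(K_h(X_i)\tilde Z_iY_i\right)\to\tfrac{f_X(0)}{2}\left(\sigma^2_{ZY+}+\sigma^2_{ZY-}\right),$$
using $\int_0^\infty K=\int_{-\infty}^0 K=\tfrac12$ by symmetry of $K$. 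Since $\sigma^2_{Z+}+\sigma^2_{Z-}$ is invertible by Assumption \ref{assumption4} and matrix inversion is continuous at invertible points, the factors of $f_X(0)/2$ cancel and we arrive at $\check\beta_n\to(\sigma^2_{Z+}+\sigma^2_{Z-})^{-1}(\sigma^2_{ZY+}+\sigma^2_{ZY-})=\tilde\gamma$. Substituting back into the identity from the first paragraph, and noting that $\mu''_{Z+}-\mu''_{Z-}$ is a fixed finite vector so that multiplication by it preserves $o(1)$, completes the argument.

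The main technical obstacle will be the careful justification of the dominated convergence step: one must handle the $h$-dependence hidden in $\tilde Z_i$ through $M_n$ when producing the majorant for the integrands $K(u)\E(\tilde Z_i\tilde Z_i^\top\mid X_i=hu)f_X(hu)$ and $K(u)\E(\tilde Z_iY_i\mid X_i=hu)f_X(hu)$ on $u\in[-1,1]$. Once the bound $M_n^\top V_i=\mu_Z(0)+O(h)$ is combined with Assumption \ref{assumption6} and Cauchy--Schwarz, this uniform control is straightforward; everything else in the proof is algebra driven by the identity $\mu''_{\tilde Z\pm}=\mu''_{Z\pm}$ noted at the outset.
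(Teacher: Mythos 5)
Your argument is correct and follows the same overall reduction as the paper: both proofs use the identity $\mu''_{\tilde Y\pm}=\mu''_{Y\pm}-\mu''^\top_{Z\pm}\tilde\gamma$ together with $\mu''_{\tilde Z\pm}=\mu''_{Z\pm}$ (which you justify more explicitly, by observing that $M_n^\top V_i$ is affine on each half-line) to reduce the claim to $\check\beta_n=\tilde\gamma+o(1)$. Where you diverge is in the last step. The paper writes $\check\beta_n-\tilde\gamma=\E\big(K_h(X_i)\Zit\tilde Z_i^\top\big)^{-1}\E\big(K_h(X_i)\Zit(Y_i-\tilde Z_i^\top\tilde\gamma)\big)$, uses the $O(1)$ operator-norm bound from Assumption \ref{assumption4}, and shows via a first-order Taylor expansion that the second factor is $O(h)$ because its leading term $K_-^{(0)}f_X(0)(f_-+f_+)$ vanishes by the definition of $\tilde\gamma$; this yields the quantitative rate $\norm{\check\beta_n-\tilde\gamma}_2=O(h)$. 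You instead compute the limits of $\E\big(K_h(X_i)\Zit\tilde Z_i^\top\big)$ and $\E\big(K_h(X_i)\Zit Y_i\big)$ separately and invoke continuity of matrix inversion at the invertible limit $\tfrac{f_X(0)}{2}(\sigza+\sigzm)$; this only delivers $o(1)$, which is all the proposition asserts, but it loses the rate (which is harmless here, though the paper's version is the sharper statement). Two small corrections to your justification of the dominated convergence step: Assumption \ref{assumption2} does not assume $K$ is bounded, only integrable on $[-1,1]$, so the integrable majorant should be of the form $C\,K(u)$ with $C$ bounding the conditional expectations and $f_X$ near zero; and the uniform bound on $\E\big(\Zit Y_i\mid X_i=x\big)=\mu_{ZY}(x)-M_n^\top V_i(x)\,\mu_Y(x)$ comes from the local boundedness of $\mu_{ZY}$, $\mu_Y$ and $\mu_Z$ near zero guaranteed by Assumption \ref{assumption3}, not from Assumption \ref{assumption6} and Cauchy--Schwarz (which control $\Zit r_i(h)$ and $\Zit\tilde Z_i^\top$ but not $\Zit Y_i$ directly). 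With those fixes your proof is complete.
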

\begin{proof}
First, we know that
\begin{equation}\label{blue2}
    \mu''_Z=\mu''_{\tilde Z}.
\end{equation}
For this reason, we can also proof
\begin{equation*}
    \mu''_{Y+}-\mu''_{Y-}-\left(\mu''_{Z+}-\mu''_{Z-}\right)^\top\check\beta_n=\mu''_{\tilde Y+}-\mu''_{\tilde Y-}+o(1)
\end{equation*}
in order to show the lemma.
Since we know by definition that for $x\neq 0$
\begin{align*}
    \mu''_{\tilde Y}(x)&=\mu''_{Y-Z^\top\tilde\gamma}(x)\\
    &=\frac{\text{d}^2}{\text{d}x^2}\,\E\left(Y_i-Z_i^\top\tilde\gamma\mid X_i=x\right)\\
    &=\frac{\text{d}^2}{\text{d}x^2}\,\E(Y_i\mid X_i=x)-\frac{\text{d}^2}{\text{d}x^2}\,\E(Z_i^\top\mid X_i=x)\tilde\gamma\\
    &=\mu''_Y-\mu''_{Z^\top}\tilde\gamma,
\end{align*}
we obtain
\begin{equation*}
    \mu''_{\tilde Y+}-\mu''_{\tilde Y-}=\mu''_{Y+}-\mu''_{Y-}-\left(\mu''_{Z+}-\mu''_{Z-}\right)^\top\tilde\gamma.
\end{equation*}
Therefore it is enough to show that
\begin{equation*}
    \left(\mu''_{Z+}-\mu''_{Z-}\right)^\top\left(\check\beta_n-\tilde\gamma\right)=o(1).
\end{equation*}
Since the first factor is constant, we just have to evaluate the convergence of the second factor. Indeed,
\begin{align*}
    \norm{\check\beta_n-\tilde\gamma}_2&=\norm{\E\left(K_h(X_i)\Zit\tilde Z_i^{\top}\right)^{-1}\E\left(K_h(X_i)\Zit \left(Y_i-\tilde Z_i^\top\tilde\gamma\right)\right)}_2\\
    &\leq O(1)\norm{\E\left(K_h(X_i)\Zit \left(Y_i-\tilde Z_i^\top\tilde\gamma\right)\right)}_2
\end{align*}
whereby Assumption \ref{assumption4} is used. Now set
\begin{equation*}
    f(x)=\E\left(\Zit \left(Y_i-\tilde Z_i^\top\tilde\gamma\right)\;\,\vrule\; X_i=x\right),
\end{equation*}
then
\begin{equation*}
    \E\left(K_h(X_i)\Zit \left(Y_i-\tilde Z_i^\top\tilde\gamma\right)\right)=\E\left(\frac{1}{h}K\left(\frac{X_i}{h}\right)f(X_i)\right).
\end{equation*}
Note that $f(x)$ does not depend on $h$ and that we have
\begin{equation}\label{blue3}
\begin{split}
    f_+=\;&\mu_{\tilde Z Y+}-\mu_{\tilde Z\tilde Z^\top \tilde\gamma+}\\
    =\;&\mu_{(Z-M_n^\top V)Y+}-\mu_{(Z-M_n^\top V)(Z-M_n^\top V)^\top+}\tilde\gamma\\
    =\;&\mu_{ZY+}-\mu_{Z+}\mu_{Y+}+\mu_{Z+}\mu_{Y+}-\mu_{M_n^\top VY+}-(\mu_{ZZ^\top+}-\mu_{Z+}\mu_{Z^\top +})\tilde\gamma\\
    &+(-\mu_{Z+}\mu_{Z^\top +}+\mu_{M_n^\top V Z^\top+}+\mu_{Z V^\top M_n+}-\mu_{M_n^\top VV^\top M_n+})\tilde\gamma\\
    =\;&\sigma_{ZY+}^2-\sigma_{Z+}^2\tilde\gamma\\
    &+\mu_{Z+}\mu_{Y+}-\mu_{M_n^\top VY+}\\
    &+(-\mu_{Z+}\mu_{Z^\top +}+\mu_{M_n^\top V Z^\top+}+\mu_{Z V^\top M_n+}-\mu_{M_n^\top VV^\top M_n+})\tilde\gamma\\
    =\;&\sigma_{ZY+}^2-\sigma_{Z+}^2\tilde\gamma.
\end{split}
\end{equation}
Analogously we obtain $f_-=\sigma_{ZY-}^2-\sigma_{Z-}^2\tilde\gamma$, leading to $f_++f_-=0$.

We perform a Taylor expansion similarly to (\ref{taylorexpansion}), but this time just up to the first degree, which means
\begin{align*}
\E\left(K_h(X_i)\Zit \left(Y_i-\tilde Z_i^\top\tilde\gamma\right)\right)&=K_-^{(0)}f_-f_X(0)+K_+^{(0)}f_+f_X(0)+O(h)\\
&=K_-^{(0)}f_X(0)\underbrace{(f_-+f_+)}_{=0}+O(h)=O(h).
\end{align*}
Hence there is some constant $C>0$ such that
\begin{equation*}
    \norm{\check\beta_n-\tilde\gamma}_2\leq Ch\xrightarrow{n\to\infty}0,
\end{equation*}
which in turn implies the statement of the Proposition.
\end{proof}
    
\end{subsection}

\begin{subsection}{Convergence to the Standard Normal Distribution}\label{convergence}

This chapter lays the foundation for proving the convergence to the standard normal distribution in the main theorem on asymptotic normality of the covariate-adjusted estimator. The result of the following proposition is used by applying it on an adjusted version of the covariates, namely $\tilde Z_i=Z_i-M_n^\top V_i$. By doing that, some additional properties concerning the value of $\mu_{\tilde Z}$ and of the one-sided limits of $\mu'_{\tilde Z}$ in zero emerge, which are necessary to fulfill the assumptions of the proposition contained in this chapter.

As we need a matrix notation for our population data in order to write some of the steps in the proof of this section's proposition more compactly and concisely, the following definition will be highly beneficial.

\begin{definition}
Let $K$ be a kernel function, then we set
\begin{equation*}
    \Kh=\mathrm{diag}\left(\frac{1}{h}K\left(\frac{X_1}{h}\right),...,\frac{1}{h}K\left(\frac{X_n}{h}\right)\right)
\end{equation*}
and
\begin{equation*}
    \Kh^{\frac{1}{2}}=\mathrm{diag}\left(\sqrt{\frac{1}{h}K\left(\frac{X_1}{h}\right)},...,\sqrt{\frac{1}{h}K\left(\frac{X_n}{h}\right)}\right).
\end{equation*}
Furthermore, we define
\begin{align*}
    \Ybf=\begin{pmatrix}
        Y_1\\
        \vdots\\
        Y_n
    \end{pmatrix}\in\mathbb{R}^n,\quad \Zbf=\begin{pmatrix}
        Z_1^\top\\
        \vdots\\
        Z_n^\top
    \end{pmatrix}\in\mathbb{R}^{n\times p}, \quad\rbf(h)=\begin{pmatrix}
        r_1(h)\\
        \vdots \\
        r_n(h)
    \end{pmatrix}\in\mathbb{R}^n
\end{align*}
and
\begin{equation*}
    \Vbf=\begin{pmatrix}
        1 & T_1 & X_1/h & T_1X_1/h\\
        \vdots & \vdots & \vdots & \vdots \\
        1 & T_n & X_n/h & T_nX_n/h
    \end{pmatrix}\in\mathbb{R}^{n\times 4}.
\end{equation*}
\end{definition}

\begin{proposition}\label{theoremA1}
Let the assumptions in (\ref{cond1}) of Lemma \ref{propositionA7} hold. Further, let $K$ be a kernel which is compactly supported on $[-1,1]$ with $K^{(4)},(K^2)^{(0)}<\infty$ and suppose that $\E(K_h(X_i)Z_iZ_i^\top)$ is invertible with $\norm{\E(K_h(X_i)Z_iZ_i^\top)^{-1}}_2=O(1)$. Also, let $f_X$ be three times continuously differentiable in a neighborhood around zero with $f_X(0)>0$. Moreover, assume that $\mu_Z$ is continuous and three times one-sided differentiable at zero (cf. Definition \ref{defofonesideddiff}), whereby $\mu_Z(0)=0$ and $\lim_{x\searrow 0} \mu'_Z(x)=\lim_{x\nearrow 0} \mu'_Z(x)=0$. Furthermore, suppose that there is $\delta>0$ such that
\begin{equation*}
\sup_{n\in\mathbb{N}}\sup_{x\in[-h,h]} \E\left(|r_i(h)|^{2+\delta}\mid X_i=x\right) < \infty
\end{equation*}
as well as the existence of finite numbers $\sigma_l,\sigma_r>0$ such that
\begin{equation}\label{assump}
\begin{split}
&\lim_{n\to\infty}\sup_{\lambda\in [0,1]} |\E\left(r_i(h)^2\mid X_i=\lambda h\right)-\sigma_r^2|=0,\\
&\lim_{n\to\infty}\sup_{\lambda\in [0,1]} |\E\left(r_i(h)^2\mid X_i=-\lambda h\right)-\sigma_l^2|=0.
\end{split}
\end{equation}
Set $w=\left(\left[(f_X(0)\kappa(K))^{-1}\right]_{2\cdot}\right)^\top$ and define
\begin{equation*}
    \mathcal{S}_n^2=\frac{1}{h}\E\left(K\left(\frac{X_i}{h}\right)^2(w^\top V_i)^2r_i(h)^2\right).
\end{equation*}
Then,
\begin{equation*}
    \sqrt{\frac{nh}{\mathcal{S}_n^2}}\left(\hat{\tau}_h-\theta_{0}^{(2)}\right)\xrightarrow[]{d}\mathcal{N}(0,1).
\end{equation*}
\end{proposition}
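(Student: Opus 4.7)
The strategy is to combine weighted-least-squares algebra, a block-matrix inversion, and Lyapunov's Central Limit Theorem. First I would write the normal equations in matrix form and substitute $\Ybf = \Vbf\theta_0(h)+\Zbf\gamma_0(h)+\rbf(h)$ to obtain, via the standard block inversion formula,
\[
\thetanhat-\theta_0(h) \;=\; R\,\bigl(\Vbf^\top\Kh\rbf \;-\; \Vbf^\top\Kh\Zbf(\Zbf^\top\Kh\Zbf)^{-1}\Zbf^\top\Kh\rbf\bigr),
\]
where $R$ denotes the inverse of the Schur complement $\Vbf^\top\Kh\Vbf - \Vbf^\top\Kh\Zbf(\Zbf^\top\Kh\Zbf)^{-1}\Zbf^\top\Kh\Vbf$. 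Projecting onto the second coordinate isolates $\tauh - \theta_0^{(2)}$ as a linear functional of the weighted residuals.

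Next I would approximate the blocks. The assumption $\mu_Z(0)=0$ together with the vanishing of the one-sided derivatives of $\mu_Z$ at zero, combined with a first-order Taylor expansion, gives $\E(K_h(X_i)V_iZ_i^\top)=O(h^2)$. Concentration of the sample analogues (via Lemma \ref{propositionA7}) then provides $\tfrac{1}{n}\Vbf^\top\Kh\Vbf \to f_X(0)\kappa(K)$, $(\tfrac{1}{n}\Zbf^\top\Kh\Zbf)^{-1}=O_P(1)$, and $\tfrac{1}{n}\Vbf^\top\Kh\Zbf = O(h^2) + O_P(1/\sqrt{nh})$. The Schur correction therefore collapses to $O_P(h^4)+O_P(h^2/\sqrt{nh})+O_P(1/(nh))=o_P(1)$, so $nR = (f_X(0)\kappa(K))^{-1} + o_P(1)$ and hence $n\,e_2^\top R \to w^\top$ in probability. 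For the second piece in the bracket, $\tfrac{1}{n}\Zbf^\top\Kh\rbf$ has mean zero (by the population normal equations) and variance $O(1/(nh))$ per entry, hence is $O_P(1/\sqrt{nh})$. Multiplying factors, the entire covariate-induced correction contributes $O_P(h^2/\sqrt{nh}) + O_P(1/(nh))$ to $\tauh-\theta_0^{(2)}$, which is $o_P(\sqrt{1/(nh)})$ and therefore vanishes after the final scaling by $\sqrt{nh}/\Sn$.

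It then suffices to analyze the main term $w^\top \tfrac{1}{n}\Vbf^\top\Kh\rbf = \tfrac{1}{n}\sum_{i=1}^n \xi_{n,i}$ with $\xi_{n,i} := K_h(X_i)(w^\top V_i)r_i(h)$. The $\xi_{n,i}$ are i.i.d. with $\E\xi_{n,i}=0$ (from the first-order condition $\E(K_h(X_i)V_i r_i(h))=0$) and, by the very definition of $\Sn^2$, with $\mathrm{Var}(\xi_{n,i})=\Sn^2/h$. Assumption (\ref{assump}) together with $\sigma_l,\sigma_r>0$, the positivity of $f_X(0)$ and $(K^2)^{(0)}<\infty$ guarantees via dominated convergence that $\Sn^2$ tends to a strictly positive constant, so $\Sn$ is bounded and bounded away from zero. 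Lyapunov's condition reduces to bounding the ratio $n\,\E|\xi_{n,1}|^{2+\delta}/(n\Sn^2/h)^{(2+\delta)/2}$; a change of variables $u=X_i/h$, using compact support of $K$, the polynomial form of $V_i$, and the uniform $(2+\delta)$-moment bound on $r_i(h)$, yields $\E|\xi_{n,1}|^{2+\delta}=O(h^{-(1+\delta)})$, so the Lyapunov ratio is $O((nh)^{-\delta/2})\to0$. This delivers the desired convergence in distribution.

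The main obstacle is the rate bookkeeping in the block-inversion step: the deterministic $O(h^2)$ bias of $\E(K_h V Z^\top)$ must be combined with the stochastic $O_P(1/\sqrt{nh})$ fluctuations of the relevant sample matrices in such a way that every covariate-induced correction is of strictly smaller order than the leading term $w^\top\tfrac{1}{n}\Vbf^\top\Kh\rbf$, so that the final scaling by $\sqrt{nh}/\Sn$ still kills it. Once this is controlled, both the approximation $n\,e_2^\top R\to w^\top$ and the Lyapunov verification are routine consequences of Assumptions \ref{assumption6} and \ref{assumption7} and the moment structure of the kernel.
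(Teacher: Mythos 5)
Your proposal is correct and follows essentially the same route as the paper's proof: the same block-inversion/Schur-complement decomposition (which the paper phrases via the projection matrix $M=I-\Khfrac\Zbf(\Ztbf\Kh\Zbf)^{-1}\Ztbf\Khfrac$), the same concentration estimates showing the covariate-induced correction is $o_P(1/\sqrt{nh})$ (delegated in the paper to Lemma \ref{propositionA7} and Lemma \ref{lemmaA8}), and the same Lyapunov CLT applied to $\frac{1}{\sqrt{nh}}\sum_i K(X_i/h)(w^\top V_i)r_i(h)$ with the $(2+\delta)$-moment bound and the equi-continuity condition \eqref{assump} guaranteeing $\mathcal{S}_n^2$ converges to a positive limit. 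Your rate bookkeeping ($O_P(h^2/\sqrt{nh})+O_P(1/(nh))$ for the correction, $O(h^{-(1+\delta)})$ for the Lyapunov moment) is accurate and matches the paper's conclusions.
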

\begin{proof}
Denote by
\begin{equation*}
    M=I-\Khfrac \Zbf(\Ztbf \Kh \Zbf)^{-1}\Ztbf \Khfrac
\end{equation*}
the projection matrix. First note that
\begin{equation}\label{equalszero}
    M \Khfrac \Zbf=0
\end{equation}
by using the definition of $M$. Now, define $\bar X:=\Khfrac\begin{pmatrix}\Vbf&\Zbf\end{pmatrix}$ and $\bar Y:=\Khfrac\Ybf$. The linear least squares problem
\begin{equation*}
    (\thetanhat,\gammanhat)=\argmin_{(\theta, \gamma)\in\mathbb{R}^{p+4}} \sum_{i=1}^n K_h(X_i)\left(Y_i-V_i^\top\theta-Z_i^\top\gamma\right)^2
\end{equation*}
can be written as
\begin{equation*}
    \hat\beta=\argmin_{\beta\in\mathbb{R}^{p+4}}\, \norm{\bar Y-\bar X\beta}_2^2,
\end{equation*}
which has the solution
\begin{equation*}
    \hat\beta=(\thetanhat,\gammanhat)=(\bar X^\top \bar X)^{-1}\bar X^\top \bar Y.
\end{equation*}
We calculate this expression for $\hat\beta$. In fact, as we have
\begin{equation*}
    \bar X^\top \bar X=\begin{pmatrix}
        \Vbf^\top \Kh \Vbf & \Vbf^\top \Kh \Zbf\\
        \Ztbf\Kh\Vbf & \Ztbf\Kh\Zbf
    \end{pmatrix},
\end{equation*}
we obtain by applying the well-known formula for the inverse of block matrices that
\begin{equation}\label{eq1}
    (\bar X^\top \bar X)^{-1} = \begin{pmatrix}
        R^{-1} & -R^{-1}\Vbf^\top \Kh\Zbf(\Ztbf\Kh\Zbf)^{-1}\\
        *&*
    \end{pmatrix}
\end{equation}
whereby
\begin{align*}
    R:=\;&\Vbf^\top\Kh\Vbf-\Vbf^\top\Kh\Zbf(\Ztbf\Kh\Zbf)^{-1}\Ztbf\Kh\Vbf\\
    =\;&\Vbf^\top\Khfrac(I-\Khfrac \Zbf(\Ztbf \Kh \Zbf)^{-1}\Ztbf \Khfrac)\Khfrac\Vbf\\
    =\;&\Vbf^\top\Khfrac M\Khfrac\Vbf.
\end{align*}
Furthermore,
\begin{equation}\label{eq2}
    \bar X^\top\bar Y=\begin{pmatrix}
        \Vbf^\top \Kh\Ybf\\
        \Zbf^\top \Kh\Ybf
    \end{pmatrix}.
\end{equation}
Taking (\ref{eq1}) and (\ref{eq2}) together delivers
\begin{align*}
    \thetanhat&=(\Vbf^\top\Khfrac M\Khfrac\Vbf)^{-1}(\Vbf^\top K_h\Ybf - \Vbf^\top \Kh\Zbf(\Ztbf\Kh\Zbf)^{-1}\Zbf^\top K_h\Ybf)\\
    &=(\Vbf^\top\Khfrac M\Khfrac\Vbf)^{-1}(\Vbf^\top\Khfrac(I-\Khfrac \Zbf(\Ztbf \Kh \Zbf)^{-1}\Ztbf \Khfrac)\Khfrac\Ybf)\\
    &=(\Vbf^\top\Khfrac M\Khfrac\Vbf)^{-1}\Vbf^\top\Khfrac M \Khfrac\Ybf.
\end{align*}
Considering that
\begin{equation*}
    \Ybf=\rbf(h)+\Vbf\theta_0(h) + \Zbf\gamma_0(h)
\end{equation*}
finally delivers the representation
\begin{equation}\label{equa0}
\begin{split}
    \thetanhat=\;\,&(\Vbf^\top\Khfrac M\Khfrac\Vbf)^{-1}\Vbf^\top\Khfrac M \Khfrac(\rbf(h)+\Vbf\theta_0(h) + \Zbf\gamma_0(h))\\
    =\;\,&(\Vbf^\top\Khfrac M\Khfrac\Vbf)^{-1}\Vbf^\top\Khfrac M \Khfrac(\Zbf\gamma_0(h)+\rbf(h))\\
    &+\underbrace{(\Vbf^\top\Khfrac M\Khfrac\Vbf)^{-1}\Vbf^\top\Khfrac M \Khfrac \Vbf}_{=I}\theta_0(h)\\
    =\;\,&\theta_0(h)+(\Vbf^\top\Khfrac M\Khfrac\Vbf)^{-1}\Vbf^\top\Khfrac M \Khfrac(\Zbf\gamma_0(h)+\rbf(h))\\
    \overset{(\ref{equalszero})}{=}&\theta_0(h)+(\Vbf^\top\Khfrac M\Khfrac\Vbf)^{-1}\Vbf^\top\Khfrac M \Khfrac\rbf(h).
\end{split}
\end{equation}
By applying Lemma \ref{propositionA7}, we obtain
\begin{equation}\label{equa1}
    \Vbf^\top\Khfrac M \Khfrac\rbf(h)=\frac{1}{n}\Vbf^\top \Kh\rbf(h) + o_P\left(\frac{1}{\sqrt{nh}}\right)
\end{equation}
and
\begin{equation}\label{equa2}
    (\Vbf^\top\Khfrac M\Khfrac\Vbf)^{-1}=(f_X(0)\kappa(K)+o_P(1))^{-1}=(f_X(0)\kappa(K))^{-1}+o_P(1).
\end{equation}
Hence, inserting (\ref{equa1}) and (\ref{equa2}) into (\ref{equa0}), gives
\begin{equation}\label{toExpand}
    \thetanhat = \theta_0 + ((f_X(0)\kappa(K))^{-1}+o_P(1))\left(\frac{1}{n}\Vbf^\top \Kh\rbf(h) + o_P\left(\frac{1}{\sqrt{nh}}\right)\right).
\end{equation}
We know by Lemma \ref{lemmaA8}, whose assumption (\ref{rsquared}) is satisfied by (\ref{assump}), that
\begin{equation*}
   o_P(1)\frac{1}{n}\Vbf^\top\Kh\rbf(h)=o_P(1)O_P\left(\frac{1}{\sqrt{nh}}\right)=o_P\left(\frac{1}{\sqrt{nh}}\right).
\end{equation*}
Therefore, (\ref{toExpand}) expands to
\begin{equation*}
     \thetanhat=\theta_0 + (f_X(0)\kappa(K))^{-1}\frac{1}{n}\Vbf^\top \Kh\rbf(h) + (f_X(0)\kappa(K))^{-1}o_P\left(\frac{1}{\sqrt{nh}}\right) + o_P\left(\frac{1}{\sqrt{nh}}\right),
\end{equation*}
which, overall, leads to
\begin{equation}\label{secondrow}
    \sqrt{nh}\left( \thetanhat-\theta_0 \right) = (f_X(0)\kappa(K))^{-1}\sqrt{nh}\frac{1}{n}\Vbf^\top \Kh\rbf(h) + o_P(1).
\end{equation}
Since we are interested in the estimator of the average treatment effect, we need to study the second row of that expression. Please recall the definition of $w$ given in the assertion of the proposition. Then, the second row of (\ref{secondrow}) can be written as
\begin{equation*}
    \sqrt{nh}\left( \hat\tau_h - \theta_0^{(2)} \right)=\sqrt{nh}\frac{1}{n}w^\top\Vbf^\top\Kh\rbf(h) + o_P(1)=\frac{1}{\sqrt{nh}}\sum_{i=1}^n K\left( \frac{X_i}{h} \right)w^\top V_i r_i(h) + o_P(1).
\end{equation*}
Next, we want to use Lyapunov's Central Limit Theorem to argue that the result follows. Define $\nu(X_i/h):=w^\top V_i$. Then, the independent sequence of random variables is considered to be
\begin{equation*}
    \left(\frac{1}{\sqrt{nh}}K\left(\frac{X_i}{h} \right)\nu\left(\frac{X_i}{h}\right)r_i(h)\right)_{i=1,...,n}
\end{equation*}
whereby each of this random variables has expectation zero due to the definition of the residual. We calculate the variance $\mathcal{S}_n^2$ of the sum of the variables now:
\begin{align*}
    \mathcal{S}_n^2&=\mathrm{Var}\left(\frac{1}{\sqrt{nh}}\sum_{i=1}^n K\left(\frac{X_i}{h} \right)\nu\left(\frac{X_i}{h}\right)r_i(h)\right)\\
    &=\frac{1}{nh}\sum_{i=1}^n \E\left(K\left(\frac{X_i}{h} \right)^2\nu\left(\frac{X_i}{h}\right)^2r_i(h)^2\right)\\
    &=\frac{1}{h}\E\left(K\left(\frac{X_i}{h} \right)^2\nu\left(\frac{X_i}{h}\right)^2r_i(h)^2\right).
\end{align*}
Now we need to state that the required condition
\begin{equation}\label{reqCondCLT}
\lim_{n\to\infty} \frac{1}{\mathcal{S}_n^{2+\delta}}\sum_{i=1}^n \frac{1}{\left(\sqrt{nh}\right)^{2+\delta}}\E\left(K\left(\frac{X_i}{h} \right)^{2+\delta}\left|\nu\left(\frac{X_i}{h}\right)\right|^{2+\delta}\left|r_i(h)\right|^{2+\delta}\right)=0
\end{equation}
for Lyapunov's CLT is true. Indeed, there are constants $C_1, C_2>0$ and $\delta>0$ such that
\begin{align}
    &\frac{1}{h}\E\left(K\left(\frac{X_i}{h} \right)^2\nu\left(\frac{X_i}{h}\right)^2r_i(h)^2\right)\xrightarrow[n\to\infty]{}C_1,\label{C1}\\
    &\frac{1}{h}\E\left(K\left(\frac{X_i}{h} \right)^{2+\delta}\left|\nu\left(\frac{X_i}{h}\right)\right|^{2+\delta}\left|r_i(h)\right|^{2+\delta}\right)\leq C_2.\label{C2}
\end{align}
The convergence in (\ref{C1}) can be seen by
\begin{align*}
    &\lim_{n\to\infty} \frac{1}{h}\E\left(K\left(\frac{X_i}{h} \right)^2\nu\left(\frac{X_i}{h}\right)^2r_i(h)^2\right)\\
    =\;&\lim_{n\to\infty}\frac{1}{h}\E\left(K\left(\frac{X_i}{h} \right)^2\nu\left(\frac{X_i}{h}\right)^2\E(r_i(h)^2\mid X_i)\right)\\
    =\;&\lim_{n\to\infty}\int_{-\infty}^\infty K(y)^2\nu(y)^2\E(r_i(h)^2\mid X_i=yh)f_X(yh)\,dy\\
    =\;&\int_{-1}^1 K(y)^2\nu(y)^2\lim_{n\to\infty}\E(r_i(h)^2\mid X_i=yh)f_X(yh)\,dy\\
    =\;&\int_{-1}^0 K(y)^2\nu(y)^2\lim_{n\to\infty}\E(r_i(h)^2\mid X_i=yh)f_X(yh)\,dy\\
    &+\int_{0}^1 K(y)^2\nu(y)^2\lim_{n\to\infty}\E(r_i(h)^2\mid X_i=yh)f_X(yh)\,dy
\end{align*}
whereby we used the Theorem of Dominated Convergence to move the limit inside the integral. For both $1\geq y>0$ and $-1\leq y<0$, $\lim_{n\to\infty}\E(r_i(h)^2\mid X_i=yh)$ exists directly due to the assumptions stated in (\ref{assump}). Thus, the above expression converges to some $C_1>0$.\\
The upper bound in (\ref{C2}) is an immediate consequence of the assumption
\begin{equation*}
\sup_{n\in\mathbb{N}}\sup_{x\in[-h,h]}\;\E(|r_i(h)|^{2+\delta}\mid X_i=x)<\infty
\end{equation*}
since
\begin{align*}
    &\frac{1}{h}\E\left(K\left(\frac{X_i}{h} \right)^{2+\delta}\left|\nu\left(\frac{X_i}{h}\right)\right|^{2+\delta}\left|r_i(h)\right|^{2+\delta}\right)\\
    =\;&\int_{-1}^1 K(y)^{2+\delta}|\nu(y)|^{2+\delta}\E(|r_i(h)|^{2+\delta}\mid X_i=yh)f_X(yh)\,dy.
\end{align*}
Combining (\ref{C1}) and (\ref{C2}) gives
\begin{align*}
    &\frac{1}{\mathcal{S}_n^{2+\delta}}\sum_{i=1}^n \frac{1}{\left(\sqrt{nh}\right)^{2+\delta}}\E\left(K\left(\frac{X_i}{h} \right)^{2+\delta}\left|\nu\left(\frac{X_i}{h}\right)\right|^{2+\delta}\left|r_i(h)\right|^{2+\delta}\right)\\
    =\;&\sum_{i=1}^n \frac{n^{-\frac{2+\delta}{2}}h^{-\frac{2+\delta}{2}}\E\left(K\left(\frac{X_i}{h} \right)^{2+\delta}\left|\nu\left(\frac{X_i}{h}\right)\right|^{2+\delta}\left|r_i(h)\right|^{2+\delta}\right)}{\mathcal{S}_n^{2+\delta}}\\
    =\;& \frac{1}{n}\sum_{i=1}^n \frac{n^{-\frac{\delta}{2}}h^{-\frac{\delta}{2}}\frac{1}{h}\E\left(K\left(\frac{X_i}{h} \right)^{2+\delta}\left|\nu\left(\frac{X_i}{h}\right)\right|^{2+\delta}\left|r_i(h)\right|^{2+\delta}\right)}{\mathcal{S}_n^{2+\delta}}\\
    \leq\;& \frac{C_2(nh)^{-\frac{\delta}{2}}}{\left(\frac{C_1}{2}\right)^{\frac{2+\delta}{2}}}\xrightarrow[n\to\infty]{} 0\quad\text{(since $nh\to\infty$)}
\end{align*}
which is condition (\ref{reqCondCLT}) we wanted to show. Hence, as Lyapunov's CLT can be applied, we obtain
\begin{equation*}
    \frac{1}{\sqrt{nh\mathcal{S}_n^2}}\sum_{i=1}^n K\left(\frac{X_i}{h}\right)\nu\left(\frac{X_i}{h}\right)r_i(h)\xrightarrow[]{d}\mathcal{N}(0,1).
\end{equation*}
This shows the assertion.
\end{proof}

\end{subsection}

\begin{subsection}{Conversion of the Variance Representation}\label{varianceConversion}
In this section, we want to transform the representation of the variance provided in Section \ref{convergence} in order to obtain the one stated in the main theorem's assertion.

\begin{proposition}[Computation for the variance]\label{lemmaVar}
Let the assumptions of Section \ref{secAssump} hold. Then
\begin{equation*}
    \frac{1}{h}\E\left(K\left(\frac{X_i}{h}\right)^2(w^\top V_i)^2\check r_i(h)^2\right)+o(1)=\Sn^2
\end{equation*}
whereby $w=\left(\left[(f_X(0)\kappa(K))^{-1}\right]_{2\cdot}\right)^\top$.
\end{proposition}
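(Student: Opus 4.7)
The strategy is to evaluate the expectation by a change of variables, take the limit via dominated convergence, identify the limits of the conditional second moments of the residuals, and perform an explicit kernel integration. By \eqref{rcheckeqr} we may replace $\check r_i(h)$ with $r_i(h)$ throughout.

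Substituting $u = X_i/h$ and conditioning on $X_i$ yields
\begin{equation*}
\frac{1}{h}\E\!\left(K(X_i/h)^2 (w^\top V_i)^2 r_i(h)^2\right) = \int_{-1}^{1} K(u)^2 \nu(u)^2\, g(uh, h)\, f_X(uh)\,du,
\end{equation*}
with $g(x, h) := \E(r_i(h)^2 \mid X_i = x)$ and $\nu(u) := w_1 + w_2\Ind(u\geq 0) + u(w_3 + w_4\Ind(u\geq 0))$. Assumptions \ref{assumption5} and \ref{assumption6}, together with the compact support of $K$, supply an integrable dominating function; Assumption \ref{assumption7} gives $g(\lambda h, h)\to\sigma_r^2$ and $g(-\lambda h, h)\to\sigma_l^2$ uniformly for $\lambda\in[0,1]$. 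Dominated convergence then produces
\begin{equation*}
\frac{1}{h}\E\!\left(K(X_i/h)^2 (w^\top V_i)^2 r_i(h)^2\right) = f_X(0)\bigl[\sigma_r^2 I_+ + \sigma_l^2 I_-\bigr] + o(1),
\end{equation*}
where $I_+ := \int_0^1 K(u)^2\nu(u)^2\,du$ and $I_- := \int_{-1}^0 K(u)^2\nu(u)^2\,du$.

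The next step is to identify $\sigma_r^2 = \sigyta$ (the negative side is symmetric). Writing $r_i(h) = \tilde Y_i - V_i^\top\theta_0(h) - Z_i^\top(\gamma_0(h)-\tilde\gamma)$ and applying the law of total variance,
\begin{equation*}
g(x,h) = \mathrm{Var}\!\left(Y_i - Z_i^\top\gamma_0(h)\,\middle|\, X_i = x\right) + \bigl(\mu_Y(x) - V_i^\top\theta_0(h) - \mu_Z(x)^\top\gamma_0(h)\bigr)^2.
\end{equation*}
The arguments in the proof of Proposition \ref{lemmaE} combined with \eqref{rcheckeqr} give $\gamma_0(h)\to\tilde\gamma$ at rate $O(h)$, and the one-sided limits in Assumption \ref{assumption3} imply that the conditional variance at $x = \lambda h$ with $\lambda > 0$ converges to $\sigyta$. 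For the squared-bias term, applying Lemma \ref{lemmaA12} with $A = \tilde Y$ to the normal equations for $(\theta_0,\gamma_0)$ yields $\theta_0^{(1)} + \theta_0^{(2)} \to \mu_{\tilde Y+}$ and $\theta_0^{(3)}, \theta_0^{(4)} = O(h)$, so that $V_i^\top\theta_0(h)\to\mu_{\tilde Y+}$ whenever $X_i/h = \lambda > 0$; the bias thus vanishes in the limit.

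Finally, the kernel integrals are explicit. Using $K_-^{(\alpha)} = (-1)^\alpha K_+^{(\alpha)}$ and $K_+^{(0)} = 1/2$, the block structure of $\kappa(K)$ induced by the change of basis $V_i \mapsto (1-T_i,\, T_i,\, (1-T_i)X_i/h,\, T_i X_i/h)$ gives
\begin{equation*}
[\kappa(K)^{-1}]_{2\cdot} = \frac{1}{D_+}\bigl(-K_+^{(2)},\; 2K_+^{(2)},\; -K_+^{(1)},\; 0\bigr),\qquad D_+ := K_+^{(0)}K_+^{(2)} - (K_+^{(1)})^2,
\end{equation*}
so that $\nu(u) = (K_+^{(2)} - uK_+^{(1)})/(f_X(0)D_+)$ for $u>0$. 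Expanding the square, using $D_+^2 = ((K_+^{(1)})^2 - K_+^{(2)}/2)^2$, and exploiting $K(-u)=K(u)$ to see $I_- = I_+$, one obtains
\begin{equation*}
f_X(0)\,I_\pm = \frac{(K^2)_+^{(0)}(K_+^{(2)})^2 - 2(K^2)_+^{(1)}K_+^{(2)}K_+^{(1)} + (K^2)_+^{(2)}(K_+^{(1)})^2}{f_X(0)\,D_+^2} = \frac{\Cs}{f_X(0)},
\end{equation*}
which, combined with the previous step, delivers $\frac{\Cs}{f_X(0)}(\sigyta+\sigytm) = \Sn^2$. The main technical obstacle is the identification step: establishing $V_i^\top\theta_0(h)\to\mu_{\tilde Y+}$ requires rerunning the Taylor-expansion machinery of Proposition \ref{ref1} for the covariate-adjusted response $\tilde Y$, which is not recorded as a separate lemma but follows immediately from the same computations.
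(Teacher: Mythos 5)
Your proposal follows essentially the same route as the paper: substitute $u=X_i/h$, pass to the limit by dominated convergence using Assumption \ref{assumption7}, identify $\sigma_r^2=\sigyta$ and $\sigma_l^2=\sigytm$, and evaluate the kernel integrals explicitly via the second row of $\kappa(K)^{-1}$ (your formula for $[\kappa(K)^{-1}]_{2\cdot}$ and the resulting $\nu(u)$ agree with Lemma \ref{lemmaA14}, and $I_-=I_+$ by symmetry is correct). The one place you should be more careful is the identification step. You justify $\gamma_0(h)=\tilde\gamma+O(h)$ by appeal to ``the arguments in the proof of Proposition \ref{lemmaE}'', but that proposition concerns $\check\beta_n=\E\left(K_h(X_i)\Zit\tilde Z_i^{\top}\right)^{-1}\E\left(K_h(X_i)\Zit Y_i\right)$, which is not the same object as $\gamma_0(h)$: the latter is the covariate block of the \emph{joint} least squares problem in $(V_i,Z_i)$, and the paper derives its limit separately via the block-inverse formula with the roles of $V_i$ and $Z_i$ interchanged, Taylor-expanding $\E(K_h(X_i)Z_iZ_i^\top)$, $\E(K_h(X_i)Z_iV_i^\top)$, etc., to arrive at $\left(\sigma^2_{Z+}+\sigma^2_{Z-}\right)^{-1}\left(\sigma^2_{ZY+}+\sigma^2_{ZY-}\right)+O(h)$. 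The computation is of the same Taylor-expansion type, so this is a citation gap rather than a wrong idea, but as written the step is not covered by the lemma you invoke. Your organization of the identification via the conditional variance--bias decomposition of $\E(r_i(h)^2\mid X_i=x)$ is a slightly cleaner packaging than the paper's term-by-term expansion in \eqref{riyi}, and you correctly flag that the convergence of $V_i^\top\theta_0(h)$ to $\mu_{\tilde Y+}$ (resp.\ $\mu_{\tilde Y-}$) needs the same machinery as Proposition \ref{ref1}; the paper records exactly this in \eqref{convEq}--\eqref{convThetazero}.
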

\begin{proof}
We conclude in two steps. First, we show that
\begin{equation}\label{stepOne}
    \frac{1}{h}\E\left(K\left(\frac{X_i}{h}\right)^2(w^\top V_i)^2\check r_i(h)^2\right)\xrightarrow{h\to 0}\frac{\mathcal{C_S}}{f_X(0)}\left(\sigma_l^2+\sigma_r^2\right),
\end{equation}
whereby $\sigma_l^2$ and $\sigma_r^2$ are the finite numbers provided by Assumption \ref{assumption7}. Indeed,
\begin{align*}
    &\frac{1}{h}\E\left(K\left(\frac{X_i}{h}\right)^2(w^\top V_i)^2\check r_i(h)^2\right)\\
    =\;&\frac{1}{h}\E\left(K\left(\frac{X_i}{h}\right)^2(w^\top V_i)^2\E\left(\check r_i(h)^2\mid X_i\right)\right)\\
    =\;&\int_{-\infty}^\infty \frac{1}{h}K\left(\frac{x}{h}\right)^2\left(w^\top\begin{pmatrix}
        1 & \ind & \frac{x}{h} & \frac{x}{h}\ind
    \end{pmatrix}^\top\right)^2\mu_{\check r_i(h)^2}(x)f_X(x)\;dx\\
    =\;&\int_{-\infty}^\infty K(y)^2\frac{1}{f_X(0)^2}\left([\kappa(K)^{-1}]_2\begin{pmatrix}
        1 & \mathds{1}(yh\geq 0) & y & y\mathds{1}(yh\geq 0)
    \end{pmatrix}^\top\right)^2\\
    &\quad\quad\;\mu_{\check r_i(h)^2}(yh)f_X(yh)\;dy\\
    =\;&\int_{-1}^0 K(y)^2\frac{1}{f_X(0)^2}\left([\kappa(K)^{-1}]_2\begin{pmatrix}
        1 & 0 & y & 0
    \end{pmatrix}^\top\right)^2\mu_{\check r_i(h)^2}(yh)f_X(yh)\;dy\\
    &+ \int_{0}^1 K(y)^2\frac{1}{f_X(0)^2}\left([\kappa(K)^{-1}]_2\begin{pmatrix}
        1 & 1 & y & y
    \end{pmatrix}^\top\right)^2\mu_{\check r_i(h)^2}(yh)f_X(yh)\;dy\\
    \xrightarrow{h\to 0}&\int_{-1}^0 K(y)^2\frac{1}{f_X(0)}\left([\kappa(K)^{-1}]_2\begin{pmatrix}
        1 & 0 & y & 0
    \end{pmatrix}^\top\right)^2\sigma_l^2\;dy\\
    &+ \int_{0}^1 K(y)^2\frac{1}{f_X(0)}\left([\kappa(K)^{-1}]_2\begin{pmatrix}
        1 & 1 & y & y
    \end{pmatrix}^\top\right)^2\sigma_r^2\;dy
\end{align*}
whereby the limit is moved inside the integral by the Theorem of Dominated Convergence. Also, we used that
\begin{equation*}
    \mu_{\check r_i(h)^2}(-yh)\xrightarrow[n\to\infty]{}\sigma_l^2\quad\text{ and }\quad\mu_{\check r_i(h)^2}(yh)\xrightarrow[n\to\infty]{}\sigma_r^2
\end{equation*}
for all $y\in [0,1]$ since $\check r_i(h)=r_i(h)$ and by the choice of $\sigma_l$ and $\sigma_r$ in Assumption \ref{assumption7}.

Next, we use the representation of $\kappa(K)^{-1}$ as stated in Lemma \ref{lemmaA14} and obtain
\begin{align*}
    \int_{-\infty}^0 K(y)^2\frac{1}{f_X(0)}\left([\kappa(K)^{-1}]_2\begin{pmatrix}
        1 & 0 & y & 0
    \end{pmatrix}^\top\right)^2\sigma_l^2\;dy=\frac{\mathcal{C_S}}{f_X(0)}\sigma_l^2
\end{align*}
and
\begin{align*}
\int_{0}^\infty K(y)^2\frac{1}{f_X(0)}\left([\kappa(K)^{-1}]_2\begin{pmatrix}
        1 & 1 & y & y
    \end{pmatrix}^\top\right)^2\sigma_r^2\;dy=\frac{\mathcal{C_S}}{f_X(0)}\sigma_r^2.
\end{align*}
Therefore, the convergence in (\ref{stepOne}) is proved. We proceed with the second step and want to show that
\begin{equation}\label{stepTwo}
    \sigma^2_{\tilde Y+}=\sigma_r^2\quad\text{and}\quad\sigma^2_{\tilde Y-}=\sigma_l^2.
\end{equation}
We do this by first using the representation of $\thetazerocheck(h)$ as stated in (\ref{repofthetazerocheck}). We have
\begin{equation}\label{convEq}
\begin{split}
    \thetazerocheck(h)=\;&\Big(I-\E\left(K_h(X_i)V_iV_i^\top\right)^{-1}\E\left(K_h(X_i)V_i\tilde Z_i^\top\right)\E\left(K_h(X_i)\tilde Z_i\tilde Z_i^\top\right)^{-1}\\&\E\left(K_h(X_i)\tilde Z_iV_i^\top\right)\Big)^{-1}\left(\kappa(K)^{-1}\E\left(K_h(X_i)V_iV_i^\top\right)\right)^{-1}\\
    &\kappa(K)^{-1}\bigg(\E(K_h(X_i)V_iY_i)-\E\left(K_h(X_i)V_i\tilde Z_i^\top\right)\E\left(K_h(X_i)\tilde Z_i\tilde Z_i^\top\right)^{-1}\\
    &\E(K_h(X_i)\tilde Z_iY_i)\bigg)\\
    =\;&\left(I-\left(\frac{1}{f_X(0)}\kappa(K)^{-1}+o(1)\right)\cdot o(1)\right)^{-1}\cdot \left(\frac{1}{f_X(0)}I+o(1)\right)\\
    &\cdot \left(\begin{pmatrix}
        f_X(0)\mu_{Y-} \\
        f_X(0)\tauy \\
        h[\mu_Yf_X]'_- \\
        h\left([\mu_Yf_X]'_+-[\mu_Yf_X]'_-\right)
    \end{pmatrix}+o(1)\right)=\begin{pmatrix}
        \mu_{Y-}\\
        \tauy \\
        0 \\
        0
    \end{pmatrix}+o(1)
\end{split}
\end{equation}
whereby we used Lemma \ref{lemmaA12}, Lemma \ref{lemmaA15} and Lemma \ref{lemmaA11} together with the fact that $\mu_{\tilde Z}(0)=\mu'_{\tilde Z}(0)=0$ to obtain the above convergence.

Next, we want to state that
\begin{equation}\label{repofgammazero}
    \gamma_0(h)=\tilde\gamma+O(h).
\end{equation}
To show this we use the representation given in (\ref{repofthetazerocheck}) again, but this time we interchange the roles of $V_i$ and $Z_i$ in order to represent $\gamma_0(h)$ instead of $\theta_0(h)$. Indeed, we obtain
\begin{equation}\label{gammazero}
\begin{split}
    &\gamma_0(h)\\
    =&\left(\E(K_h(X_i)Z_iZ_i^\top)-\E(K_h(X_i)Z_iV_i^\top)\E(K_h(X_i)V_iV_i^\top)^{-1}\E(K_h(X_i)V_iZ_i^\top)\right)^{-1}\\
    &\left(\E(K_h(X_i)Z_iY_i)-\E(K_h(X_i)Z_iV_i^\top)\E(K_h(X_i)V_iV_i^\top)^{-1}\E(K_h(X_i)V_iY_i)\right)\\
    =\;&\Big(\E(K_h(X_i)Z_iZ_i^\top)-\E(K_h(X_i)Z_iV_i^\top)\left(\kappa(K)^{-1}\E(K_h(X_i)V_iV_i^\top)\right)^{-1}\\
    &\kappa(K)^{-1}\E(K_h(X_i)V_iZ_i^\top)\Big)^{-1}\\
    &\Big(\E(K_h(X_i)Z_iY_i)-\E(K_h(X_i)Z_iV_i^\top)\left(\kappa(K)^{-1}\E(K_h(X_i)V_iV_i^\top)\right)^{-1}\\
    &\kappa(K)^{-1}\E(K_h(X_i)V_iY_i)\Big).
\end{split}
\end{equation}
We begin by examining the first factor. We have
\begin{align*}
    \E(K_h(X_i)Z_iZ_i^\top)&=\E(K_h(X_i)\E(Z_iZ_i^\top\mid X_i))\\
    &=\Kmn\mu_{ZZ^\top-}f_X(0)+\Kpn\mu_{ZZ^\top+}f_X(0)+O(h)\\
    &=\frac{f_X(0)}{2}\left(\mu_{ZZ^\top+}+\mu_{ZZ^\top-}\right)+O(h)
\end{align*}
by performing Taylor expansion as in (\ref{taylorexpansion}),
\begin{align*}
    \E(K_h(X_i)Z_iV_i^\top)&=\E\left(K_h(X_i)Z_i\begin{pmatrix}
        1 & T_i & \frac{X_i}{h} & \frac{T_iX_i}{h}
    \end{pmatrix}\right)\\
    &=\begin{pmatrix}
        \E(K_h(X_i)\E(Z_i\mid X_i)) & * & * & *
    \end{pmatrix}\\
    &=\begin{pmatrix}
        \frac{f_X(0)}{2}\left(\mu_{Z+}+\mu_{Z-}\right)+O(h) & * & * & *
    \end{pmatrix}
\end{align*}
also by Taylor expansion according to (\ref{taylorexpansion}),
\begin{align*}
    \kappa(K)^{-1}\E(K_h(X_i)V_iV_i^\top)=f_X(0)I+O(h)
\end{align*}
by Lemma \ref{lemmaA15}, and finally
\begin{align*}
    \kappa(K)^{-1}\E(K_h(X_i)V_iZ_i^\top)=\begin{pmatrix}
        f_X(0)\mu^\top_{Z-} \\ 0 \\ 0 \\ 0
    \end{pmatrix}+O(h)
\end{align*}
by applying Lemma \ref{lemmaA12} component-wise and the fact that $\mu_{Z+}=\mu_{Z-}$ due to the continuity of $\mu_Z$. Taking all the above statements together gives
\begin{align*}
    &\E(K_h(X_i)Z_iZ_i^\top)-\E(K_h(X_i)Z_iV_i^\top)\left(\kappa(K)^{-1}\E(K_h(X_i)V_iV_i^\top)\right)^{-1}\\
    &\kappa(K)^{-1}\E(K_h(X_i)V_iZ_i^\top)\\
    =\;&\frac{f_X(0)}{2}\left(\mu_{ZZ^\top+}+\mu_{ZZ^\top-}\right)-\begin{pmatrix}
        \frac{f_X(0)}{2}\left(\mu_{Z+}+\mu_{Z-}\right) & * & * & *
    \end{pmatrix}\frac{1}{f_X(0)}\begin{pmatrix}
        f_X(0)\mu^\top_{Z-} \\ 0 \\ 0 \\ 0
    \end{pmatrix}\\
    &+O(h)\\
    =\;&\frac{f_X(0)}{2}\left(\mu_{ZZ^\top+}+\mu_{ZZ^\top-}-\mu_{Z+}\mu^\top_{Z+}-\mu_{Z-}\mu^\top_{Z-}\right)+O(h)\\
    =\;&\frac{f_X(0)}{2}\left(\sigma^2_{Z+}+\sigma^2_{Z-}\right)+O(h)
\end{align*}
whereby we used again that $\mu^\top_{Z+}=\mu^\top_{Z-}$. In an analogous way, we obtain for the second factor
\begin{align*}
    &\E(K_h(X_i)Z_iY_i)-\E(K_h(X_i)Z_iV_i^\top)\left(\kappa(K)^{-1}\E(K_h(X_i)V_iV_i^\top)\right)^{-1}\kappa(K)^{-1}\E(K_h(X_i)V_iY_i)\\
    =\;& \frac{f_X(0)}{2}\left(\sigma^2_{ZY+}+\sigma^2_{ZY-}\right)+O(h).
\end{align*}
Referring back to equation (\ref{gammazero}) and inserting the respective terms, we overall obtain
\begin{equation}\label{convGammazero}
\begin{split}
    \gamma_0(h)&=\left(\frac{f_X(0)}{2}\left(\sigma^2_{Z+}+\sigma^2_{Z-}\right)+O(h)\right)^{-1}\left(\frac{f_X(0)}{2}\left(\sigma^2_{ZY+}+\sigma^2_{ZY-}\right)+O(h)\right)\\
    &=\left(\sigma^2_{Z+}+\sigma^2_{Z-}\right)^{-1}\left(\sigma^2_{ZY+}+\sigma^2_{ZY-}\right)+O(h)\\
    &=\tilde\gamma+O(h).
\end{split}
\end{equation}
We can use this equality in order to get
\begingroup\allowdisplaybreaks
\begin{align}\label{convThetazero}
    &\quad\;\theta_0(h)-\begin{pmatrix}
        \mu_{\tilde Y-} \\
        \mu_{\tilde Y+}-\mu_{\tilde Y-} \\
        0 \\
        0
    \end{pmatrix}\nonumber\\
    &=\thetazerocheck(h)-M_n\gamma_0(h)-\begin{pmatrix}
        \mu_{Y-}-\mu^\top_{Z-}\tilde\gamma \\
        \tau_Y \\
        0 \\
        0
    \end{pmatrix}\nonumber\\
    &=\thetazerocheck(h)-\begin{pmatrix}
        \mu_Z(0)^\top \\
        0 \\
        h\mu'^\top_{Z-} \\
        h(\mu'^\top_{Z+}-\mu'^\top_{Z-})
    \end{pmatrix}(\tilde\gamma+O(h))-\begin{pmatrix}
        \mu_{Y-}-\mu^\top_{Z-}\tilde\gamma \\
        \tau_Y \\
        0 \\
        0
    \end{pmatrix}\nonumber\\
    &=\thetazerocheck(h)-\begin{pmatrix}
        \mu_Z(0)^\top(\tilde\gamma+O(h)) \\
        0 \\
        h\mu'^\top_{Z-}(\tilde\gamma+O(h)) \\
        h(\mu'^\top_{Z+}-\mu'^\top_{Z-})(\tilde\gamma+O(h))
    \end{pmatrix}-\begin{pmatrix}
        \mu_{Y-}-\mu^\top_{Z-}\tilde\gamma \\
        \tau_Y \\
        0 \\
        0
    \end{pmatrix}\nonumber\\
    &\xrightarrow[h\to 0]{(\ref{convEq})}\begin{pmatrix}
        \mu_{Y-} \\
        \tau_Y \\
        0\\
        0
    \end{pmatrix}-\begin{pmatrix}
        \mu^\top_{Z-}\tilde\gamma \\
        0 \\
        0 \\
        0
    \end{pmatrix}-\begin{pmatrix}
        \mu_{Y-}-\mu^\top_{Z-}\tilde\gamma \\
        \tau_Y \\
        0 \\
        0
    \end{pmatrix}=0.
\end{align}
\endgroup
Next, we have
\begin{equation}\label{riyi}
\begin{split}
    &\lim_{x\searrow 0} \E(r_i^2(h)\mid X_i=x)-\lim_{x\searrow 0} \mathrm{Var}(\tilde Y_i\mid X_i=x)\\
    =\;&\lim_{x\searrow 0} \Big(\E\left((Y_i-V_i^\top\theta_0(h)-Z_i^\top\gamma_0(h))^2\mid X_i=x\right)- \E\left((Y_i-Z_i^\top\tilde\gamma)^2\mid X_i=x\right)\\
    &+\E\left(Y_i-Z_i^\top\tilde\gamma\mid X_i=x\right)^2\Big)\\
    =\;&\lim_{x\searrow 0}\Big(\E\left(Y_i^2\mid X=x\right)-2\E\left(Y_iV_i^\top\theta_0(h)\mid X_i=x\right)-2\E\left(Y_iZ_i^\top\gamma_0(h)\mid X_i=x\right)\\
    &+\E\left((V_i^\top\theta_0(h))^2\mid X_i=x\right)
    +2\E\left(V_i^\top\theta_0(h)Z_i^\top\gamma_0(h)\mid X_i=x\right)\\
    &+2\E\left((Z_i^\top\gamma_0(h))^2\mid X_i=x\right)
    -\E\left(Y_i^2\mid X_i=x\right)+\E\left(Y_i\mid X_i=x\right)^2\\
    &-\E\left((Z_i^\top\tilde\gamma)^2\mid X_i=x\right)+\E\left(Z_i^\top\tilde\gamma\mid X_i=x\right)^2
    +2\E\left(Y_iZ_i^\top\tilde\gamma\mid X_i=x\right)\\
    &-2\E\left(Y_i\mid X_i=x\right)\E\left(Z_i^\top\tilde\gamma\mid X_i=x\right)
    \Big)\\
    =\;&\theta_0(h)^\top\mu_{VV^\top+}\theta_0(h)-\left(\mu_{Y+}-\mu^\top_{Z+}\tilde\gamma\right)^2\\
    &+\gamma_0(h)^\top\mu_{ZZ^\top+}\gamma_0(h)-\gamma^\top_n\mu_{ZZ^\top+}\tilde\gamma\\
    &-2\mu_{YV^\top+}\theta_0(h)+2\mu^2_{Y+}-2\mu^\top_{Z+}\mu_{Y+}\tilde\gamma\\
    &-2\mu_{YZ^\top+}\gamma_0(h)+2\mu^\top_{YZ^\top+}\tilde\gamma\\
    &+2\theta_0(h)^\top\mu_{VZ^\top+}\gamma_0(h)+2\tilde\gamma^\top\mu_{Z+}\mu^\top_{Z+}\tilde\gamma-2\mu_{Y+}\mu^\top_{Z+}\tilde\gamma.
\end{split}
\end{equation}
Each line of the above expression, considered separately, converges to zero for $h\to 0$ by using (\ref{convGammazero}) as well as (\ref{convThetazero}), leading to the whole expression converging to zero. Therefore, we can conclude from equation (\ref{riyi}) that
\begin{equation*}
    \lim_{x\searrow 0} \E(r_i^2(h)\mid X_i=x)\xrightarrow[n\to\infty]{}\lim_{x\searrow 0}\mathrm{Var}\left(\tilde Y_i\mid X_i=x\right).
\end{equation*}
Completely analogously, we also obtain
\begin{equation*}
    \lim_{x\nearrow 0} \E(r_i^2(h)\mid X_i=x)\xrightarrow[n\to\infty]{}\lim_{x\nearrow 0}\mathrm{Var}\left(\tilde Y_i\mid X_i=x\right).
\end{equation*}
By definition of $\sigma_l$ and $\sigma_r$, we can follow that
\begin{equation*}
    \sigma^2_{\tilde Y+}=\sigma_r^2\quad\text{ and }\quad\sigma^2_{\tilde Y-}=\sigma_l^2,
\end{equation*}
which is exactly statement (\ref{stepTwo}) that we wanted to show. Hence, the whole assertion is proved.
\end{proof}
\end{subsection}

\newpage
\begin{appendices}
\section{Computing the Representation of the Bias}

\begin{lemma}\label{matrixinv}
Let $K$ be a kernel with $K^{(2)}<\infty$. Then the matrix
\begin{equation*}
    \kappa(K)=\begin{pmatrix}
    \Kn&\Kpn&\Ke&\Kpe\\
    \Kpn&\Kpn&\Kpe&\Kpe\\
    \Ke&\Kpe&\Kz&\Kpz\\
    \Kpe&\Kpe&\Kpz&\Kpz
    \end{pmatrix}
\end{equation*}
is invertible.
\end{lemma}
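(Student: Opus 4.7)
The plan is to exhibit $\kappa(K)$ as a Gram matrix with respect to the positive measure $K(u)\,du$ and then rule out degeneracy by a parity argument on each side of zero. Concretely, I would set $v(u) := (1,\,\mathds{1}(u\geq 0),\,u,\,u\,\mathds{1}(u\geq 0))^\top$ and verify entry by entry that
\begin{equation*}
\kappa(K) = \int_{-\infty}^{\infty} K(u)\,v(u)v(u)^\top\,du.
\end{equation*}
Every entry is finite because $K$ is non-negative with $K^{(0)} = 1$ and $K^{(2)} < \infty$ (which also controls $K^{(1)}$ and $K_+^{(1)}$ via $|u| \leq (1+u^2)/2$). Since the integrand $K(u)v(u)v(u)^\top$ is positive semidefinite pointwise, the Gram representation immediately gives that $\kappa(K)$ is positive semidefinite.

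To upgrade to positive definiteness (equivalently, invertibility), I would show that $w^\top \kappa(K) w = 0$ forces $w = 0$. Suppose $w = (w_1,w_2,w_3,w_4)^\top \in \mathbb{R}^4$ satisfies $\int K(u)(w^\top v(u))^2\,du = 0$; then $w^\top v(u)$ vanishes for almost every $u$ in the support $S := \{u : K(u) > 0\}$. The key quantitative input I need is that $S$ has positive Lebesgue measure on both sides of zero. This is immediate from the kernel axioms: non-negativity, symmetry and $\int K = 1$ together give $\int_{-\infty}^0 K = \int_0^\infty K = 1/2 > 0$, so neither $S \cap (-\infty,0)$ nor $S \cap (0,\infty)$ can be null.

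On the negative half-line, $w^\top v(u) = w_1 + w_3 u$ is affine and can vanish on a set of positive measure only if $w_1 = w_3 = 0$. On the positive half-line, $w^\top v(u) = (w_1 + w_2) + (w_3 + w_4)u$, so the same argument yields $w_1 + w_2 = 0$ and $w_3 + w_4 = 0$, which combined with the previous step forces $w_2 = w_4 = 0$. Hence $w = 0$, so $\kappa(K)$ is positive definite and in particular invertible. I do not anticipate any real obstacle: the only point that warrants a brief justification is that $K$ cannot be concentrated on one side of zero, which is an immediate consequence of symmetry together with $\int K = 1$.
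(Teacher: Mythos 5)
Your proposal is correct, but it proves the lemma by a genuinely different route than the paper. The paper's proof is computational: it uses the symmetry identities $\Kn=1$, $\Kpn=\tfrac12$, $\Ke=0$, $\Kpe=-\Kme$, $\Kpz=\Kmz$ to evaluate the determinant in closed form, $\det\kappa(K)=\bigl((\Kpe)^2-\tfrac12\Kpz\bigr)^2$, and then invokes Jensen's inequality to show this is strictly positive. Your proof instead identifies $\kappa(K)$ as the Gram matrix $\int K(u)\,v(u)v(u)^\top\,du$ with $v(u)=(1,\Ind(u\geq0),u,u\Ind(u\geq0))^\top$ (the entries do match), and rules out a null vector by noting that $w^\top v(u)$ is affine on each half-line and that the support of $K$ has positive measure on both sides of zero. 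Each approach buys something. The paper's explicit determinant is not wasted effort: the quantity $(\Kpe)^2-\tfrac12\Kpz$ reappears as the normalizing factor in the closed-form inverse of Lemma \ref{lemmaA14} and in the constant $\mathcal{C_S}$, so the computation is reused downstream; your argument yields invertibility (indeed the stronger conclusion of positive definiteness) but no formula for $\kappa(K)^{-1}$. On the other hand, your argument is cleaner on the one delicate point: the paper asserts strictness of Jensen's inequality merely because the square is non-linear, whereas strictness actually requires that $|U|$ be non-degenerate under the density $K$ --- a fact your support argument establishes explicitly rather than implicitly. Your proof also transfers verbatim to the $6\times6$ matrix $\kappa(K)$ used for the local quadratic design in the higher-order discussion, where an explicit determinant computation would be considerably more painful.
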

\begin{proof}
We prove that the determinant of the matrix is unequal to zero. The fact that
\begin{align}\label{K_properties}
    \Kn=1, \Kpn=\Kmn=\frac{1}{2}, \Kpe=-\Kme, \Ke=0 \text{ and } \Kpz=\Kmz
\end{align}
is useful to calculate that
\begingroup
\allowdisplaybreaks
\begin{equation*}
    \det \kappa(K)=\left((\Kpe)^2-\frac{1}{2}\Kpz\right)^2.
\end{equation*}
\endgroup
This expression is not zero since
\begin{align*}
    \left(\Kpe\right)^2&=\left(\frac{1}{2}\int_{-\infty}^\infty K(u)|u|\,du\right)^2<\frac{1}{4}\int_{-\infty}^\infty K(u)u^2\,du=\frac{1}{2}\Kpz,
\end{align*}
whereby the inequality is true due to Jensen's inequality which can be applied because our imposed assumptions make $K$ a probability density function. In particular, since it is applied on a non-linear function (namely quadratic function), the implied inequality is strict. Overall, this proves the invertibility of $\kappa(K)$.
\end{proof}

Please recall that the probability density function of $X_i$ is denoted by $f_X$. For the subsequent considerations we need the following basic statement: Let $L$ be an integrable function and $f:\mathbb{R}\to\mathbb{R}$ be a twice one-sided differentiable function at zero. Also let $f_X$ be twice continuously differentiable. Then, we can perform a Taylor expansion of $f$ at zero respectively from the left and from the right by evaluating the function at zero by using the continuous extension. This leads to the Taylor expansion of $L(u)f(uh)f_X(uh)$ at $h=0$ from the left, namely
\begin{align*}
    &L(u)f(uh)f_X(uh)=L(u)f_-f_X(0)+hL(u)u(f\cdot f_X)'_-+\frac{h^2}{2}L(u)u^2(f\cdot f_X)''_-+o(h^2)
\end{align*}
for $h\to 0$. This can be done analogously from the right side at zero for $h\to 0$. As a consequence, we obtain under the additional assumption $L^{(2)}<\infty$ that
\begin{equation}\label{taylorexpansion}
\begin{split}
    \E\left(\frac{1}{h}L\left(\frac{X_i}{h}\right)f(X_i)\right)=\;&L_-^{(0)}f_-f_X(0)+L_+^{(0)}f_+f_X(0)\\
    &+h\left[L_-^{(1)}(f\cdot f_X)'_-+L_+^{(1)}(f\cdot f_X)'_+\right]\\
    &+\frac{h^2}{2}\left[L_-^{(2)}(f\cdot f_X)''_-+L_+^{(2)}(f\cdot f_X)''_+\right]+o(h^2)
\end{split}
\end{equation}
for $h\to 0$. Please note that we used $\int o(h^2)\,du=o(h^2)$ which can be verified by taking a representation of the remainder term of the Taylor expansion (e.g. the Peano form) and using the Theorem of Dominated Convergence to move the limit inside the integral.

\begin{lemma}\label{lemmaA12}
Let $f_X$ be three times continuously differentiable in a neighborhood around zero and $A$ a random variable such that the function $\mu_A=\E(A\mid X_i=x)$ is well-defined and three times one-sided differentiable at $0$. Moreover, let K be a kernel with $K^{(4)}<\infty$. As we set
\begin{equation*}
    B(K,A):=\frac{1}{2}\kappa(K)^{-1}\left[ \begin{pmatrix}
    \Kpz\\\Kpz\\\Kpd\\\Kpd
    \end{pmatrix}[\mu_Af_X]''_+ + \begin{pmatrix}
    \Kmz\\0\\\Kmd\\0
    \end{pmatrix}[\mu_Af_X]''_- \right],
\end{equation*}
it holds that
\begin{equation*}
    \kappa(K)^{-1}\E(K_h(X_i)V_iA)=\begin{pmatrix}
    f_X(0)\mu_{A-}\\f_X(0)\tau_A\\h[\mu_Af_X]'_-\\h([\mu_Af_X]'_+-[\mu_Af_X]'_-)
    \end{pmatrix}+h^2B(K,A)+O(h^3)
\end{equation*}
for $h\to 0$.
\end{lemma}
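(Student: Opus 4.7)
The plan is to compute $\IE(K_h(X_i)V_iA)$ explicitly via iterated expectations and a third-order Taylor expansion, and then verify the claimed identity by showing that $\kappa(K)$ applied to the right-hand side reproduces the computed value. I prefer this route over inverting $\kappa(K)$ explicitly, since we already know from Lemma \ref{matrixinv} that $\kappa(K)$ is invertible, so an equality of the form $\kappa(K)x=y$ identifies $x$ uniquely.

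First, condition on $X_i$ and substitute $u=x/h$ to get
\begin{equation*}
\IE\bigl(K_h(X_i)V_iA\bigr)=\int_{-\infty}^{\infty}K(u)V(u)\,\mu_A(uh)f_X(uh)\,du,
\end{equation*}
where $V(u):=(1,\Ind(u\geq0),u,u\Ind(u\geq0))^\top$ (the substitution works because $T_i=\Ind(X_i\geq0)=\Ind(u\geq0)$ for $h>0$). Split the integral at $0$ and, on each side, Taylor expand $\mu_A(uh)f_X(uh)$ in $h$ around $0$ to second order with Peano remainder. Using three-times one-sided differentiability of $\mu_A$ and three-times continuous differentiability of $f_X$, the remainder is $O(h^3)$ uniformly in $u\in\mathrm{supp}(K)$, and the hypothesis $K^{(4)}<\infty$ (which implies $\int K(u)|u|^j\,du<\infty$ for $j\leq4$) ensures integrability after multiplication by $V(u)$. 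This yields
\begin{align*}
\IE(K_h(X_i)V_iA)=\;&f_X(0)\mu_{A-}\begin{pmatrix}\Kmn\\0\\\Kme\\0\end{pmatrix}+f_X(0)\mu_{A+}\begin{pmatrix}\Kpn\\\Kpn\\\Kpe\\\Kpe\end{pmatrix}\\
&+h[\mu_Af_X]'_-\begin{pmatrix}\Kme\\0\\\Kmz\\0\end{pmatrix}+h[\mu_Af_X]'_+\begin{pmatrix}\Kpe\\\Kpe\\\Kpz\\\Kpz\end{pmatrix}\\
&+\frac{h^2}{2}[\mu_Af_X]''_-\begin{pmatrix}\Kmz\\0\\\Kmd\\0\end{pmatrix}+\frac{h^2}{2}[\mu_Af_X]''_+\begin{pmatrix}\Kpz\\\Kpz\\\Kpd\\\Kpd\end{pmatrix}+O(h^3).
\end{align*}

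The remaining task is to check that the leading two terms equal $\kappa(K)$ applied to the vector $(f_X(0)\mu_{A-},\,f_X(0)\tau_A,\,h[\mu_Af_X]'_-,\,h([\mu_Af_X]'_+-[\mu_Af_X]'_-))^\top$. For the order-$1$ part this is a direct multiplication, using $\tau_A=\mu_{A+}-\mu_{A-}$, $\Kmn=\Kpn=1/2$, $\Kme=-\Kpe$ and $\Ke=0$; the combinations conspire so that the left-limit column reduces to $\mu_{A-}(1/2,0,-\Kpe,0)^\top$ and the right-limit column to $\mu_{A+}(1/2,1/2,\Kpe,\Kpe)^\top$, matching the above. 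For the order-$h$ part the same verification uses $\Kz=2\Kpz$ and $\Kmz=\Kpz$ and is likewise a short computation. The remaining $h^2$-terms are then, by the defining identity of $B(K,A)$, exactly $\kappa(K)h^2B(K,A)$, and the $O(h^3)$ remainder is preserved after multiplying by $\kappa(K)^{-1}$ (a fixed constant matrix).

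The only genuine obstacle is keeping the Taylor remainder uniformly controlled at order $h^3$ rather than at the $o(h^2)$ displayed in \eqref{taylorexpansion}; this is where the stronger moment assumption $K^{(4)}<\infty$ and the three-times differentiability hypothesis come in. Everything else is matrix bookkeeping, best done by verifying $\kappa(K)\cdot(\text{RHS}-h^2B(K,A))$ agrees with the leading terms above rather than by computing $\kappa(K)^{-1}$ directly.
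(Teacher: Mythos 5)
Your proposal is correct and follows essentially the same route as the paper: iterated expectation, the substitution $u=x/h$, a component-wise second-order Taylor expansion with an $O(h^3)$ remainder (justified by the three-times one-sided differentiability and $K^{(4)}<\infty$), and then the observation that the zeroth- and first-order vectors are exactly combinations of the columns of $\kappa(K)$ while the $h^2$-term is absorbed by the definition of $B(K,A)$. The only cosmetic difference is directional -- you verify the identity by multiplying the claimed right-hand side by $\kappa(K)$, whereas the paper applies $\kappa(K)^{-1}$ to the expansion and recognizes the columns $C_1,\dots,C_4$; these are the same computation.
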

\begin{proof}
We have
\begin{equation}\label{toinsert}
    \E(K_h(X_i)V_iA)=\E(\E(K_h(X_i)V_iA\mid X_i))=\begin{pmatrix}
        \E(K_h(X_i)\E(A\mid X_i))\\
        \E(K_h(X_i)T_i\E(A\mid X_i))\\
        \E(K_h(X_i)\frac{X_i}{h}\E(A\mid X_i))\\
        \E(K_h(X_i)\frac{T_iX_i}{h}\E(A\mid X_i))
    \end{pmatrix}.
\end{equation}
We perform a Taylor expansion for each of the components as in (\ref{taylorexpansion}) by setting $f(x)=\mu_A(x)$ and $L(u)=K(u)$, $L(u)=K(u)\mathds{1}(u\geq 0)$, $L(u)=K(u)u$ and $L(u)=K(u)u\mathds{1}(u\geq 0)$, respectively. Inserting the obtained expansions into (\ref{toinsert}) delivers
\begin{align*}
&\E(K_h(X_i)V_iA)\\
=\;&\underbrace{\begin{pmatrix}\Kpn\\\Kpn\\\Kpe\\\Kpe\end{pmatrix}\mu_{A+}f_X(0)+\begin{pmatrix}\Kmn\\0\\\Kme\\0\end{pmatrix}\mu_{A-}f_X(0)}_{=:S_1}+\underbrace{h\left[ \begin{pmatrix}\Kpe\\\Kpe\\\Kpz\\\Kpz\end{pmatrix}[\mu_Af_X]'_++\begin{pmatrix}\Kme\\0\\\Kmz\\0\end{pmatrix}[\mu_Af_X]'_- \right]}_{=:S_2}\\
    &+\underbrace{\frac{h^2}{2}\left[ \begin{pmatrix}\Kpz\\\Kpz\\\Kpd\\\Kpd\end{pmatrix}[\mu_Af_X]''_++\begin{pmatrix}\Kmz\\0\\\Kmd\\0\end{pmatrix}[\mu_Af_X]''_- \right]+O(h^3).}_{=:S_3}
\end{align*}
We consider all three summands separately. First remember the above properties of the kernel and the fact that $\mu_{A+}=\tau_A+\mu_{A-}$. Then
\begin{equation*}
    \kappa(K)^{-1}S_1=f_X(0)\kappa(K)^{-1} \left[\rule{0cm}{1.3cm}\right.\tau_A\underbrace{\begin{pmatrix}\Kpn\\\Kpn\\\Kpe\\\Kpe\end{pmatrix}}_{=:C_2}+\mu_{A-}\underbrace{\begin{pmatrix}1\\\Kpn\\0\\\Kpe\end{pmatrix}}_{=:C_1} \left]\rule{0cm}{1.3cm}\right.=f_X(0)\begin{pmatrix}\mu_{A-}\\\tau_A\\0\\0\end{pmatrix}
\end{equation*}
whereby the last equality holds since $C_1$ is the first column and $C_2$ the second column of $\kappa(K)$.
Secondly, we obtain for the second summand
\begin{align*}
    \kappa(K)^{-1}S_2&=h\kappa(K)^{-1}\left[\rule{0cm}{1.3cm}\right. \underbrace{\begin{pmatrix}\Kpe\\\Kpe\\\Kpz\\\Kpz\end{pmatrix}}_{=:C_4}[\mu_Af_X]'_++\left(\rule{0cm}{1.3cm}\right. \underbrace{\begin{pmatrix}0\\\Kpe\\\Kz\\\Kpz\end{pmatrix}}_{=:C_3}-\underbrace{\begin{pmatrix}\Kpe\\\Kpe\\\Kpz\\\Kpz\end{pmatrix}}_{=C_4} \left)\rule{0cm}{1.3cm}\right.[\mu_Af_X]'_- \left]\rule{0cm}{1.3cm}\right.\\&=h\begin{pmatrix}0\\0\\ [\mu_Af_X]'_- \\ [\mu_Af_X]'_+-[\mu_Af_X]'_- \end{pmatrix}
\end{align*}
because $C_3$ and $C_4$ are the third and fourth column of $\kappa(K)$ respectively.
Lastly, the third summand gives
\begin{equation*}
    \kappa(K)^{-1}S_3=h^2B(K,A)+O(h^3).
\end{equation*}
By taking everything together, we obtain
\begin{equation*}
    \kappa(K)^{-1}\E(K_h(X_i)V_iA)=\begin{pmatrix}
    f_X(0)\mu_{A-}\\f_X(0)\tau_A\\h[\mu_Af_X]'_-\\h([\mu_Af_X]'_+-[\mu_Af_X]'_-)
    \end{pmatrix}+h^2B(K,A)+O(h^3)
\end{equation*}
which is the assertion of the Lemma.
\end{proof}

\begin{lemma}\label{lemmaA14}
Let $K$ be a kernel with $K^{(4)}<\infty$. Then,
\begin{equation*}
    \kappa(K)^{-1}=\frac{1}{\left(\Kpe\right)^2-\frac{1}{2}\Kpz}\begin{pmatrix}
    -\Kpz&\Kpz&-\Kpe&\Kpe\\
    \Kpz&-2\Kpz&\Kpe&0\\
    -\Kpe&\Kpe&-\frac{1}{2}&\frac{1}{2}\\
    \Kpe&0&\frac{1}{2}&-1
    \end{pmatrix}.
\end{equation*}
Furthermore define
\begin{align*}
    a_1&=\frac{2\left(\Kpz\right)^2-2\Kpe\Kpd}{\Kpz-2\left(\Kpe\right)^2},\quad
    a_2=\frac{\Kpd-2\Kpe\Kpz}{\Kpz-2\left(\Kpe\right)^2},\\
    b_1&=\frac{2\Kpz\Kpd-2\Kpe K^{(4)}_+}{\Kpz-2\left(\Kpe\right)^2},\quad
    b_2=\frac{K^{(4)}_+-2\Kpe\Kpd}{\Kpz-2\left(\Kpe\right)^2},
\end{align*}
then
\begin{equation*}
    \kappa(K)^{-1}\begin{pmatrix}
    0&\Kpe&2\Kpz&\Kpz\\
    \Kpe&\Kpe&\Kpz&\Kpz\\
    2\Kpz&\Kpz&0&\Kpd\\
    \Kpz&\Kpz&\Kpd&\Kpd
    \end{pmatrix}=\begin{pmatrix}
    0&0&a_1&0\\
    0&0&0&a_1\\
    1&0&-a_2&0\\
    0&1&2a_2&a_2
    \end{pmatrix}
\end{equation*}
and
\begin{equation*}
    \kappa(K)^{-1}\begin{pmatrix}
    2\Kpz&\Kpz&0&\Kpd\\
    \Kpz&\Kpz&\Kpd&\Kpd\\
    0&\Kpd&2K^{(4)}_+&K^{(4)}_+\\
    \Kpd&\Kpd&K^{(4)}_+&K^{(4)}_+
    \end{pmatrix}=\begin{pmatrix}
    a_1&0&-b_1&0\\
    0&a_1&2b_1&b_1\\
    -a_2&0&b_2&0\\
    2a_2&a_2&0&b_2
    \end{pmatrix}.
\end{equation*}
\end{lemma}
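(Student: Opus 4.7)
All three assertions are purely algebraic identities involving $4\times 4$ matrices whose entries are fixed moments of the kernel. Since kernel properties (cf.~\eqref{K_properties} in the proof of Lemma \ref{matrixinv}) give $\Kn=1$, $\Kpn=\Kmn=\tfrac12$, $\Ke=0$, $\Kme=-\Kpe$, $\Kz=2\Kpz$, $\Kd=0$, the matrix $\kappa(K)$ simplifies to
\begin{equation*}
\kappa(K)=\begin{pmatrix} 1 & 1/2 & 0 & \Kpe \\ 1/2 & 1/2 & \Kpe & \Kpe \\ 0 & \Kpe & 2\Kpz & \Kpz \\ \Kpe & \Kpe & \Kpz & \Kpz \end{pmatrix},
\end{equation*}
and all assertions follow by verifying matrix products rather than by inverting anything directly.

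The plan for the first identity is to denote the claimed inverse by $\frac{1}{D}M$, where $D=(\Kpe)^2-\tfrac12\Kpz$ (which is nonzero by the strict Jensen inequality used in Lemma \ref{matrixinv}). Instead of computing $\kappa(K)^{-1}$ via cofactors, I would verify the identity $\kappa(K)\,M=D\cdot I_4$ by direct multiplication. Each of the $16$ entries is a short linear combination of $1$, $\Kpe$, $\Kpz$, $(\Kpe)^2$, and $\Kpe\Kpz$; the diagonal entries all collapse to $D$, while the off-diagonal entries all vanish. This is pure bookkeeping, and no new kernel identities are required beyond those used to simplify $\kappa(K)$ itself.

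For the second and third identities, my strategy is likewise to multiply by $\kappa(K)$ rather than by its inverse: I would show that $\kappa(K)$ applied to the claimed right-hand matrix reproduces the left-hand matrix column by column. This reduces each of the two identities to four systems of the form $\kappa(K)\,x=v$ in $\IR^4$. Because the first two rows of the RHS matrices are sparse, only the third and fourth rows of each column are nontrivial, and each such column amounts to a $2\times 2$ linear system in either $(a_1,a_2)$ or $(b_1,b_2)$ whose solution, after clearing denominators, matches exactly the definitions given in the statement. In fact, the definitions of $a_1,a_2$ and $b_1,b_2$ are precisely Cramer's rule applied to the $2\times 2$ block $\begin{pmatrix} \Kpz & \Kpz \\ \Kpz & 2\Kpz\end{pmatrix}$ with appropriate right-hand sides, so the match is structural and not accidental.

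The only real obstacle is the volume of arithmetic; no conceptual step is delicate. As the authors themselves note in the main proof when they compute $\kappa_{h,b}(K)$, such verifications are most efficiently checked with a computer algebra system. For a human proof I would exploit the block structure of $\kappa(K)$ (rows 1,3 versus rows 2,4 differ by the indicator block) to halve the work, and I would organise the verification as a table of the five distinct symbols $\Kpe,\Kpz,\Kpd,K^{(4)}_+,D$ with the coefficients in front of each.
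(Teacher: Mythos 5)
Your proposal is correct and matches the paper's proof, which likewise consists only of noting that $(\Kpe)^2-\frac{1}{2}\Kpz\neq 0$ by the Jensen argument from Lemma \ref{matrixinv} and declaring the rest a straightforward calculation; your plan of verifying $\kappa(K)M=\bigl((\Kpe)^2-\tfrac12\Kpz\bigr)I$ and checking the other two identities column-by-column after multiplying through by $\kappa(K)$ is exactly the right way to carry that calculation out. One small inaccuracy in a non-load-bearing aside: the $2\times2$ system that produces $a_1,a_2$ (and $b_1,b_2$) via Cramer's rule is $\begin{pmatrix}1/2 & \Kpe\\ \Kpe & \Kpz\end{pmatrix}$ with right-hand sides $(\Kpz,\Kpd)^\top$ resp.\ $(\Kpd,K_+^{(4)})^\top$ --- its determinant $\tfrac12\Kpz-(\Kpe)^2$ is what appears (up to sign) in the denominators --- not the block $\begin{pmatrix}\Kpz & \Kpz\\ \Kpz & 2\Kpz\end{pmatrix}$ you name, whose determinant is $(\Kpz)^2$.
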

\begin{proof}
Jensen's inequality implies $\left(\Kpe\right)^2-\frac{1}{2}\Kpz\neq 0$ as shown in the proof of Lemma \ref{matrixinv}. The rest of the statement is a straightforward calculation.
\end{proof}

\begin{lemma}\label{lemmaA15}
Let $f_X$ be twice continuously differentiable in a neighborhood around zero and $K$ be a kernel such that $K^{(4)}<\infty$. For $h\to 0$ and $nh\to\infty$ as $n\to\infty$, we have
\begin{align*}
    &\kappa(K)^{-1}\E(K_h(X_i)V_iV_i^\top)\\
    =\:&f_X(0)I+f_X'(0)h\begin{pmatrix}
    0&0&a_1&0\\
    0&0&0&a_1\\
    1&0&-a_2&0\\
    0&1&2a_2&a_2
    \end{pmatrix}+h^2\frac{f_X''(0)}{2}\begin{pmatrix}
    a_1&0&-b_1&0\\
    0&a_1&2b_1&b_1\\
    -a_2&0&b_2&0\\
    2a_2&a_2&0&b_2
    \end{pmatrix}+o(h^2)
\end{align*}
whereby $a_1,a_2,b_1,b_2$ are defined as in Lemma \ref{lemmaA14}.
\end{lemma}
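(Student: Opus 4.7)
The plan is to compute $\E\!\left(K_h(X_i)V_iV_i^\top\right)$ entry by entry via a Taylor expansion of $f_X$, identify the coefficient matrices at orders $1$, $h$, and $h^2$, and then multiply by $\kappa(K)^{-1}$ using Lemma \ref{lemmaA14} to obtain the claimed identity.

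More precisely, using $T_i^2=T_i$, each entry of $V_iV_i^\top$ is of the form $T_i^{\alpha}(X_i/h)^{\beta}$ for $\alpha\in\{0,1\}$ and $\beta\in\{0,1,2\}$. Substituting $u=X_i/h$ and using that $K$ is supported on $[-1,1]$, we have
\begin{equation*}
    \E\!\left(K_h(X_i)T_i^{\alpha}(X_i/h)^{\beta}\right)=\int_{-1}^{1}K(u)\mathds{1}(u\geq 0)^{\alpha}u^{\beta}f_X(uh)\,du.
\end{equation*}
Since $f_X$ is three times continuously differentiable in a neighborhood of zero, I would write $f_X(uh)=f_X(0)+uh\,f_X'(0)+\tfrac{(uh)^{2}}{2}f_X''(0)+R(uh)$ with $R(uh)=o(h^{2})$ uniformly in $u\in[-1,1]$. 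The remainder integrates to $o(h^{2})$ by dominated convergence (bounded by a multiple of $h^{2}|u|^{2}K(u)$ times a factor vanishing with $h$, and $K^{(2)}<\infty$). This produces
\begin{equation*}
    \E\!\left(K_h(X_i)V_iV_i^\top\right)=f_X(0)\,\kappa(K)+h\,f_X'(0)\,N_1+\tfrac{h^{2}}{2}f_X''(0)\,N_2+o(h^{2}),
\end{equation*}
where the entries of $N_1$ and $N_2$ are the one-sided kernel moments $\Kpe,\Kpz,\Kpd,K_{+}^{(4)}$ (together with the symmetry-based identities $K^{(1)}=0$, $K^{(2)}=2\Kpz$, $K^{(3)}=0$, $K^{(4)}=2K_{+}^{(4)}$, and $\Kme=-\Kpe$, $\Kmd=-\Kpd$). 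A direct check against the entry $T_i^{\alpha}(X_i/h)^{\beta}$ shows that $N_1$ coincides exactly with the matrix appearing on the left of the first identity in Lemma \ref{lemmaA14}, and $N_2$ coincides with the matrix on the left of the second identity. For example, the $(1,3)$-entry of $N_1$ must be the coefficient of $h f_X'(0)$ in $\int K(u)u\,f_X(uh)\,du$, which is $\Kz=2\Kpz$, matching the $(1,3)$-entry of Lemma \ref{lemmaA14}'s first input matrix.

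Premultiplying by $\kappa(K)^{-1}$ (which exists by Lemma \ref{matrixinv}, so $\|\kappa(K)^{-1}\|_2$ is a fixed constant and the $o(h^2)$ error remains $o(h^{2})$) gives
\begin{equation*}
    \kappa(K)^{-1}\E\!\left(K_h(X_i)V_iV_i^\top\right)=f_X(0)\,I+h\,f_X'(0)\,\kappa(K)^{-1}N_1+\tfrac{h^{2}}{2}f_X''(0)\,\kappa(K)^{-1}N_2+o(h^{2}),
\end{equation*}
and the two matrix identities in Lemma \ref{lemmaA14} turn $\kappa(K)^{-1}N_1$ and $\kappa(K)^{-1}N_2$ into precisely the two explicit matrices in the statement, yielding the claim. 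The main obstacle is the bookkeeping: one must carefully verify for each of the $16$ positions of $V_iV_i^\top$ (reduced by symmetry to $10$ distinct entries) that the one-sided kernel moments produced by the Taylor expansion line up with the matrices prescribed in Lemma \ref{lemmaA14}. Aside from this combinatorial check, everything else is standard Taylor expansion with a dominated-convergence argument for the uniform remainder bound, and the assumption $K^{(4)}<\infty$ is exactly what is needed to integrate the $u^{2}$ factors that appear in the $h^{2}$ term.
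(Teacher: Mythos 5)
Your proposal is correct and takes essentially the same route as the paper: a component-wise second-order Taylor expansion of $f_X$ yielding $\E(K_h(X_i)V_iV_i^\top)=f_X(0)\kappa(K)+hf_X'(0)N_1+\tfrac{h^2}{2}f_X''(0)N_2+o(h^2)$, where $N_1$ and $N_2$ are exactly the input matrices of Lemma \ref{lemmaA14}, followed by an application of that lemma to evaluate $\kappa(K)^{-1}N_1$ and $\kappa(K)^{-1}N_2$. (One minor slip: the lemma only assumes $f_X$ is twice continuously differentiable, which is all your uniform Peano-remainder argument actually requires.)
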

\begin{proof}
To calculate $\E(K_h(X_i)V_iV_i^\top)$ we again use Taylor expansion, as stated in (\ref{taylorexpansion}), component-wise in combination with the kernel properties in (\ref{K_properties}). Overall, this gives
\begin{align*}
    \kappa(K)^{-1}\E(K_h(X_i)V_iV_i^\top)=\:&f_X(0)I+f_X'(0)h\kappa(K)^{-1}\begin{pmatrix}
    0&\Kpe&2\Kpz&\Kpz\\
    \Kpe&\Kpe&\Kpz&\Kpz\\
    2\Kpz&\Kpz&0&\Kpd\\
    \Kpz&\Kpz&\Kpd&\Kpd
    \end{pmatrix}\\&+\frac{h^2}{2}f_X''(0)\kappa(K)^{-1}\begin{pmatrix}
    2\Kpz&\Kpz&0&\Kpd\\
    \Kpz&\Kpz&\Kpd&\Kpd\\
    0&\Kpd&2K^{(4)}_+&K^{(4)}_+\\
    \Kpd&\Kpd&K^{(4)}_+&K^{(4)}_+
    \end{pmatrix}+o(h^2).
\end{align*}
We can finish the proof by using Lemma \ref{lemmaA14} now.
\end{proof}

\begin{lemma}\label{lemmaA11}
Suppose
\begin{equation*}
    \norm{\E(K_h(X_i)Z_iZ_i^\top)^{-1}}_2=O(1)
\end{equation*}
and assume that $f_X$ is continuous in a neighborhood around zero and $\mu_{ZY}$ can be extended continuously to zero from the left and the right. Then
\begin{equation*}
    \norm{\E(K_h(X_i)Z_iZ_i^\top)^{-1}\E(K_h(X_i)Z_iY_i)}_2=O(1).
\end{equation*}
\end{lemma}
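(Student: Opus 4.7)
The plan is to use submultiplicativity of the matrix $2$-norm to split
$$\norm{\E(K_h(X_i)Z_iZ_i^\top)^{-1}\E(K_h(X_i)Z_iY_i)}_2 \leq \norm{\E(K_h(X_i)Z_iZ_i^\top)^{-1}}_2 \cdot \norm{\E(K_h(X_i)Z_iY_i)}_2.$$
The first factor is $O(1)$ directly by hypothesis, so the entire statement reduces to showing that $\norm{\E(K_h(X_i)Z_iY_i)}_2 = O(1)$.

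For the remaining factor I would condition on the running variable to rewrite componentwise $\E(K_h(X_i)Z_iY_i) = \E(K_h(X_i)\mu_{ZY}(X_i))$, and then perform the substitution $u = X_i/h$ to obtain the integral representation
$$\E(K_h(X_i)\mu_{ZY}(X_i)) = \int K(u)\,\mu_{ZY}(uh)\,f_X(uh)\,du.$$
The strategy from here is to apply the dominated convergence theorem as $h\to 0$: since $K$ is supported on $[-1,1]$, the relevant range of $uh$ is $[-h,h]$, and by the continuity of $f_X$ around zero together with the one-sided continuous extensions of $\mu_{ZY}$ at zero, the product $\mu_{ZY}(uh)f_X(uh)$ is uniformly bounded on the support of $K$ for sufficiently small $h$. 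Hence the integral converges as $h\to 0$ to the finite limit $f_X(0)\bigl(K_-^{(0)}\mu_{ZY-} + K_+^{(0)}\mu_{ZY+}\bigr)$, and any convergent real sequence is bounded. Since $p$ is fixed, the componentwise bound transfers to a bound on the Euclidean norm, giving the desired conclusion.

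The only (minor) obstacle is the careful justification of uniform boundedness of the integrand, given that $\mu_{ZY}$ need not even be defined at zero. However, the assumed left- and right-continuous extensions imply that $\mu_{ZY}$ is bounded on any small punctured interval $[-\delta,\delta]\setminus\{0\}$ around zero, and together with local boundedness of $f_X$ near zero and the integrability of $K$, this is enough to invoke dominated convergence. Conceptually, this is just the zeroth-order analogue of the Taylor expansion argument already formalised in \eqref{taylorexpansion}, requiring only continuous extensions rather than differentiability.
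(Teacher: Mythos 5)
Your proposal is correct and follows essentially the same route as the paper: submultiplicativity of the operator norm to isolate $\norm{\E(K_h(X_i)Z_iY_i)}_2$, followed by conditioning on $X_i$ and a zeroth-order expansion showing this expectation converges to $\tfrac{1}{2}f_X(0)\left(\mu_{ZY-}+\mu_{ZY+}\right)$ and is hence bounded. Your explicit dominated-convergence justification is simply a more careful spelling-out of the Taylor-expansion step the paper invokes via \eqref{taylorexpansion}.
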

\begin{proof}
By using a well-known property of the operator norm, we obtain
\begin{equation*}
    \norm{\E(K_h(X_i)Z_iZ_i^\top)^{-1}\E(K_h(X_i)Z_iY_i)}_2\leq \norm{\E(K_h(X_i)Z_iZ_i^\top)^{-1}}_2\norm{\E(K_h(X_i)Z_iY_i)}_2.
\end{equation*}
The first factor is $O(1)$ by assumption. We can find an upper bound for the second factor by Taylor expansion:
\begin{equation*}
    \E(K_h(X_i)Z_iY_i)=\E(K_h(X_i)\E(Z_iY_i\mid X_i))=\frac{1}{2}f_X(0)\mu_{ZY-}+\frac{1}{2}f_X(0)\mu_{ZY+}+o(1)=O(1).
\end{equation*}
Note that $\mu_{ZY-}$ and $\mu_{ZY+}$ exist by assumption. This shows the assertion of the lemma.
\end{proof}

\begin{lemma}\label{lemmaA13}
    Let $K$ be a kernel with $K^{(4)}<\infty$, $f_X$ be three times continuously differentiable in a neighborhood around zero with $f_X(0)> 0$ and $nh\to\infty, h\to 0$ as $n\to\infty$. Furthermore, suppose that $\mu_Z$ is continuous and one-sided differentiable at zero up to order three whereby the derivatives extend continuously to zero with $\lim_{x\nearrow 0} \mu'_Z(x)=\lim_{x\searrow 0} \mu'_Z(x)=0$ and $\mu_Z(0)=0$. Also, let
    \begin{equation}\label{ZZTassumption}
        \norm{\E(K_h(X_i)Z_iZ_i^\top)^{-1}}_2=O(1).
    \end{equation}
    Then,
    \begin{align*}
        &\left[ \left( I-\E(K_h(X_i)V_iV_i^\top)^{-1}\E(K_h(X_i)V_iZ_i^\top)\E(K_h(X_i)Z_iZ_i^\top)^{-1}\E(K_h(X_i)Z_iV_i^\top) \right)^{-1} \right]_2\\
        =\;&\begin{pmatrix}
            0&1&0&0
        \end{pmatrix} + O(h^4)
    \end{align*}
\end{lemma}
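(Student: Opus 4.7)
The plan is to show that the matrix product
$$M_h\;:=\;\E(K_h(X_i)V_iV_i^\top)^{-1}\E(K_h(X_i)V_iZ_i^\top)\E(K_h(X_i)Z_iZ_i^\top)^{-1}\E(K_h(X_i)Z_iV_i^\top)$$
is $O(h^4)$ entrywise. Once this is established, the matrix $I-M_h$ is of the form $I+O(h^4)$, and for $h$ small enough its inverse is given by a convergent Neumann series $\sum_{k\geq 0}M_h^k=I+O(h^4)$; extracting the second row yields $\begin{pmatrix}0 & 1 & 0 & 0\end{pmatrix}+O(h^4)$, which is exactly the claim.

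To obtain the $O(h^2)$ bound on the cross terms $\E(K_h(X_i)V_iZ_i^\top)$ (and its transpose), I apply Lemma \ref{lemmaA12} column-wise with $A=Z_i^{(k)}$ for $k=1,\dots,p$. The assumptions $\mu_Z(0)=0$, $\lim_{x\searrow 0}\mu'_Z(x)=\lim_{x\nearrow 0}\mu'_Z(x)=0$ (and hence $\tau_{Z^{(k)}}=0$) force the leading vector $(f_X(0)\mu_{Z^{(k)}-}, f_X(0)\tau_{Z^{(k)}}, h[\mu_{Z^{(k)}}f_X]'_-, h([\mu_{Z^{(k)}}f_X]'_+-[\mu_{Z^{(k)}}f_X]'_-))^\top$ in the expansion of $\kappa(K)^{-1}\E(K_h(X_i)V_iZ_i^{(k)})$ to vanish. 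Consequently $\kappa(K)^{-1}\E(K_h(X_i)V_iZ_i^\top)=h^2B+O(h^3)=O(h^2)$, and pre-multiplying by the constant matrix $\kappa(K)$ (invertible by Lemma \ref{matrixinv}) yields $\E(K_h(X_i)V_iZ_i^\top)=O(h^2)$, with the same bound for $\E(K_h(X_i)Z_iV_i^\top)$ by transposition.

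For the two inverses, Lemma \ref{lemmaA15} gives $\kappa(K)^{-1}\E(K_h(X_i)V_iV_i^\top)=f_X(0)I+O(h)$, whose inverse exists for small $h$ and equals $f_X(0)^{-1}I+O(h)$, hence $\E(K_h(X_i)V_iV_i^\top)^{-1}=\frac{1}{f_X(0)}\kappa(K)^{-1}+O(h)=O(1)$; the other inverse is $O(1)$ directly by the imposed assumption \eqref{ZZTassumption}. Combining these four factors gives $M_h=O(1)\cdot O(h^2)\cdot O(1)\cdot O(h^2)=O(h^4)$ (here all $O$-notation is entrywise, which is safe because $p$ and the dimension $4$ are fixed). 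This justifies the Neumann expansion above and concludes the proof.

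The main technical point, and the only place where the structural assumptions on $\mu_Z$ at zero are actually used, is the derivation of the $O(h^2)$ bound for the cross terms. Without $\mu_Z(0)=0$ and $\mu'_{Z\pm}=0$, Lemma \ref{lemmaA12} would only deliver $\E(K_h(X_i)V_iZ_i^\top)=O(1)$, and the whole product $M_h$ would merely be $O(1)$, spoiling the sharp $O(h^4)$ remainder. Everything else in the argument is bookkeeping with Landau symbols and the Neumann series expansion of $(I-M_h)^{-1}$, which is routine once the two off-diagonal blocks have been shown to be of order $h^2$.
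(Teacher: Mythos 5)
Your proposal is correct and follows essentially the same route as the paper: the paper likewise reduces everything to showing that the cross term $\E(K_h(X_i)V_iZ_i^\top)$ is $O(h^2)$ (via the same Taylor expansion that underlies Lemma \ref{lemmaA12}, using $\mu_Z(0)=0$ and $\mu'_{Z\pm}=0$), bounds the two inverses by $O(1)$ using Lemma \ref{lemmaA15} and assumption \eqref{ZZTassumption}, and concludes that the full product is $O(h^4)$ before inverting $I+O(h^4)$. Your explicit Neumann-series justification of the final inversion is a small presentational improvement over the paper, which asserts that step without comment.
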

\begin{proof}
Let $k\in\{1,...,p\}$ and define the matrix $\gamma_n\in\mathbb{R}^{p\times 4}$ by its rows via
\begin{align*}
    [\gamma_n]_{k,\cdot}:=\;&\E\left(K_h(X_i)V_iZ_i^{(k)}\right)^\top=\E\left(K_h(X_i)V_i\E\left(Z_i^{(k)}\mid X_i\right)\right)^\top
\end{align*}
Analogously to previous proofs, we can examine each component by conducting a Taylor expansion as in (\ref{taylorexpansion}). In addition, we also use the properties in (\ref{K_properties}) as well as the assumption that $\mu_Z(0)=0$ and $\mu'_{Z+}=\mu'_{Z-}=0$. Then, this results in
\begingroup
\allowdisplaybreaks
\begin{align*}
    [\gamma_n]_{k,\cdot}^\top=\frac{h^2}{2}\left(\begin{pmatrix}
        [\mu_{Z^{(k)}}f_X]''_-&[\mu_{Z^{(k)}}f_X]''_+-[\mu_{Z^{(k)}}f_X]''_-
    \end{pmatrix}\begin{pmatrix}
        \Kz&\Kpz&0&\Kpd\\
        \Kpz&\Kpz&\Kpd&\Kpd
    \end{pmatrix}\right)^\top+O(h^3).
\end{align*}
\endgroup
Thus, we know that
\begin{equation*}
    \norm{\gamma_n}_2=O(h^2)+O(h^3)=O(h^2).
\end{equation*}
This in turn gives us
\begin{equation}\label{Ohvier}
\begin{split}
    &\norm{\E(K_h(X_i)V_iZ_i^\top)\E(K_h(X_i)Z_iZ_i^\top)^{-1}\E(K_h(X_i)Z_iV_i^\top)}_2\\=\;&\norm{\gamma_n^\top\E(K_h(X_i)Z_iZ_i^\top)^{-1}\gamma_n}_2\leq \underbrace{\norm{\gamma_n^\top}_2}_{=O(h^2)}\underbrace{\norm{\E(K_h(X_i)Z_iZ_i^\top)^{-1}}_2}_{=O(1)}\underbrace{\norm{\gamma_n}_2}_{=O(h^2)}=O(h^4)
\end{split}
\end{equation}
whereby we used the submultiplicativity of the spectral norm. Also, we know by Lemma \ref{lemmaA15} that
\begin{equation}\label{Oheins}
    \E(K_h(X_i)V_iV_i^\top)^{-1}=\left[f_X(0)\kappa(K)+O(h)\right]^{-1}=\frac{1}{f_X(0)}\kappa(K)^{-1}+O(h)=O(1).
\end{equation}
Taking the approximations (\ref{Ohvier}) and (\ref{Oheins}) together, we obtain
\begin{align*}
    &I-\underbrace{\E(K_h(X_i)V_iV_i^\top)^{-1}}_{=O(1)}\underbrace{\gamma_n^\top\E(K_h(X_i)Z_iZ_i^\top)^{-1}\gamma_n}_{=O(h^4)}=I+O(h^4)
\end{align*}
Consequently, we finally obtain the assertion of the lemma, i.e.
\begin{equation*}
    \left[ \left( I-\E(K_h(X_i)V_iV_i^\top)^{-1}\gamma_n^\top\E(K_h(X_i)Z_iZ_i^\top)^{-1}\gamma_n \right)^{-1} \right]_2=\begin{pmatrix}
        0&1&0&0
    \end{pmatrix}+O(h^4).
\end{equation*}
\end{proof}

\newpage
\section{Convergence to the Standard Normal Distribution}

\begin{lemma}\label{lemmaConvergence}
Let $K$ be a kernel which is compactly supported on $[-1,1]$ and satisfies $(K^2)^{(0)}<\infty$. Let $f_X$ be continuous in a neighborhood around zero and $h\to 0, nh\to \infty$ as $n\to\infty$. Moreover, let $A_i\in\mathbb{R}, i=1,...,n$ be random variables, which can also depend on $n$ and $h$, such that $\left( (X_i,A_i) \right)_{i=1,...,n}$ is a family of $n$ independent random variables and
\begin{equation}\label{assumpL61}
    \sup_{n\in\mathbb{N}}\sup_{x\in [-h,h]} \E(A_i^2\mid X_i=x)<\infty.
\end{equation}
Then,
\begin{equation*}
    \frac{1}{n}\sum_{i=1}^n K_h(X_i)A_i=\E(K_h(X_i)A_i)+O_P\left(\frac{1}{\sqrt{nh}}\right).
\end{equation*}
\end{lemma}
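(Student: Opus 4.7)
The plan is to apply Chebyshev's inequality to the centered sum
$$S_n:=\frac{1}{n}\sum_{i=1}^n K_h(X_i)A_i - \E(K_h(X_i)A_i),$$
whose summands $W_i := K_h(X_i)A_i - \E(K_h(X_i)A_i)$ are independent and mean-zero by hypothesis. The proof thus reduces to controlling $\mathrm{Var}(S_n)$.

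First I would use independence to write
$$\mathrm{Var}(S_n) = \frac{1}{n^2}\sum_{i=1}^n \mathrm{Var}(W_i) \le \frac{1}{n}\E\!\left(K_h(X_i)^2 A_i^2\right),$$
and then condition on $X_i$ and perform the substitution $u=x/h$ to obtain
$$\E\!\left(K_h(X_i)^2 A_i^2\right) = \E\!\left(K_h(X_i)^2\,\E(A_i^2\mid X_i)\right) = \frac{1}{h}\int_{-1}^{1} K(u)^2\,\E(A_i^2\mid X_i=uh)\,f_X(uh)\,du,$$
where the integration range collapses to $[-1,1]$ because $K$ is supported there; in particular $uh\in[-h,h]$ throughout the integration.

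Next I would argue that $h\cdot\E(K_h(X_i)^2 A_i^2)$ is bounded uniformly in $n$ by some constant $C$: the conditional second moment $\E(A_i^2\mid X_i=uh)$ is uniformly bounded by the assumption (\ref{assumpL61}), the density $f_X$ is bounded in a neighborhood of zero by continuity (which eventually contains $[-h,h]$ since $h\to 0$), and $(K^2)^{(0)}<\infty$ by hypothesis. Hence $\mathrm{Var}(S_n) \le C/(nh)$, and Chebyshev's inequality yields, for every $M>0$,
$$\IP\!\left(\sqrt{nh}\,|S_n|>M\right) \le \frac{nh\cdot C/(nh)}{M^2} = \frac{C}{M^2},$$
uniformly in $n$. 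This is exactly the tightness statement $\sqrt{nh}\,S_n = O_P(1)$, which is the claim.

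There is no serious obstacle here; the only care needed is ensuring that every bound is truly uniform in $n$, which relies precisely on the $\sup_n\sup_{x\in[-h,h]}$ formulation of (\ref{assumpL61}) together with $h\to 0$ so that $[-h,h]$ eventually lies inside the neighborhood of continuity of $f_X$.
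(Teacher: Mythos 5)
Your proposal is correct and follows essentially the same route as the paper's proof: Chebyshev's inequality applied to the centered sum, with the variance bounded by $C/(nh)$ using independence, conditioning on $X_i$, the compact support of $K$ together with $(K^2)^{(0)}<\infty$, and the uniform bound on $\E(A_i^2\mid X_i=x)$. The only cosmetic difference is that you substitute $u=x/h$ explicitly before bounding, whereas the paper pulls out the supremum first; the estimates are identical.
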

\begin{proof}
We set
\begin{equation*}
    S_n:=\frac{1}{n}\sum_{i=1}^n K_h(X_i)A_i.
\end{equation*}
By the imposed assumptions, we know that the mean and variance of $S_n$ are finite. Therefore, we can use Chebyshev's inequality in order to obtain
\begingroup
\allowdisplaybreaks
\begin{align*}
    &P(|S_n-\E(S_n)|>M)\\
    \leq\;&\frac{1}{M^2}\E\left( (S_n-\E(S_n))^2 \right)\\
    =\;&\frac{1}{M^2}\frac{1}{n^2}\E\left( \left( \sum_{i=1}^n \left(K_h(X_i)A_i-\E(K_h(X_i)A_i)\right) \right)^2 \right)\\
    =\;&\frac{1}{M^2}\frac{1}{n^2}\E\left( \sum_{i=1}^n (K_h(X_i)A_i-\E(K_h(X_i)A_i))^2\right) + \sum_{i\neq j} \underbrace{\mathrm{Cov}(K_h(X_i)A_i,K_h(X_j)A_j)}_{=0}\\
    =\;&\frac{1}{M^2}\frac{1}{n^2}\sum_{i=1}^n \E\left( (K_h(X_i)A_i-\E(K_h(X_i)A_i))^2 \right)\\
    \leq\;&\frac{1}{M^2}\frac{1}{n}\E\left( (K_h(X_i)A_i)^2 \right)\\
    =\;&\frac{1}{M^2}\frac{1}{n}\E\left( K_h(X_i)^2\E\left( A_i^2 \mid X_i\right)\right)\\
    \leq\;&\sup_{n\in\mathbb{N}}\sup_{x\in [-h,h]}\E\left( A_i^2 \mid X_i=x\right)\frac{1}{M^2}\frac{1}{nh}\int_{-1}^1 K\left(y\right)^2f_X(yh)\,dy=O\left(\frac{1}{nhM^2}\right),
\end{align*}
\endgroup
whereby $\int_{-1}^1 K\left(y\right)^2f_X(yh)\,dy=O(1)$ due to $(K^2)^{(0)}<\infty$, $h\to 0$ and the continuity of $f_X$ in a neighborhood around zero.
Overall, this leads to
\begin{equation*}
    P\left(|S_n-\E(S_n)|>M\right)\leq \frac{C}{nhM^2}
\end{equation*}
for some $C>0$. Given an arbitrary $\epsilon>0$, choose $M$ large enough such that $\frac{C}{M^2}<\epsilon$. Then,
\begin{equation*}
    P(\sqrt{nh}|S_n-\E(S_n)|>M)=P\left(|S_n-\E(S_n)|>\frac{M}{\sqrt{nh}}\right)\leq\frac{C}{M^2}<\epsilon,
\end{equation*}
which means that $S_n-\E(S_n)=O_P\left(\frac{1}{\sqrt{nh}}\right)$.
\end{proof}

\begin{lemma}\label{lemmaA8}
Let $K$ be a kernel which is compactly supported on $[-1,1]$ and satisfies $(K^2)^{(0)}<\infty$. Let $f_X$ be continuous in a neighborhood around zero. Furthermore, let $h\to 0, nh\to \infty$ as $n\to\infty$ and
\begin{equation}\label{rsquared}
    \sup_{n\in\mathbb{N}}\sup_{x\in[-h,h]} \E(r_i(h)^2\mid X_i=x)<\infty.
\end{equation}
Then,
\begin{equation*}
    \frac{1}{n}\Vbf^\top \Kh \rbf(h)=O_P\left(\frac{1}{\sqrt{nh}}\right).
\end{equation*}
\end{lemma}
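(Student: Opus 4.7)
The plan is to reduce this to Lemma \ref{lemmaConvergence} applied componentwise, after first removing the mean via the population first-order conditions. Writing out the $j$-th entry of the vector,
$$\left[\frac{1}{n}\Vbf^\top\Kh\rbf(h)\right]_j=\frac{1}{n}\sum_{i=1}^n K_h(X_i)V_i^{(j)}r_i(h),$$
so it suffices to establish the bound for each $j\in\{1,2,3,4\}$ separately.

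First I would observe that the expectation of each such sum is exactly zero. This follows from the fact that $(\theta_0(h),\gamma_0(h))$ minimizes $\E\left(K_h(X_i)(Y_i-V_i^\top\theta-Z_i^\top\gamma)^2\right)$. Differentiating the objective with respect to $\theta$ and evaluating at the minimizer yields the population normal equation $\E(K_h(X_i)V_i r_i(h))=0$. Hence there is nothing to subtract: the centering in Lemma \ref{lemmaConvergence} contributes zero.

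Next I would verify the hypothesis (\ref{assumpL61}) of Lemma \ref{lemmaConvergence} with $A_i:=V_i^{(j)}r_i(h)$. Since $V_i$ depends only on $X_i$, we may pull $V_i^{(j)}$ out of the conditional expectation:
$$\E\!\left(A_i^2\mid X_i=x\right)=\bigl(V_i^{(j)}(x)\bigr)^2\E\!\left(r_i(h)^2\mid X_i=x\right).$$
The key point is that on the support of $K$, i.e.\ for $x\in[-h,h]$, every component of $V_i(x)=(1,\mathds{1}(x\geq 0),x/h,\mathds{1}(x\geq 0)x/h)^\top$ is bounded in absolute value by $1$. Therefore
$$\sup_{n\in\IN}\sup_{x\in[-h,h]}\E(A_i^2\mid X_i=x)\leq\sup_{n\in\IN}\sup_{x\in[-h,h]}\E(r_i(h)^2\mid X_i=x)<\infty$$
by assumption (\ref{rsquared}), which is exactly condition (\ref{assumpL61}).

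Applying Lemma \ref{lemmaConvergence} componentwise then yields
$$\frac{1}{n}\sum_{i=1}^n K_h(X_i)V_i^{(j)}r_i(h)=\E\!\left(K_h(X_i)V_i^{(j)}r_i(h)\right)+O_P\!\left(\tfrac{1}{\sqrt{nh}}\right)=O_P\!\left(\tfrac{1}{\sqrt{nh}}\right)$$
for each $j$, and stacking the four components gives the asserted rate for the vector $\frac{1}{n}\Vbf^\top\Kh\rbf(h)$. There is no real obstacle; the only nontrivial observation is that the boundedness of $V_i^{(j)}$ on the support of $K_h$ is precisely what allows the second moment hypothesis on the residuals alone to carry over to $V_i^{(j)}r_i(h)$.
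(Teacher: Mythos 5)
Your proof is correct and follows essentially the same route as the paper's: both rest on the Chebyshev/second-moment bound of Lemma \ref{lemmaConvergence} (which the paper re-derives inline via Markov's inequality rather than citing), using the boundedness of the entries of $V_i$ on $[-h,h]$ to transfer the moment condition \eqref{rsquared} to $V_i^{(j)}r_i(h)$. Your explicit appeal to the population normal equation $\E(K_h(X_i)V_ir_i(h))=0$ is a welcome clarification of a step the paper's proof uses only implicitly when the cross terms are dropped.
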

\begin{proof}
We want to prove this row-wise. Let $a\in\{1,...,4\}$, then
\begin{align*}
    \E\left( \frac{1}{h}K\left(\frac{X_i}{h}\right)^2 V_{i,a}^2 r_i(h)^2 \right)&=\E\left(\E\left( \frac{1}{h}K\left(\frac{X_i}{h}\right)^2 V_{i,a}^2 r_i(h)^2 \;\,\vrule\; X_i\right)\right)\\
    &=\E\left( \frac{1}{h}K\left(\frac{X_i}{h}\right)^2 V_{i,a}^2 \E(r_i(h)^2\mid X_i) \right)=O(1)
\end{align*}
due to assumption (\ref{rsquared}) and our assumptions imposed on $f_X$ and $K$. Using Markov's inequality, we obtain for every $\epsilon>0$ that
\begin{align*}
    P\left( \left| \frac{1}{n}\Vbf^\top_{\cdot a}\Kh\rbf(h) \right|>M \right)&\leq\frac{1}{M^2}\E\left( \left( \frac{1}{n}\sum_{i=1}^n \frac{1}{h}K\left(\frac{X_i}{h}\right) V_{i,a} r_i(h) \right)^2 \right)\\
    &=\frac{1}{nhM^2}\E\left( \frac{1}{h}K\left(\frac{X_i}{h}\right)^2 V_{i,a}^2 r_i(h)^2 \right)=O\left( \frac{1}{nhM^2} \right),
\end{align*}
leading to
\begin{align*}
    P\left( \left| \frac{1}{n}\Vbf^\top_{\cdot a}\Kh\rbf(h) \right|> M \right)\leq \frac{C}{nhM^2}
\end{align*}
for some constant $C>0$. Now we can conclude with the same argument as in the proof of Lemma \ref{lemmaConvergence}.
\end{proof}

\begin{lemma}\label{propositionA7}
Let all assumptions of Lemma \ref{lemmaA13} hold. Also, let $K$ be a kernel which is compactly supported on $[-1,1]$ and satisfies $K^{(4)}, (K^2)^{(0)}<\infty$. Additionally, define
\begin{equation*}
    M:=I-\Khfrac \Zbf(\Ztbf \Kh \Zbf)^{-1}\Ztbf \Khfrac
\end{equation*}
and suppose that for all $k, l\in\{1,...,p\}$
\begin{equation}\label{cond1}
\begin{split}
    &\sup_{n\in\mathbb{N}}\sup_{x\in[-h,h]}\, \E\left(\left(Z_i^{(k)}r_i(h)\right)^2\;\,\vrule\; X_i=x\right) < \infty,\\
    &\sup_{n\in\mathbb{N}}\sup_{x\in[-h,h]}\, \E\left(\left(Z_i^{(k)}Z_i^{(l)}\right)^2\;\,\vrule\; X_i=x\right)<\infty.
\end{split}
\end{equation}
Then
\begin{equation*}
    \frac{1}{n}\Vbf^\top \Khfrac M\Khfrac\Vbf = f_X(0)\kappa(K)+o_P(1)
\end{equation*}
and
\begin{equation*}
    \frac{1}{n}\Vbf^\top \Khfrac M\Khfrac \rbf(h)=\frac{1}{n}\Vbf^\top \Kh \rbf(h)+o_P\left(\frac{1}{\sqrt{nh}}\right).
\end{equation*}
\end{lemma}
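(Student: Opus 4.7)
My plan is to start from the identity
$$\Khfrac M \Khfrac = \Kh - \Kh \Zbf\left(\Ztbf \Kh \Zbf\right)^{-1}\Ztbf \Kh,$$
which, after writing $(\Ztbf\Kh\Zbf)^{-1}=n^{-1}(n^{-1}\Ztbf\Kh\Zbf)^{-1}$, recasts both assertions as controlling a ``projection correction'' of the shape $(n^{-1}A^\top\Kh\Zbf)(n^{-1}\Ztbf\Kh\Zbf)^{-1}(n^{-1}\Ztbf\Kh B)$ with $A=\Vbf$ and $B\in\{\Vbf,\rbf(h)\}$. The idea is to estimate each of the three factors separately and combine them via submultiplicativity of the operator norm.

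First I would collect the building blocks via Lemma \ref{lemmaConvergence} applied entry-wise; the moment assumptions in \eqref{cond1} are precisely tailored for this. Combined with Lemma \ref{lemmaA15}, this gives $\tfrac{1}{n}\Vbf^\top\Kh\Vbf = f_X(0)\kappa(K) + o_P(1)$ and, entry-wise, $\tfrac{1}{n}\Ztbf\Kh\Zbf = \E(K_h(X_i)Z_iZ_i^\top) + o_P(1)$, the latter implying the same in operator norm because $p$ is fixed. The hypothesis $\|\E(K_h(X_i)Z_iZ_i^\top)^{-1}\|_2 = O(1)$ then transfers to $\|(\tfrac{1}{n}\Ztbf\Kh\Zbf)^{-1}\|_2 = O_P(1)$ on an event of probability tending to one, via a Neumann-series argument. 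For the off-diagonal factor I would reuse the computation already performed inside the proof of Lemma \ref{lemmaA13}, which exploits $\mu_Z(0)=0$ and $\mu'_{Z\pm}=0$ to show $\|\E(K_h(X_i)V_iZ_i^\top)\|_2 = O(h^2)$; hence $\tfrac{1}{n}\Vbf^\top\Kh\Zbf = O_P(h^2) + O_P((nh)^{-1/2}) = o_P(1)$.

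For the first assertion, the correction term is then of size $o_P(1)\cdot O_P(1)\cdot o_P(1) = o_P(1)$, which combined with $\tfrac{1}{n}\Vbf^\top\Kh\Vbf = f_X(0)\kappa(K) + o_P(1)$ yields the claim. For the second assertion, the crucial additional input is the population first-order condition $\E(K_h(X_i) Z_i r_i(h)) = 0$, which follows directly from the definition of $(\theta_0(h),\gamma_0(h))$ as a minimizer of $\E(K_h(X_i)(Y_i-V_i^\top\theta-Z_i^\top\gamma)^2)$. Because the centering is zero, Lemma \ref{lemmaA8} applied componentwise (together with the first moment bound in \eqref{cond1}) gives $\tfrac{1}{n}\Ztbf\Kh\rbf(h) = O_P((nh)^{-1/2})$, so that the correction term is bounded by
$$\left(O_P(h^2)+O_P((nh)^{-1/2})\right)\cdot O_P(1)\cdot O_P((nh)^{-1/2}) = o_P\!\left((nh)^{-1/2}\right),$$
as required.

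The step I expect to be the most delicate is the Neumann-series transfer from $\|\E(K_h(X_i)Z_iZ_i^\top)^{-1}\|_2 = O(1)$ to $\|(\tfrac{1}{n}\Ztbf\Kh\Zbf)^{-1}\|_2 = O_P(1)$: writing $\tfrac{1}{n}\Ztbf\Kh\Zbf = A_n + E_n$ with $A_n := \E(K_h(X_i)Z_iZ_i^\top)$, one must restrict to the high-probability event $\{\|A_n^{-1}E_n\|_2 \leq \tfrac{1}{2}\}$, which needs $\|E_n\|_2 = o_P(1)$ uniformly in $n$. Once this step is in place, everything else reduces to bookkeeping of the $O_P$ and $o_P$ rates listed above.
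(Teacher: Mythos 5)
Your proposal is correct and follows essentially the same route as the paper: the same decomposition of $\Khfrac M\Khfrac$ into $\Kh$ minus a projection correction, the same entry-wise application of Lemma \ref{lemmaConvergence}, the reuse of the $O(h^2)$ bound on $\E(K_h(X_i)V_iZ_i^\top)$ from Lemma \ref{lemmaA13}, and the population first-order condition $\E(K_h(X_i)Z_ir_i(h))=0$ for the second assertion. The only difference is presentational: you make explicit the Neumann-series step behind $\|(\tfrac{1}{n}\Ztbf\Kh\Zbf)^{-1}\|_2=O_P(1)$, which the paper compresses into a one-line appeal to assumption \eqref{ZZTassumption} and $\sqrt{nh}\to\infty$.
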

\begin{proof}
The idea of the proof is to use Lemma \ref{lemmaConvergence} multiple times. Indeed, we can do this by calculating
\begin{align*}
\frac{1}{n}\Vbf^\top \Khfrac M\Khfrac\Vbf&=\frac{1}{n}\Vbf^\top \Khfrac\left(I-\Khfrac \Zbf(\Ztbf \Kh \Zbf)^{-1}\Ztbf \Khfrac\right)\Khfrac\Vbf\\
&=\frac{1}{n}\Vbf^\top\Kh\Vbf-\frac{1}{n}\Vbf^\top\Kh\Zbf(\Ztbf \Kh \Zbf)^{-1}\Ztbf\Kh\Vbf\\
&=\underbrace{\frac{1}{n}\Vbf^\top\Kh\Vbf}_{(1)}-\underbrace{\left(\frac{1}{n}\Vbf^\top\Kh\Zbf\right)\left(\frac{1}{n}\Ztbf \Kh \Zbf\right)^{-1}\left(\frac{1}{n}\Ztbf\Kh\Vbf\right)}_{(2)}.
\end{align*}
Concerning term $(2)$, we can now estimate each of the three factors individually. For the first factor we obtain
\begin{equation*}
    \frac{1}{n}\Vbf^\top\Kh\Zbf=\frac{1}{n}\sum_{i=1}^n K_h(X_i)V_iZ_i^\top,
\end{equation*}
leading to the fact that we can consider this expression entry-wise and, respectively, set $A_i$ of Lemma \ref{lemmaConvergence} to every entry of $V_iZ_i^\top$. The necessary assumption (\ref{assumpL61}) for that is implied by (\ref{cond1}) as we have for $m\in\{1,...,4\}$, $k\in\{1,...,p\}$ and $u(x):=(1, \mathds{1}(x\geq 0), x/h, x\mathds{1}(x\geq 0)/h)^\top$ that
\begin{align*}
    &\sup_{n\in\mathbb{N}}\sup_{x\in[-h,h]}\, \E\left(\left(V_i^{(m)}Z_i^{(k)}\right)^2\;\,\vrule\; X_i=x\right)\\
    =\;& \sup_{n\in\mathbb{N}}\sup_{x\in[-h,h]}\, u^{(m)}(x)^2\E\left(\left(Z_i^{(k)}\right)^2\;\,\vrule\; X_i=x\right)\\
    \leq\;& \sup_{n\in\mathbb{N}}\sup_{x\in[-h,h]}\, \E\left(\left(Z_i^{(k)}\right)^2\;\,\vrule\; X_i=x\right)<\infty.
\end{align*}
As result, we obtain
\begin{equation}\label{factorOne}
\frac{1}{n}\Vbf^\top\Kh\Zbf=\E(K_h(X_i)V_iZ_i^\top)+O_P\left(\frac{1}{\sqrt{nh}}\right).
\end{equation}
In an analogous way, the third factor results in
\begin{equation}\label{factorThree}
    \frac{1}{n}\Ztbf\Kh\Vbf=\E(K_h(X_i)Z_iV_i^\top)+O_P\left(\frac{1}{\sqrt{nh}}\right).
\end{equation}
For the second factor we have
\begin{equation}\label{factorTwo}
\begin{split}
    \left(\frac{1}{n}\Ztbf \Kh \Zbf\right)^{-1}&=\left(\E(K_h(X_i)Z_iZ_i^\top)+O_P\left(\frac{1}{\sqrt{nh}}\right)\right)^{-1}\\
    &\overset{(*)}{=}\E(K_h(X_i)Z_iZ_i^\top)^{-1}+O_P\left(\frac{1}{\sqrt{nh}}\right),
\end{split}
\end{equation}
whereby $(*)$ can be concluded using assumption (\ref{ZZTassumption}) and $\sqrt{nh}\to\infty$.

Overall, taking the equations (\ref{factorOne}), (\ref{factorThree}) and (\ref{factorTwo}) together, $(2)$ evaluates as follows:
\begin{align*}
&\left(\frac{1}{n}\Vbf^\top\Kh\Zbf\right)\left(\frac{1}{n}\Ztbf \Kh \Zbf\right)^{-1}\left(\frac{1}{n}\Ztbf\Kh\Vbf\right)\\
=&\;\underbrace{\E(K_h(X_i)V_iZ_i^\top)\E(K_h(X_i)Z_iZ_i^\top)^{-1}\E(K_h(X_i)Z_iV_i^\top)}_{(\triangle)}+o_P(1).
\end{align*}
Note that we used the fact that all the means are of order $O(1)$ because $\E(K_h(X_i)Z_iZ_i^\top)^{-1}=O(1)$ by assumption and $\E(K_h(X_i)Z_iV_i^\top)^\top=\E(K_h(X_i)V_iZ_i^\top)=O(1)$ by applying Lemma \ref{lemmaA12} as in (\ref{taylorOfVZ}). As we now use equation (\ref{Ohvier}) taken from Lemma \ref{lemmaA13}, which states that expression $(\triangle)$ is of order $O(h^4)$, we obtain
\begin{equation*}
\left(\frac{1}{n}\Vbf^\top\Kh\Zbf\right)\left(\frac{1}{n}\Ztbf \Kh \Zbf\right)^{-1}\left(\frac{1}{n}\Ztbf\Kh\Vbf\right)=O(h^4)+o_P(1)=o_P(1).
\end{equation*}

Now we work on term $(1)$. Also here, we apply Lemma \ref{lemmaConvergence} component-wise. It is clear that the necessary assumption (\ref{assumpL61}) is satisfied. Afterwards, we apply Lemma \ref{lemmaA15}, leading to
\begin{align*}
\frac{1}{n}\Vbf^\top\Kh\Vbf&=\E(K_h(X_i)V_iV_i^\top)+O_P\left(\frac{1}{\sqrt{nh}}\right)\\
&=f_X(0)\kappa(K)+o_P(1)+O_P\left(\frac{1}{\sqrt{nh}}\right)\\
&=f_X(0)\kappa(K)+o_P(1).
\end{align*}
Overall, the calculations for term $(1)$ and $(2)$ result in
\begin{equation*}
    \frac{1}{n}\Vbf^\top \Khfrac M\Khfrac\Vbf=f_X(0)\kappa(K)+o_P(1).
\end{equation*}
Now we have to show the second assertion of the lemma. We start in the same way by splitting up the expression:
\begin{align*}
\frac{1}{n}\Vbf^\top \Khfrac M\Khfrac \rbf(h)&=\frac{1}{n}\Vbf^\top \Khfrac\left(I-\Khfrac \Zbf(\Ztbf \Kh \Zbf)^{-1}\Ztbf \Khfrac\right)\Khfrac\rbf(h)\\
&=\frac{1}{n}\Vbf^\top\Kh\rbf(h)-\frac{1}{n}\Vbf^\top\Kh\Zbf(\Ztbf \Kh \Zbf)^{-1}\Ztbf\Kh\rbf(h)\\
&=\underbrace{\frac{1}{n}\Vbf^\top\Kh\rbf(h)}_{(1)}-\underbrace{\left(\frac{1}{n}\Vbf^\top\Kh\Zbf\right)\left(\frac{1}{n}\Ztbf \Kh \Zbf\right)^{-1}\left(\frac{1}{n}\Ztbf\Kh\rbf(h)\right)}_{(2)}.
\end{align*}
We do not have to examine expression $(1)$ since it appears in the expression of the assertion. We just have to show that term $(2)$ is of order $o_P\left(\frac{1}{\sqrt{nh}}\right)$.
We know that:
\begin{itemize}
    \item $\frac{1}{n}\Vbf^\top\Kh\Zbf=\E(K_h(X_i)V_iZ_i^\top)+O_P\left(\frac{1}{\sqrt{nh}}\right)=o(1)+O_P\left(\frac{1}{\sqrt{nh}}\right)=o_P(1)$ whereby the first equation follows from Lemma \ref{lemmaConvergence} and the second by $\E(K_h(X_i)V_iZ_i^\top)=o(1)$, which for example can be seen in the calculation of (\ref{taylorOfVZ}) or by applying Lemma \ref{lemmaA12}.
    \item $\left(\frac{1}{n}\Ztbf \Kh \Zbf\right)^{-1}=\E(K_h(X_i)Z_iZ_i^\top)^{-1}+O_P\left(\frac{1}{\sqrt{nh}}\right)=O_P(1)$ whereby we used (\ref{factorTwo}) and assumption (\ref{ZZTassumption}).
    \item $\frac{1}{n}\Ztbf\Kh\rbf(h)=\E(K_h(X_i)Z_ir_i(h))+O_P\left(\frac{1}{\sqrt{nh}}\right)=O_P\left(\frac{1}{\sqrt{nh}}\right)$ by applying Lemma \ref{lemmaConvergence} with $A_i$ set respectively to the components of $Z_ir_i(h)$ and the fact that
    \begin{equation*}
        \E(K_h(X_i)Z_ir_i(h))=0
    \end{equation*}
    due to the definition of the residual with $\theta_0(h)$ and $\gamma_0(h)$ being defined as minimizing arguments.
\end{itemize}
Overall, this shows that $(2)$ is of order $o_P\left(\frac{1}{\sqrt{nh}}\right)$, leading to the second assertion of the lemma being proved.
\end{proof}

\end{appendices}

\newpage
\bibliography{bibliography}

\end{document}